\numberwithin{equation}{section}
\def\C{{\mathbb{C}}}
\def\varep{\varepsilon}
\def\sgn{\mathrm{sgn}}
\def\R{{\mathbb R}}
\def\R{{\bf R}}
\def\Z{{\mathbb Z}}
\def\R{\mathbb{R}}
\def\T{{\mathbb T}}
\definecolor{bluegreen}{rgb}{0.0, 0.3, 0.9}
\newtheorem{theorem}{Theorem}[section]
\newtheorem{lemma}[theorem]{Lemma}
\newtheorem{remark}[theorem]{Remark}
\title{Critical well-posedness for the 2D Peskin problem with general tension}
\author{Eduardo Garc\'ia-Ju\'arez and Susanna V. Haziot}
\address{Departamento de An\'alisis Matem\'atico, Universidad de Sevilla, C/Tarfia s/n, Campus Reina Mercedes, 41012, Sevilla, Spain. \href{mailto:egarcia12@us.es}{egarcia12@us.es}}
\address{Department of Mathematics, Brown University, Kassar House,151 Thayer St. Providence, RI 02912, USA. \href{mailto:susanna_haziot@brown.edu}{susanna\_haziot@brown.edu}}
\begin{document}
	\maketitle
\begin{abstract}
    In this paper, we study the two dimensional Peskin problem with general elasticity law. Specifically, we prove global regularity for small perturbations, in suitable critical spaces, of the circle solution, possibly containing corners. For such initial data we prove asymptotic stability in the sense that as $t\to\infty$, the solution converges to a translated and rotated disk.
\end{abstract}
 
	\setcounter{tocdepth}{1}
	\tableofcontents
	\section{Introduction}
	The \textit{immersed boundary method} \cite{Peskin02} was developed by Charles Peskin to study the circulation of blood through the heart valves \cite{Peskin77}. One particular motivation was to understand the interactions between the flow of the fluid and boundary of the heart ventricles with the idea of furthering our comprehension of the mechanisms of valve disease. Since it takes into account the boundary of the heart and the flow of the blood as a single entity, the problem is categorized as an \textit{immersed boundary problem}. 
	
	Due to the extreme complexity of the human heart and its mechanism, in order to launch rigorous mathematical studies on immersed boundary problems, a simpler model was derived by \cite{MoriRodenbergSpirn19} and \cite{LinTong19}, now known as the \textit{Peskin problem}. In two dimensions, the problem consists of two incompressible Stokes fluids separated by a thin, closed, elastic boundary whose shape evolves in time, see Figure~\ref{fig:peskin}. The elasticity of the boundary is captured by a \textit{tension} term, which plays a significant role in the nature of the problem: when the tension is taken to be linear, the problem can be perceived as being semi-linear, whereas when it is nonlinear, the problem becomes fully quasi-linear. Since the problem is scaling invariant in given function spaces, it is said to be \textit{critical} in those given spaces.  
	
	In this paper, we provide a very robust description of the two dimensional problem when the viscosity in the two fluids is the same and allowing for any general, nonlinear tension. Ours is the first global well-posedness result for this problem in critical spaces for this setting, as well as the first for which the initial data allows for corners. We show that, if within a given norm, the initial data is a perturbation of the disk and can contain infinitely many small corners, then there exists a global in time solution for which the corners desingularize and, as time goes to infinity, converges exponentially to a translated circle.  
	
	\subsection{Presentation of the problem}
	We denote by $\Omega_{1,2}$ the two fluid regions in the $(x,y)$-plane, separated by the closed, elastic boundary $\Gamma$. We define by $u_i$ the velocity, $p_i$ the pressure and $\mu_i>0$ the viscosity in the region $\Omega_i$ for $i\in\{1,2\}$. Furthermore, we denote the full velocity field by $\textbf{u}:=(u_1,u_2)$. Since the model considers highly viscous, incompressible fluids, they are modeled by Stokes' flow. 
	\begin{figure}
		\centering
		\includegraphics[scale=0.7]{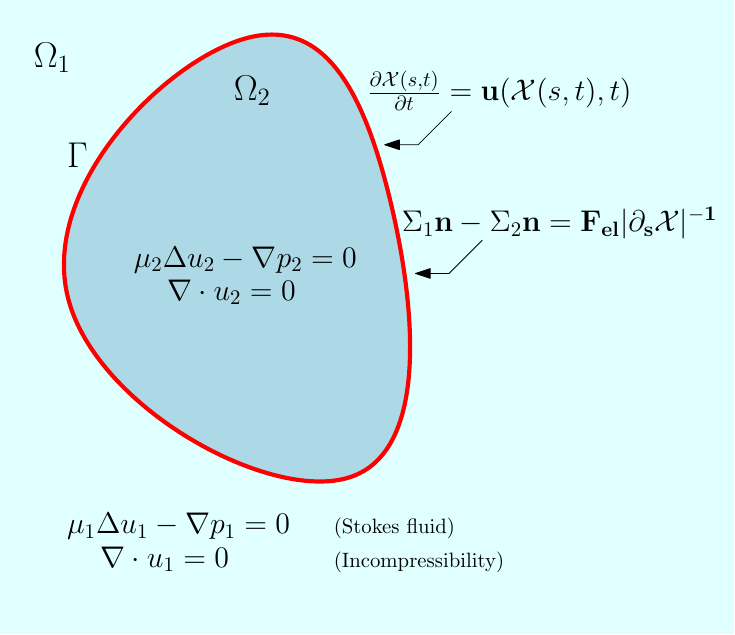}
		\caption{The two-dimensional Peskin problem}
		\label{fig:peskin}
	\end{figure}
	We get
	\begin{equation}\label{present1}
		\begin{aligned}
		\mu_i\Delta u_i-\nabla p_i&=0&\qquad&\text{ in }\Omega_i,\\
		\nabla\cdot u_i&=0&\qquad&\text{ in }\Omega_i,
		\end{aligned}
	\end{equation} 
	for $i\in\{1,2\}$. We must also take into account compatibility conditions on the free boundary $\Gamma$. We begin by parameterizing $\Gamma$ by the Lagrangian coordinate $s\in\mathbb{R}\setminus\{2\pi\mathbb{Z}\}$ and let $\mathcal{X}(s,t)$ denote the coordinate position of $\Gamma$ at time $t$. Denoting by 
	\begin{equation}\label{present2}
		\textbf{n}=-\frac{\partial_s\mathcal{X}}{|\partial_s\mathcal{X}|}
	\end{equation}
	the outward pointing unit normal vector on $\Gamma$, the equations in \eqref{present1} are coupled with the boundary conditions
	\begin{equation}\label{present3}
		\begin{aligned}
		u_1&=u_2&\qquad&\text{ on }\Gamma,\\
		\partial_t\mathcal{X}(s,t)&=\textbf{u}(\mathcal{X}(s,t),t)&\qquad&\text{on }\Gamma,
		\end{aligned}
	\end{equation}
	ensuring that the velocity field is continuous across the boundary and that $\Gamma$ moves with the fluids, and with the stress balance condition
	\begin{equation}\label{present4}
		\Sigma_1\textbf{n}-\Sigma_2\textbf{n}=F_\textup{el}|\partial_s\mathcal{X}|^{-1}\qquad\text{on }\Gamma.
	\end{equation}
	Here, $\Sigma:=(\Sigma_1,\Sigma_2)$ denotes the fluid stress, where, setting $I$ to be the identity matrix, each component is defined by
	\begin{equation}\label{present5}
		\Sigma_i=\mu_i\big(\nabla\textbf{u}+(\nabla\textbf{u})^T-p_iI \big)\qquad\text{in }\Omega_i
	\end{equation}
	and $F_\textup{el}$ denotes the elastic force exerted by $\Gamma$,
 \begin{equation*}
     \begin{aligned}
         F_{el}(s,t):=\partial_s\Big(\mathcal{T}(|\partial_s\mathcal{X}|)\frac{\partial_s\mathcal{X}(s)}{|\partial_s\mathcal{X}(s)|}\Big).
     \end{aligned}
 \end{equation*}
 The tension $\mathcal{T}$ satisfies the structural conditions
 \begin{equation}\label{Tau_cond}
     \mathcal{T}(r)>0,\quad \mathcal{T}'(r)>0.
 \end{equation}
 Physically, \eqref{present4} captures the fact that the force exerted by the viscosity and pressures in the fluids must be balanced by the force resulting from the elasticity of the boundary. To complete the formulation of the problem, we prescribe that both the velocity and the pressure tend to zero in the far-field. Finally, we will restrict ourselves to the setting in which the two viscosities are equal, that is
	\begin{equation}\label{present6}
		\mu_1=\mu_2.
	\end{equation}
	For the rest of the paper, we will take treat $\mathcal{X}=\mathcal{X}(s,t)$ as a complex-valued function in the sense that $\mathcal{X}=\mathcal{X}_1+i\mathcal{X}_2$ for $\mathcal{X}_1,\mathcal{X}_2\in\R$. The problem \eqref{present1}-\eqref{present6} can hence be reformulated as the evolution equation
	\begin{equation}\label{present7}
	\begin{split}
	\partial_t\mathcal{X}(s,t)&=\mathcal{N}(s,t),\\\mathcal{X}_0(s)&=\mathcal{X}(s,0),
	\end{split}
	\end{equation}
	where we define
	\begin{equation}\label{present8}
		\mathcal{N}(s,t):=\frac{1}{4\pi}\mathrm{p.v.} \int_{\T}\operatorname{Re}\bigg[\frac{\mathcal{X}'(r,t)^2}{(\mathcal{X}(r,t)-\mathcal{X}(s,t))^2}\bigg](\mathcal{X}(r,t)-\mathcal{X}(s,t))T(|\mathcal{X}'(r,t)|)\,dr,
	\end{equation}
	for $\mathcal{X}:\T\to\C$ taken to be a $C^2$ function and $T:\R\to\R$, capturing the tension along the elastic boundary, 
 to be a smooth function away from the origin,
  \begin{equation}\label{T_def}
     \begin{aligned}
         T(r):=\frac{\mathcal{T}(r)}{r}.
     \end{aligned}
 \end{equation}
	
	The two-dimensional Peskin problem is scaling invariant in the sense that if $\mathcal{X}(s,t)$ is a solution to \eqref{present7}, then so is $\lambda^{-1}\mathcal{X}(\lambda s,\lambda t)$ for any $\lambda>0$. The spaces $\dot{H}^{3/2}$ and $\dot{W}^{1,\infty}$ are such \textit{critical spaces} in which this scaling invariance holds. Hence, in order to construct solutions whose initial data has corners, we are forced to work in the critical space $\dot{W}^{1,\infty}$.  
	
	\subsection{Statement of the main results}  The main result of the paper will be to prove global well-posedness of the initial-value problem \eqref{present7} in function spaces for which the initial data allows for small corners. To this end, we introduce the following spaces: for the initial data we assume that $\|f\|_{S}\leq\varepsilon_0$, where
	\begin{equation}\label{norm1}
	\|f\|_{S}:=\|f'\|_{L^\infty}+\sup_{n\in\Z_+}2^{3n/2}\|P_nf\|_{L^2},
	\end{equation}
	with $P_n$ the usual Littlewood-Paley projections (defined by Fourier multipliers $\xi\to\varphi_n(\xi)$, see for example \cite{BahouriCheminDanchin11}). Notice that, in terms of Besov spaces, $S=\dot{W}^{1,\infty}\cap \dot{B}^{\frac32}_{2,\infty}$, and this space allows for multiple corners. Then we define the space $Z_1$ as (essentially) the space of linear evolutions with initial data in $S$, induced by the norm
	\begin{equation}\label{norm2}
	\|F\|_{Z_1}:=\sup_{t\in[0,\infty)}(1+t)^{2/3}\|F'(t)\|_{L^\infty_x}+\sup_{t\in[0,\infty)}\sup_{n\in\Z_+}(1+2^nt)^{2/3}2^{3n/2}\|P_nF(t)\|_{L^2}.
	\end{equation}
	Finally we define the space $Z_2$ where we plan on measuring the nonlinearity by the norm
	\begin{equation}\label{norm3}
	\|F\|_{Z_2}:=\sup_{t\in[0,\infty)}\sup_{n\in\Z_+}(2^nt)^{-1/3}(1+2^nt)2^{3n/2}\|P_nF(t)\|_{L^2}.
	\end{equation}
	Notice that $Z_2$ is a stronger norm than $Z_1$, i.e. $\|F\|_{Z_1}\lesssim \|F\|_{Z_2}$ for any function $F$. 
	
	We are now in a position to state the main theorem.
	\begin{theorem}
		Let $\varep\ll1$ and let $\mathcal{X}_0$ be such that
		\begin{equation*}
			\|\mathcal{X}_0(s)-e^{is}\|_{S}\leq\varep,
		\end{equation*}
		where the space $S$ is defined as in \eqref{norm1}. Then there exists a unique solution $\mathcal{X}:[0,\infty)\times\T\to\C$ to the initial value problem \eqref{present7} of the form $$\mathcal{X}(s,t)=a_0(t)+a_1(t)e^{is}+\mathcal{Y}(s,t),$$ with $\|\mathcal{Y}\|_{Z_1}\leq\varep$, which converges exponentially to the translated circle $\widetilde{a_0}+\widetilde{a_1}e^{is}$. Here,
		\begin{equation*}
			\widetilde{a_0}:=\lim_{t\to\infty}a_0(t)\qquad \text{and}\qquad \widetilde{a_1}:=\lim_{t\to\infty}a_1(t).
		\end{equation*}  
	\end{theorem}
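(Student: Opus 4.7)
First I would set up the modulation decomposition
\[
\mathcal{X}(s,t) = a_0(t) + a_1(t)\,e^{is} + \mathcal{Y}(s,t),
\]
in which $a_0(t)$ and $a_1(t)$ absorb the zero and first Fourier modes of $\mathcal{X}(\cdot,t)-e^{is}$, forcing $\widehat{\mathcal{Y}}(0,t)=\widehat{\mathcal{Y}}(1,t)=0$. These two modes correspond to the translation, rotation, and scaling symmetries of the family of circular equilibria and lie in the kernel of the linearized operator; only on their spectral complement can one expect $L^2$-type decay. Substituting this ansatz into \eqref{present7} and expanding \eqref{present8} around $\mathcal{X}=e^{is}$ yields an ODE system for $(a_0,a_1)$ with right-hand side quadratic in $\mathcal{Y}$, coupled to an evolution
\[
\partial_t \mathcal{Y} + L\mathcal{Y} = \mathcal{R}[\mathcal{Y}],
\]
where $L$ is the Fourier multiplier obtained by differentiating $\mathcal{N}$ at the circle and $\mathcal{R}$ collects all higher-order terms. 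Using \eqref{Tau_cond}, the symbol of $L$ behaves like $c\,|n|$ on modes $|n|\ge 2$, with $c=c(T(1),T'(1))>0$, exactly as in the linear-tension Peskin problem.

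\textbf{Linear estimates and design of the norms.} Next I would prove sharp decay bounds for the semigroup $e^{-tL}$, showing $\|e^{-tL}\mathcal{Y}_0\|_{Z_1}\lesssim \|\mathcal{Y}_0\|_S$. This rests on combining the frequency-wise exponential decay $e^{-ct\,2^n}$ with the Bernstein embedding $\|P_n\,\cdot\,\|_{L^\infty}\lesssim 2^{n/2}\|P_n\,\cdot\,\|_{L^2}$ and the critical uniform $\dot{W}^{1,\infty}$ control at low frequencies. The weights in \eqref{norm2} and \eqref{norm3} are then tuned so that the Duhamel operator $\mathcal{F}\mapsto \int_0^t e^{-(t-\tau)L}\mathcal{F}(\tau)\,d\tau$ is bounded from $Z_2$ to $Z_1$; in particular the $(2^n t)^{-1/3}$ factor in $Z_2$ absorbs the short-time integrability failure at each dyadic frequency while the $(1+2^n t)^{2/3}$ weight propagates the correct long-time decay.

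\textbf{Nonlinear estimates.} The heart of the proof is the multilinear bound
\[
\|\mathcal{R}[\mathcal{Y}]\|_{Z_2}\lesssim \|\mathcal{Y}\|_{Z_1}^2 + \|\mathcal{Y}\|_{Z_1}^3 + \cdots,
\]
for which I would exploit the precise structure of \eqref{present8}. Writing
\[
\mathcal{X}(r)-\mathcal{X}(s) = a_1\bigl(e^{ir}-e^{is}\bigr) + \bigl(\mathcal{Y}(r)-\mathcal{Y}(s)\bigr)
\]
and Taylor-expanding $T(|\mathcal{X}'|)$ around $T(|a_1|)$, the linear contribution reconstructs $-L\mathcal{Y}$ and the remainder splits into a finite collection of multilinear singular integrals, each at least quadratic in $\mathcal{Y}$. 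At critical regularity each such remainder must either gain a derivative, via the $\operatorname{Re}[\cdot]$-cancellation in \eqref{present8} (which kills the leading chord singularity) or via identities exchanging $\mathcal{X}'(r)^2-\mathcal{X}'(s)^2$ for $\mathcal{Y}'$-differences, or trade frequency for the time weight built into $Z_2$ through the $(2^n t)^{-1/3}$ factor. The general nonlinear tension introduces variable-coefficient commutators which I would handle via a Bony paraproduct expansion around $T(|a_1|)$ and $T'(|a_1|)$, reducing them to constant-coefficient paradifferential operators plus admissible remainders.

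\textbf{Fixed point, asymptotics, and main obstacle.} With linear and nonlinear estimates in hand, the Duhamel formula
\[
\mathcal{Y}(t) = e^{-tL}\mathcal{Y}_0 + \int_0^t e^{-(t-\tau)L}\mathcal{R}[\mathcal{Y}](\tau)\,d\tau
\]
becomes a contraction on the ball $\{\|\mathcal{Y}\|_{Z_1}\le \varep\}$ for $\varep$ small, yielding the global unique solution. The modulation ODEs for $(a_0,a_1)$ then have right-hand side integrable in $t$ by the $Z_1$ decay of $\mathcal{Y}$, producing the limits $\widetilde{a_0}$ and $\widetilde{a_1}$. To upgrade the polynomial $Z_1$ rate for $\mathcal{Y}$ to the claimed exponential convergence, I would argue by parabolic bootstrap: instantaneous smoothing places $\mathcal{Y}(t)$ in a subcritical Sobolev space for any $t>0$ (so the corners desingularize), and on the spectral complement of the modulation modes $L$ has a genuine spectral gap, which upgrades polynomial to exponential decay in a higher-order norm. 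The principal technical obstacle I expect is the critical nonlinear estimate in $Z_2$ with general tension: gaining a derivative or a time weight in every multilinear remainder while only controlling $\mathcal{Y}$ at the critical scale, and simultaneously handling the variable-coefficient structure induced by nonlinear $T$, is what distinguishes this result from the linear-tension or subcritical settings.
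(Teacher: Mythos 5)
Your overall skeleton — modulation decomposition with $\widehat{\mathcal{Y}}(0)=\widehat{\mathcal{Y}}(1)=0$, Duhamel contraction in the weighted spaces $Z_1,Z_2$, critical multilinear bounds using cancellation in $\mathcal{N}$, and an integrable right-hand side for the $(a_0,a_1)$ ODEs — matches the paper's strategy. But there are two substantive gaps.

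First, the linearized operator is \emph{not} a scalar Fourier multiplier with symbol $\sim c|n|$, as you assert. When one computes the linearization of $\mathcal{N}$ around the circle with the general tension $T$, the outcome (formulas \eqref{freq12}–\eqref{freq13} in the paper) is that mode $k$ couples to mode $2-k$, and the linearized system decouples only into $2\times2$ blocks pairing $a_k(t)$ with $\overline{a_{2-k}}(t)$; see \eqref{freq16}–\eqref{freq18}. The operator is a matrix, not a multiplier. The decay statement you want — behaviour like $-|\nabla|$ — is then a \emph{consequence} of the fact that each Hermitian block $\mathcal{G}$ has both eigenvalues $\lambda_1=2A_0(m-1)$ and $\lambda_2=2(A_0+\widetilde B_0)(m-1)$ positive and proportional to $|m|$, which in turn relies on the structural condition \eqref{Tau_cond} via \eqref{cond2}. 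Without identifying this block structure you cannot write down the correct Duhamel propagator (it is $e^{t\mathcal{G}}$ block by block, cf. \eqref{eq1}), nor can you justify freezing the time-dependent coefficients $A(a_1(t)),B(a_1(t))$ at their initial values and moving the differences $A(t)-A_0$, $B(t)-B_0$ into the perturbative part $\widetilde{\mathcal{L}}_m$, which the paper does and which your sketch of the fixed point does not account for.

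Second, the mechanism for the critical nonlinear estimate in $Z_2$ is not primarily the $\operatorname{Re}[\cdot]$ cancellation you invoke. Because the problem is posed on the torus, the paper reformulates $\mathcal{N}$ as a multilinear paraproduct with kernels $L_n(s,\alpha)$ built from the Littlewood–Paley projections (equations \eqref{Ldef}, \eqref{Ln_psi}), and then applies the Poisson summation formula to replace the discrete Fourier kernel by a genuinely integrable Schwartz profile $\psi_n$. This substitutes for the integration by parts one would use on $\mathbb{R}$ and is essential for the bounds in Lemmas~\ref{lem:kernel_1}–\ref{lem:kernel2}, which are what close the fixed point at the critical level. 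Relatedly, the logarithmic loss in $\alpha$ near the singularity is removed by introducing the corrected kernel $\widetilde{L}_n$ in \eqref{Ltilde} and then exploiting that the subtracted term is odd in $\alpha$ and integrates to zero, not by a chord-identity as you suggest.

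Finally, on the exponential convergence: in the paper this is not obtained by a parabolic bootstrap into a higher-order norm; it comes directly from the exponential factors $e^{t\mathcal{G}}$ in the mode-by-mode Duhamel formula, together with the integrability of the right-hand side for the modes $m=0,1$ in \eqref{first_freq}. Your bootstrap idea is plausible but is an additional argument that the paper does not need.
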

	\begin{figure}
		\centering
		\includegraphics[scale=0.7]{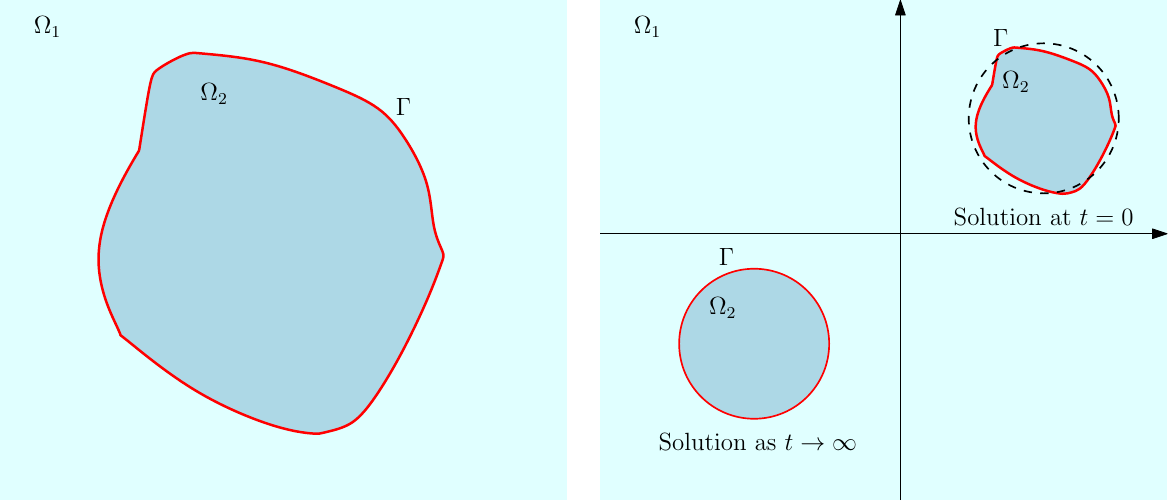}
		\caption{The initial data with small corners and its evolution as $t\to\infty$}
		\label{fig:initial}
	\end{figure}
	\begin{remark}
		Informal stated, if the initial data consisting of small corners is a small perturbation of the disk, then there exists a global solution for which the corners desingularize and which converges to a translated disk as time goes to infinity, see Figure~\ref{fig:initial}. 
	\end{remark}
	\begin{remark}
		Although our problem is quasi-linear, we use a fixed point argument with Duhamel's formula in order to construct the solution. This is possible due to the a certain parabolic characteristic of the problem: the linear evolution does not contain a clear-cut multiplier (such as $-|\nabla|$) to capture the dissiptation, but instead, since the various Fourier modes interact, we have a matrix which can be shown acts analytically in the same way as $-|\nabla|$. Moreover, the first two modes, $a_0$ and $a_1$ are not dissipative, and hence must be removed before performing the fixed point argument. This is what captures the \textit{orbital stability}.
	\end{remark}
	\begin{remark}
		The $\mathcal{Y}$ portion of the solution consists of a linear flow which lies in the $Z_1$ space and a nonlinear integral term due to Duhamel's formula which lies in the $Z_2$ space. Since the $Z_1$ space is a larger space than the $Z_2$ space, we say that $\mathcal{Y}$ lies in $Z_1$.
	\end{remark}
	We are also able to prove a similar result in the Wiener algebra space $W$, defined by the norm 
	\begin{equation}\label{W_norm}
	\|F\|_W:=\sup_{t\in[0,\infty)}\{\sum_{k\in\Z}|\widehat{F}(k)||k|\}+\sup_{t\in[0,\infty),\,m\in\Z_+}\{\sum_{|k|\in [2^{m-1},2^{m+1}]}|\widehat{F}(k)||k|(1+|k|t)^{2/3}\}
	\end{equation}
	with corresponding $N$ norm for the Fourier coefficients induced by
	\begin{equation}\label{N_2_norm}
	\|\{a_k\}_{k\in\Z}\|_N:=\sup_{t\in[0,\infty)}\{\sum_{k\in\Z}|a_k|\}+\sup_{t\in[0,\infty),\,m\in\Z_+}\{\sum_{|k|\in[2^{m-1},2^{m+1}]}|a_k|(1+|k| t)^{2/3} \}.
	\end{equation}
	We get the following theorem.
	\begin{theorem}
		Let $\varep\ll1$ and let $\mathcal{X}_0$ be such that
		\begin{equation*}
		\|\mathcal{X}_0(s)-e^{is}\|_{W}\leq\varep,
		\end{equation*}
		where the space $W$ is defined as in \eqref{W_norm}. Then there exists a unique solution $\mathcal{X}:[0,\infty)\times\T\to\C$ to the initial value problem \eqref{present7} of the form $$\mathcal{X}(s,t)=a_0(t)+a_1(t)e^{is}+\mathcal{Y}(s,t),$$ with $\|\mathcal{Y}\|_{W}\leq\varep$, which converges exponentially to the translated circle $\widetilde{a_0}+\widetilde{a_1}e^{is}$. Here,
		\begin{equation*}
		\widetilde{a_0}:=\lim_{t\to\infty}a_0(t)\qquad \text{and}\qquad \widetilde{a_1}:=\lim_{t\to\infty}a_1(t).
		\end{equation*}  
	\end{theorem}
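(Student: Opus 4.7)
The plan is to follow the same blueprint as for Theorem~1 (perturbation around $e^{is}$, extraction of the non-dissipative modes, Duhamel plus contraction against the quasilinear nonlinearity), with the Littlewood--Paley spaces replaced by the Wiener algebra. The key advantage of $W$ is that it is a Banach algebra with built-in parabolic time-weight $(1+|k|t)^{2/3}$, so bilinear and multilinear estimates reduce to discrete convolution inequalities on the Fourier side.

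First I would write $\mathcal{X}(s,t)=a_0(t)+a_1(t)e^{is}+\mathcal{Y}(s,t)$ with $\widehat{\mathcal{Y}}(0,t)=\widehat{\mathcal{Y}}(\pm1,t)=0$, separating out the two non-dissipative modes. Plugging this ansatz into \eqref{present7} and expanding around the circle $e^{is}$ yields an ODE system for $(a_0,a_1)$ driven by the projections of the nonlinearity onto the $k=0,\pm1$ modes, and an evolution equation $\partial_t \mathcal{Y}=\mathcal{L}\mathcal{Y}+\mathcal{R}(a_0,a_1,\mathcal{Y})$, where $\mathcal{L}$ is the linearization of $\mathcal{N}$ at $e^{is}$. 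A direct Fourier computation, with $T(1)$ and $T'(1)$ as the effective coefficients, shows that $\mathcal{L}$ acts on each pair $\{e^{iks},e^{-iks}\}$ with $|k|\ge 2$ as a $2\times 2$ matrix whose eigenvalues are $-c_\pm |k|$, $c_\pm>0$; this is the matrix incarnation of the $-|\nabla|$ behaviour mentioned in the earlier remark, and it produces a semigroup $e^{t\mathcal{L}}$ that gains a factor $e^{-c|k|t}$ on the $k$-th Fourier mode.

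Next, I would set up the Duhamel map
\begin{equation*}
\Phi(\mathcal{Y})(t)=e^{t\mathcal{L}}\mathcal{Y}_0+\int_0^t e^{(t-\tau)\mathcal{L}}\mathcal{R}\bigl(a_0(\tau),a_1(\tau),\mathcal{Y}(\tau)\bigr)\,d\tau
\end{equation*}
on the ball of radius $C\varep$ in $W$. The linear estimate $\|e^{t\mathcal{L}}g\|_W\lesssim \|g\|_W$ is immediate from the mode-by-mode decay. For the nonlinear piece I would expand $\mathcal{N}$ in powers of $\mathcal{Y}/e^{is}$; each term has the schematic form $\sum_{k_1+\cdots+k_\ell=k} m(k_1,\ldots,k_\ell)\,\widehat{\mathcal{Y}}(k_1)\cdots\widehat{\mathcal{Y}}(k_\ell)$ with smooth multipliers coming from $T$ and from the denominator $(\mathcal{X}(r)-\mathcal{X}(s))^2$. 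Since $W$ is an algebra, these multilinear expressions are controlled by convolutions of the sequences in \eqref{N_2_norm}, and the weight $(1+|k|t)^{2/3}$ is propagated via the sublinearity bound $(1+|k|t)^{2/3}\lesssim \sum_j (1+|k_j|t)^{2/3}$. A standard contraction then produces a unique $\mathcal{Y}$ with $\|\mathcal{Y}\|_W\le \varep$.

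The hardest point, as in the Besov setting, is extracting the true quasilinear structure: the denominator $(\mathcal{X}(r,t)-\mathcal{X}(s,t))^2$ has to be inverted, which requires a chord-arc lower bound $|\mathcal{X}(r)-\mathcal{X}(s)|\gtrsim |e^{ir}-e^{is}|$ propagated along the flow, and the tension $T(|\mathcal{X}'|)$ must be expanded around $T(1)$ in a way that keeps the leading dissipative part exact and relegates the remainder to a multilinear piece controlled by $N$. Once the fixed point is secured, the built-in time weights force the projections of the nonlinearity onto the $k=0,\pm 1$ modes to be integrable in $\tau$, which yields the limits $\widetilde{a}_0,\widetilde{a}_1$; reinjecting the factor $e^{-c|k|t}$ from $e^{t\mathcal{L}}$ into the high-mode Duhamel formula then upgrades the convergence $\mathcal{X}(\cdot,t)\to \widetilde{a}_0+\widetilde{a}_1 e^{is}$ to the exponential rate asserted in the theorem.
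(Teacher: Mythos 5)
Your proposal follows the same blueprint as the paper's Sections 2--4: perturbation $\mathcal{X}=e^{is}+a_0+a_1e^{is}+Y$ with the two non-dissipative modes removed, a linearization giving $2\times2$ systems whose eigenvalues scale like $|k|$, multilinear estimates in $W$ via discrete convolutions (the paper's Lemma~\ref{lem:freq0} and its consequences), and a mode-by-mode Duhamel/contraction argument with the parabolic weight $(1+|k|t)^{2/3}$. One small but structurally important correction: the linearized matrices couple $a_k$ with $\overline{a_{2-k}}$, not with $a_{-k}$, and the non-dissipative modes are $k=0,1$ (not $k=0,\pm1$) --- the $2-k$ shift comes from the $e^{2is}$ factor that appears when expanding the Peskin kernel around $e^{is}$, and is what makes $a_2$ decouple and $a_0,a_1$ be exactly the kernel modes (equations \eqref{freq13}--\eqref{freq18}).
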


	\subsection{Historical considerations}

The breakthrough results of Lin and Tong \cite{LinTong19} and Mori, Rodenberg and Spirn \cite{MoriRodenbergSpirn19} initiated the rigorous study of the Peskin problem. In both works, the authors consider the case of linear elasticity, which results in a \textit{semilinear} system of equations for the evolution of the elastic filament. Lin and Tong used energy estimates in combination with the Schauder fixed point theorem to show local well-posedness in $H^{5/2}$, as well as exponential convergence to the equilibrium for initial data sufficiently close to the steady states. Via a semigroup approach, Mori, Rodenberg and Spirn were able to obtain local well-posedness in barely subcritical spaces on the little H\"older scale, $h^{1,\gamma}$, $\gamma\in(0,1)$, (i.e., the completion of smooth functions in classical H\"older spaces), instant smoothing of the free boundary and found the exact exponential rate of convergence towards the equilibrium, together with a criterion for global-in-time existence. 

After these results, the main research effort has been placed on lowering the regularity required for the initial data. In the semilinear setting,  García-Juárez, Mori and Strain \cite{GJMoriStrain23} showed global well-posedness for initial data close to equilibrium in a critical setting, namely with the tangent vector pertaining to the Wiener algebra (thus bounded and continuous). Then, Gancedo, Granero and Scrobogna \cite{GancedoGraneroScrobogna21}, introduced a toy model to capture the behavior of the normal component, for which global weak solutions were constructed in the Lipschitz class for sufficiently small initial data. Still in the semilinear setting, Chen and Nguyen 
\cite{ChenNguyen23} reached global well-posedness in the Besov space $\dot{B}^1_{\infty, \infty}$, which is strictly bigger than the Lipschitz class. The authors also proved local well-posedness for arbitrary initial curves, but required a smoother space: the closure of $C^2$ in $\dot{B}^1_{\infty, \infty}$. The proof consists of a fixed point scheme in customized spaces which roughly behave like the Chemin-Lerner space $\tilde{L}_T^{1/\alpha}(\dot{B}_{\infty,\infty}^{1,\alpha})$.

In a different direction, later, Tong \cite{Tong22} considered another simplified model: an infinitely long string that moves only due to tangential stretching. Interestingly, global weak solutions are constructed in the energy class, i.e., $\mathcal{X}\in H^1$, without smallness conditions. Building upon this work, Tong and Wei \cite{TongWei23} have very recently proved a global well-posedness result for the Peskin problem in $C^{1,\gamma}$, where only smallness in the normal deviation from the steady states is needed. Among other related works, Tong 
\cite{Tong21} studied, for computational purposes, a regularized Peskin problem and its convergence to the actual one, Li \cite{Li21} considered the effect of bending and stretching, and  García-Juárez, Kuo, Mori and Strain \cite{GJKuoMoriStrain23} initiated the study of the 3D Peskin problem.

Results for the setting with nonlinear elasticity are much more scarce. In the subcritical $h^{1,\gamma}$ class, Rodenberg \cite{Rodenberg18} proved local well-posedness based on abstract semigroup theory. More recently, the remarkable work by Cameron and Strain \cite{CameronStrain23} showed local well-posedness and smoothing of the solution in the critical Besov space $\dot{B}_{2,1}^{3/2}$. However, as for the Wiener algebra, functions in $\dot{B}_{2,1}^{1/2}$ are continuous and hence this class does not allow for interfaces with corners. None of the previous works studies the global well-posedness and stability near steady states.
When the tension law is linear, the semilinear system has a constant-coefficient parabolic leading term, which is basically the square-root of the Laplacian. Together with the extra regularity enjoyed by the lower-order remaining nonlinear terms, this fact makes it possible to obtain well-posedness even for initial interfaces with unbounded tangent vectors \cite{ChenNguyen23}.
This is no longer true for a nonlinear tension law: the system becomes quasilinear and the dissipative term can become degenerate if the tangent vector is not uniformly bounded.

As a matter of fact, this lack of results in the nonlinear, critical setting can be compared with the state-of-the-art in the heavily-studied Muskat problem. The Muskat problem is a fluid-interface problem, closely related in its mathematical structure to the Peskin problem, where the evolution equation for the free boundary is also quasilinear. The latest results on the Cauchy problem for the Muskat equation by Alazard and Nguyen \cite{AlazardNguyen21}, \cite{AlazardNguyen22}, whose techniques inspired the Peskin work by Cameron and Strain \cite{CameronStrain23}, provided critical local and global well-posedness in $W^{1,\infty}\cap H^{3/2}$ and solely $H^{3/2}$, respectively. Very recently, Gómez-Serrano, Pausader and the two authors \cite{GJGSHP23} showed global well-posedness and smoothing in critical spaces which include interfaces with corners. 

We notice that the equilibrium state for the Muskat equation in these works corresponds to the zero function (i.e. a flat interface), while in Peskin the steady states form a four-dimensional vector space (the uniformly parametrized circles). The existence of zero eigenvalues makes the analysis more involved, and in fact the closest setting for the Muskat problem would be that of bubble, for which only subcritical results are available \cite{GancedoGarciaJuarezPatelStrain23}.

	\subsection{Outline}
In Section~\ref{sect:linear}, we begin by studying the linear problem. After linearizing the solution around the disk, we study the output Fourier modes. We make two observations: firstly, we have no zeroth or first order modes in the output. These two modes don't decay at infinity, but instead, converge to some different value, capturing the motion of the equilibrium stead as $t\to\infty$. As a result, we can only treat the problem as a parabolic problem after we remove these two modes. Moreover, we notice that the remaining modes interact. We then show in Section~\ref{sect:duhamel} that the resulting matrix analytically acts like $|\nabla|$. 

In Section~\ref{sect:wiener}, as a warm-up result, we prove well-posedness of the problem in the Wiener algebra space and subsequently carry out the fixed point argument using an iteration of Duhamel's formula in Section~\ref{sect:duhamel}, treating the problem one mode at a time. The remainder of the paper concerns well-posedness in spaces allowing for initial data with small corners. In Section~\ref{sect:corners}, we reformulate the nonlinearity as a multilinear paraproduct. Since we work on the torus, the study of the kernels requires the use of the Poisson summation formula in order to handle the absence of integration by parts one would otherwise use for the real line in order to get the necessary bounds. Section~\ref{sect:estimates} then acts as a collection of estimates which will be fundamental in bounding the nonlinearity. In fact, this section contains the heart of the proof: by means of these estimates,  we are able to show in Section~\ref{sect:final} that analytically, the nonlinearity can be percieved as a Hilbert transform multiplied by a number of copies of $Y'$, allowing us to get the necessary bounds in order to close the fixed point argument.  	
	  
	\section{Study of the linear problem}\label{sect:linear}
	\subsection{Preliminaries}
	
	We recall the evolution problem we consider is of the following form:
	\begin{equation}\label{intro1}
	\begin{split}
	&\partial_t\mathcal{X}=\mathcal{N},\\
	&\mathcal{N}(s):=\frac{1}{4\pi}\mathrm{p.v.} \int_{\T}\operatorname{Re}\Big[\frac{\mathcal{X}'(r)^2}{(\mathcal{X}(r)-\mathcal{X}(s))^2}\Big](\mathcal{X}(r)-\mathcal{X}(s))T(|\mathcal{X}'(r)|)\,dr,
	\end{split}
	\end{equation}
	where $\mathcal{X}:\T\to\C$ is a $C^2$ function and $T:\R\to\R$ is a smooth function. We are looking for solutions $\mathcal{X}$ of the form
	\begin{equation}\label{intro2}
	\mathcal{X}(r)=e^{ir}+X(r),\qquad X:\T\to\C.
	\end{equation}
	
	Then we calculate
	\begin{equation}\label{intro3}
	\mathcal{X}'(r)=ie^{ir}+X'(r)=ie^{ir}(1-ie^{-ir}X'(r)),
	\end{equation}
	\begin{equation}\label{intro4}
	\mathcal{X}(r)-\mathcal{X}(s)=e^{i(r+s)/2}[e^{-i(s-r)/2}-e^{i(s-r)/2}+e^{-i(r+s)/2}(X(r)-X(s))],
	\end{equation}
	and
	\begin{equation}\label{intro5}
	\frac{\mathcal{X}'(r)^2}{(\mathcal{X}(r)-\mathcal{X}(s))^2}=\frac{e^{-i(s-r)}[1-ie^{-ir}X'(r)]^2}{[2\sin((s-r)/2)+ie^{-i(r+s)/2}(X(r)-X(s))]^2}.
	\end{equation}
	It will be convenient to let
	\begin{equation}\label{intro6}
	\widetilde{X}(s,r):=\frac{e^{-i(s+r)/2}(X(r)-X(s))}{2\sin((s-r)/2)},
	\end{equation}
	and hence, the last two formulas become
	\begin{equation}\label{intro7}
	\begin{split}
	&\mathcal{X}(r)-\mathcal{X}(s)=(-2)ie^{i(r+s)/2}\sin((s-r)/2)(1+i\widetilde{X}(s,r)),\\
	&\frac{\mathcal{X}'(r)^2}{(\mathcal{X}(r)-\mathcal{X}(s))^2}=\frac{e^{-i(s-r)}(1-ie^{-ir}X'(r))^2}{4\sin^2((s-r)/2)(1+i\widetilde{X}(s,r))^2}.
	\end{split}
	\end{equation}
	
	We can thus reformulate the function $\mathcal{N}$ defined in \eqref{intro1} as
	\begin{equation}\label{intro8}
	\begin{split}
	\mathcal{N}(s):=\frac{-i}{4\pi}\mathrm{p.v.} \int_{\T}&\Re\Big[\frac{e^{-i(s-r)}(1-ie^{-ir}X'(r))^2}{(1+i\widetilde{X}(s,r))^2}\Big]\frac{e^{i(s+r)/2}(1+i\widetilde{X}(s,r))}{2\sin((s-r)/2)}T(|1-ie^{-ir}X'(r)|)\,dr.
	\end{split}
	\end{equation}

	The following lemma will be very useful for studying the linear part of the equation.
	\begin{lemma}\label{introLemma}
		For any $k\in\Z$ we have
		\begin{equation}\label{intro11}
		I_k:=\frac{1}{2\pi}\mathrm{p.v.} \int_{\T}\frac{e^{-i\alpha/2}}{2\sin(\alpha/2)}e^{-ik\alpha}\,d\alpha=
		\begin{cases}
		-i/2\qquad&\text{ if }k\geq 0,\\
		i/2\qquad&\text{ if }k\leq -1.
		\end{cases}
		=:\frac{-i}{2}\sgn(k),
		\end{equation}
		and
		\begin{equation}\label{intro12}
		J_k:=\frac{1}{2\pi}\mathrm{p.v.} \int_{\T}\frac{e^{-ik\alpha}-1}{4\sin(\alpha/2)^2}\,d\alpha=-|k|/2.
		\end{equation}
	\end{lemma}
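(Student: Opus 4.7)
My plan is to handle both identities by first computing the Fourier coefficients of the periodic Hilbert kernel $\cot(\alpha/2)$ in the principal-value sense, and then deducing the two stated integrals from this single input. For \eqref{intro11} I would start with the elementary decomposition
$$\frac{e^{-i\alpha/2}}{2\sin(\alpha/2)} \;=\; \frac{1}{2}\cot(\alpha/2) \;-\; \frac{i}{2},$$
which splits $I_k$ into one half of the $k$-th Fourier coefficient of $\cot(\alpha/2)$ plus a Kronecker $\delta_{k,0}$ contribution of $-i/2$. For the Fourier coefficient I would invoke the classical conjugate Dirichlet identity
$$2\sum_{k=1}^{N}\sin(k\alpha) \;=\; \frac{\cos(\alpha/2) - \cos((N+\tfrac12)\alpha)}{\sin(\alpha/2)},$$
whose principal-value limit as $N\to\infty$ is $\cot(\alpha/2)$, so that $(2\pi)^{-1}\,\mathrm{p.v.}\!\int_\T\cot(\alpha/2) e^{-ik\alpha}\,d\alpha$ equals $-i$ for $k\ge 1$, $+i$ for $k\le -1$, and $0$ for $k=0$. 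Combining the two pieces recovers \eqref{intro11} under the author's convention $\sgn(0)=1$.

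For \eqref{intro12} I would reduce to the integral just computed by integration by parts, using the antiderivative relation
$$\frac{1}{4\sin^2(\alpha/2)} \;=\; -\frac{1}{2}\,\frac{d}{d\alpha}\cot(\alpha/2).$$
Writing the principal value as $\lim_{\varep\to 0^+}\int_{\varep<|\alpha|<\pi}$ and integrating by parts on each of the two intervals, the endpoint contributions at $\pm\pi$ vanish because $\cot(\pm\pi/2)=0$, while those at $\pm\varep$ combine symmetrically into $(\cos(k\varep)-1)\cot(\varep/2)$, which is $O(\varep)$ and drops out in the limit. What survives is
$$J_k \;=\; \frac{-ik}{4\pi}\,\mathrm{p.v.}\!\int_\T \cot(\alpha/2) e^{-ik\alpha}\,d\alpha,$$
and substituting the value from the first step gives $J_k = -k\,\sgn(k)/2 = -|k|/2$ for $k\neq 0$, with $J_0=0$ matching trivially.

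The only delicate step I anticipate is the passage to the principal-value limit in the integration by parts: one must check that the two endpoint contributions at $\pm\varep$ really combine into something that is genuinely $O(\varep)$, a cancellation that relies on $e^{-ik\varep}+e^{ik\varep}-2 = O(\varep^2)$ beating the singularity $\cot(\varep/2) = O(1/\varep)$, as well as on the opposite orientations at the two endpoints so that no $O(1)$ mismatch survives. A minor book-keeping point is that the Fourier-series calculation naturally assigns the value $0$ at the zero frequency, whereas the author defines $\sgn(0)=1$ in \eqref{intro11}; this discrepancy is absorbed exactly by the imaginary constant $-i/2$ in the decomposition of the first paragraph, so no case analysis beyond the one already performed is required.
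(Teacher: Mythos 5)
Your proposal is correct, but it takes a genuinely different route from the paper's. For \eqref{intro11} the paper avoids invoking the Fourier series of the periodic Hilbert kernel: it computes $I_0$ and $I_{-1}$ directly by splitting $e^{-i\alpha/2}$ into real and imaginary parts and using oddness, then shows that the successive differences $I_k-I_{k-1}$ (for $k\ge1$) and $I_k-I_{k+1}$ (for $k\le -2$) vanish because $e^{-i\alpha/2}-e^{i\alpha/2}=-2i\sin(\alpha/2)$ cancels the denominator — a telescoping argument that stays entirely elementary and self-contained. You instead decompose the kernel as $\tfrac12\cot(\alpha/2)-\tfrac{i}{2}$ and quote the principal-value Fourier coefficients of $\cot(\alpha/2)$; that is cleaner conceptually but outsources the hard work to a known identity. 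For \eqref{intro12} the contrast is sharper: the paper writes $e^{-ik\alpha}-1=-2i\sin(\alpha/2)e^{-i\alpha/2}\sum_{j=0}^{k-1}e^{-ij\alpha}$, so one power of $\sin(\alpha/2)$ cancels and $J_k$ collapses to $\sum_{j=0}^{k-1}(-i)I_j$ — a purely algebraic reduction to \eqref{intro11} with no principal-value book-keeping at all (plus the trivial $J_0=0$ and $J_{-k}=J_k$ by parity). Your integration-by-parts argument via $\tfrac{1}{4\sin^2(\alpha/2)}=-\tfrac12\frac{d}{d\alpha}\cot(\alpha/2)$ is also valid and arrives at the same reduction $J_k=\tfrac{-ik}{4\pi}\,\mathrm{p.v.}\!\int_\T\cot(\alpha/2)e^{-ik\alpha}\,d\alpha$, and you correctly identify the one delicate point (the $O(\varepsilon)$ cancellation of the two boundary terms at $\pm\varepsilon$). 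The trade-off is that the paper's geometric-sum factorization sidesteps that delicacy entirely, whereas your route has to justify it; on the other hand, your route works uniformly in $k$ without a parity reduction. Both are sound.
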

	
	\begin{proof} For $k\in\{0,-1\}$ we have
		\begin{equation*}
		I_0=\frac{1}{2\pi}\mathrm{p.v.}\int_{-\pi}^\pi\frac{e^{-i\alpha/2}}{2\sin(\alpha/2)}\,d\alpha=\frac{1}{2\pi}\mathrm{p.v.}\int_{-\pi}^\pi\frac{\cos(\alpha/2)-i\sin(\alpha/2)}{2\sin(\alpha/2)}\,d\alpha=-i/2,
		\end{equation*}
		and
		\begin{equation*}
		I_{-1}=\frac{1}{2\pi}\mathrm{p.v.}\int_{-\pi}^\pi\frac{e^{-i\alpha/2}}{2\sin(\alpha/2)}e^{i\alpha}\,d\alpha=\frac{1}{2\pi}\mathrm{p.v.}\int_{-\pi}^\pi\frac{\cos(\alpha/2)+i\sin(\alpha/2)}{2\sin(\alpha/2)}\,d\alpha=i/2,
		\end{equation*}
		due to the oddness of the function $\alpha\to\cos(\alpha/2)/\sin(\alpha/2)$. Moreover, if $k\geq 1$ then
		\begin{equation*}
		I_k-I_{k-1}=\frac{1}{2\pi}\int_{\T}\frac{e^{-i\alpha/2}}{2\sin(\alpha/2)}[e^{-ik\alpha}-e^{-i(k-1)\alpha}]\,d\alpha=\frac{1}{2\pi}\int_{\T}\frac{e^{-ik\alpha}[e^{-i\alpha/2}-e^{i\alpha/2}]}{2\sin(\alpha/2)}\,d\alpha=0.
		\end{equation*}
		Finally, if $k\leq -2$ then
		\begin{equation*}
		I_k-I_{k+1}=\frac{1}{2\pi}\int_{\T}\frac{e^{-i\alpha/2}}{2\sin(\alpha/2)}[e^{-ik\alpha}-e^{-i(k+1)\alpha}]\,d\alpha=\frac{1}{2\pi}\int_{\T}\frac{e^{-i(k+1)\alpha}[e^{i\alpha/2}-e^{-i\alpha/2}]}{2\sin(\alpha/2)}\,d\alpha=0.
		\end{equation*}
		The desired conclusions \eqref{intro11} follow.
		
		To prove \eqref{intro12} we notice that $J_0=0$ and $J_{-k}=J_k$ (by parity considerations). For $k\geq 1$ we write 
		\begin{equation*}
		e^{-ik\alpha}-1=(e^{-i\alpha}-1)(1+e^{-i\alpha}+\ldots+e^{-i(k-1)\alpha})=-2i\sin(\alpha/2)e^{-i\alpha/2}\sum_{j=0}^{k-1} e^{-ij\alpha},
		\end{equation*}
		therefore, using \eqref{intro11},
		\begin{equation*}
		J_k=\frac{1}{2\pi}\mathrm{p.v.}\int_{\T}\frac{-ie^{-i\alpha/2}}{2\sin(\alpha/2)}\sum_{j=0}^{k-1} e^{-ij\alpha}\,d\alpha=-k/2. 
		\end{equation*}
		The desired identities \eqref{intro12} follow.
	\end{proof}

	\subsection{The linearized problem} 
	We begin with a further reduction. We write
	\begin{equation}\label{freq1}
	X(r)=a_0+a_1e^{ir}+\sum_{k\neq\{0,1\}}a_ke^{ikr}=a_0+a_1e^{ir}+Y(r).
	\end{equation}
	Then we can easily see that
	\begin{equation}\label{freq2}
	X'(r)=ia_1e^{ir}+Y'(r)
	\end{equation}
	as well as
	\begin{equation}\label{freq3}
	\widetilde{X}(s,r)=\frac{e^{-i(s+r)/2}(Y(r)-Y(s))}{2\sin((s-r)/2)}+\frac{e^{-i(s+r)/2}(a_1e^{ir}-a_1e^{is})}{2\sin((s-r)/2)}=\widetilde{Y}(s,r)-ia_1.
	\end{equation}

	Using \eqref{freq2} and \eqref{freq3}, we can rewrite \eqref{intro8} as
	\begin{equation}\label{freq7}
	\begin{split}
	\mathcal{N}(s)=\frac{-i}{4\pi}\mathrm{p.v.} \int_{\T}&\Re\Big[\frac{e^{-i(s-r)}(1\!-\!\frac{ie^{-ir}Y'(r)}{1+a_1})^2}{(1+\frac{i\widetilde{Y}(s,r)}{1+a_1})^2}\Big]\frac{e^{i(s+r)/2}(1\!+\!a_1\!+\!i\widetilde{Y}(s,r))}{2\sin((s-r)/2)}T(|1\!+\!a_1\!-\!ie^{-ir}Y'(r)|)\,dr.
	\end{split}
	\end{equation}
	Linearizing $T(\cdot)$ in terms of $Y$ and denoting $A=A(a_1)=T(|1+a_1|)$, $B=B(a_1)=T'(|1+a_1|)$, we now extract the $0$-th order and first order components of the function $\mathcal{N}$ to find
	\begin{equation}\label{freq8}
	\begin{split}
	\mathcal{N}_0(s)&:=\frac{-i}{4\pi}\mathrm{p.v.} \int_{\T}\Re\big[e^{-i(s-r)}\big]\frac{e^{i(s+r)/2}(1+a_1)}{2\sin((s-r)/2)}A\,dr\\
	&=\frac{-iA(1+a_1)}{4\pi}\mathrm{p.v.} \int_{\T}\Re\big[e^{i\alpha}\big]\frac{e^{i(2s-\alpha)/2}}{2\sin(\alpha/2)}\,d\alpha\\
	&=0,
	\end{split}
	\end{equation}
	by using Lemma~\ref{introLemma} in the last line.

	For the first order component, we get
	\begin{equation}\label{freq9}
	\begin{split}
	\mathcal{N}_1(s)&:=\frac{-i}{4\pi}\mathrm{p.v.} \int_{\T}\Re\bigg[\frac{e^{-i(s-r)}(-2)i}{1+a_1}[e^{-ir}Y'(r)+\widetilde{Y}(s,r)]\bigg]\frac{e^{i(s+r)/2}(1+a_1)}{2\sin((s-r)/2)}A\,dr\\
	&+\frac{-i}{4\pi}\mathrm{p.v.} \int_{\T}\Re\big[e^{-i(s-r)}\big]\frac{e^{i(s+r)/2}i\widetilde{Y}(s,r)}{2\sin((s-r)/2)}A\,dr\\
	&+\frac{-i}{4\pi}\mathrm{p.v.} \int_{\T}\Re\big[e^{-i(s-r)}\big]\frac{e^{i(s+r)/2}(1+a_1)}{2\sin((s-r)/2)}\frac{B[-ie^{-ir}Y'(r)(1+\overline{a_1})+ie^{ir}\overline{Y'(r)}(1+a_1)]}{2|1+a_1|}\,dr.
	\end{split}
	\end{equation}
	We make the change of variables $r=s-\alpha$ and simplify slightly the expressions to rewrite
	\begin{equation}\label{freq10}
	\begin{split}
	&\mathcal{N}_1=I+II+II,\\
	&I(s):=A\frac{ie^{is}}{2\pi}\mathrm{p.v.} \int_{\T}\Re\bigg[ie^{-is}\frac{Y'(s-\alpha)}{1+a_1}+ie^{-i\alpha}\frac{\widetilde{Y}(s,s-\alpha)}{1+a_1}\bigg]\frac{e^{-i\alpha/2}(1+a_1)}{2\sin(\alpha/2)}\,d\alpha,\\
	&II(s):=A\frac{e^{is}}{4\pi}\mathrm{p.v.} \int_{\T}\Re\big[e^{-i\alpha}\big]\frac{e^{-i\alpha/2}\widetilde{Y}(s,s-\alpha)}{2\sin(\alpha/2)}\,d\alpha,\\
	&III(s):=B\frac{e^{is}(1+a_1)}{8\pi}\mathrm{p.v.} \int_{\T}\Re\big[e^{-i\alpha}\big]\frac{e^{-i\alpha/2}}{2\sin(\alpha/2)}\frac{-2i\Im{\big(e^{-i(s-\alpha)}Y'(s-\alpha)(1+\overline{a_1})\big)}}{|1+a_1|}d\alpha.
	\end{split}
	\end{equation}
	Assuming now that $Y(r)=a_ke^{ikr}$ for some $k\in\Z \setminus\{0,1\}$, we use Lemma~\ref{introLemma} to calculate
	\begin{equation*}
	\begin{split}
	I(s)&=A\frac{ie^{is}}{2\pi}\mathrm{p.v.} \int_{\T}\frac{e^{-i\alpha/2}}{2\sin(\alpha/2)}\frac{1}{2}\Big[-ka_ke^{-ik\alpha}e^{i(k-1)s}-k\overline{a_k}e^{ik\alpha}e^{-i(k-1)s}\frac{1+a_1}{1+\overline{a_1}}\\
	&\qquad\qquad+\frac{ia_ke^{-i\alpha/2}(e^{-ik\alpha}-1)e^{i(k-1)s}}{2\sin(\alpha/2)}-\frac{i\overline{a_k}e^{i\alpha/2}(e^{ik\alpha}-1)e^{-i(k-1)s}}{2\sin(\alpha/2)}\frac{1+a_1}{1+\overline{a_1}}\Big]\,d\alpha
\end{split}
\end{equation*}
 which simplifies to
 	\begin{equation*}
	\begin{split}
	I(s)&=-\frac{1}{2}e^{iks}Aika_kI_k-\frac{1}{2}e^{-i(k-2)s}Aik\overline{a_k}I_{-k}\frac{1+a_1}{1+\overline{a_1}}-\frac{1}{2}e^{iks}Aa_k(J_{k+1}-J_1)\\
 &\quad+\frac{1}{2}e^{-i(k-2)s}A\overline{a_k}J_{-k}\frac{1+a_1}{1+\overline{a_1}}\\
	&=-\frac{1}{4}e^{iks}A|k|a_k+\frac{1}{4}e^{-i(k-2)s}A|k|\overline{a_k}\frac{1+a_1}{1+\overline{a_1}}+\frac{1}{4}e^{iks}A(|k+1|-1)a_k\\
 &\quad-\frac{1}{4}e^{-i(k-2)s}A|k|\overline{a_k}\frac{1+a_1}{1+\overline{a_1}},
	\end{split}
	\end{equation*}
	\begin{equation*}
	\begin{split}
	II(s)&=A\frac{e^{is}}{4\pi}\mathrm{p.v.} \int_{\T}\frac{e^{-i\alpha}+e^{i\alpha}}{2}\frac{e^{-i\alpha/2}}{2\sin(\alpha/2)}\frac{a_ke^{i\alpha/2}(e^{-ik\alpha}-1)e^{i(k-1)s}}{2\sin(\alpha/2)}\,d\alpha\\
	&=-\frac{1}{8}e^{iks}A(|k+1|+|k-1|-2)a_k,
	\end{split}
	\end{equation*}
	and
	\begin{equation*}
	\begin{split}
	III(s)&=B\frac{e^{is}(1+a_1)}{8\pi}\mathrm{p.v.} \int_{\T}\frac{e^{-i\alpha}+e^{i\alpha}}{2}\frac{e^{-i\alpha/2}}{2\sin(\alpha/2)}\\
 &\qquad\times\big[-ika_k\frac{(1+\overline{a_1})}{|1+a_1|}e^{i(k-1)(s-\alpha)}-ik\overline{a_k}\frac{(1+a_1)}{|1+a_1|}e^{-i(k-1)(s-\alpha)}\big]\,d\alpha\\
	&=-ika_k\frac{Be^{iks}|1+a_1|}{8}(I_k+I_{k-2})-ik\overline{a_k}\frac{Be^{-i(k-2)s}(1+a_1)^2}{8|1+a_1|}(I_{-k+2}+I_{-k})\\
	&=-\frac{1}{16}e^{iks}Ba_kk\big(\sgn(k)+\sgn(k-2)\big)|1+a_1|-\frac{1}{16}e^{-i(k-2)s}B\overline{a_k}k(\sgn(-k+2)\\
 &\quad+\sgn(-k))\frac{(1+a_1)^2}{|1+a_1|},
	\end{split}
	\end{equation*}
 
 where we define $\sgn(k)$ as in \eqref{intro11}.
	Therefore
	\begin{equation*}
	\begin{split}
	\mathcal{N}_1(s)=&-\frac{1}{8}A(a_ke^{iks})(2|k|+|k-1|-|k+1|)\\
	&-\frac{1}{8}B(a_ke^{iks})|1+a_1|(|k|-\delta_1(k))+\frac{1}{8}B\frac{(1+a_1)^2}{|1+a_1|}(\overline{a_k}e^{-i(k-2)s})(|k|-\delta_1(k)-2\delta_2(k)).
	\end{split}
	\end{equation*}
	By linearity, in the general case, if
	\begin{equation}\label{freq11}
	Y(t,s)=\sum_{k\neq\{0,1\}}a_k(t)e^{iks}
	\end{equation}
	then
	\begin{equation}\label{freq12}
	\begin{split}
	\mathcal{N}_1(t,s)&=-\sum_{k\neq\{0,1\}}\big\{\frac{1}{8}A(a_k(t)e^{iks})(2|k|+|k-1|-|k+1|)\big\}\\
	&\quad-\sum_{k\neq\{0,1\}}\big\{\frac{|1+a_1|}{8}B(a_k(t)e^{iks})(|k|-\delta_1(k))\big\}\\
    &\quad+\sum_{k\neq\{0,1\}}\big\{\frac{(1+a_1)^2}{8|1+a_1|}B(\overline{a_k}(t)e^{-i(k-2)s})(|k|-\delta_1(k)-2\delta_2(k))\big\}\\
	&=\sum_{k\ne\{0,1\}}c_k(t)e^{iks}
	\end{split}
	\end{equation}
	where
	\begin{equation}\label{freq13}
 \begin{aligned}
c_k(t)&:=-\frac{A}{8}a_k(t)(2|k|+|k-1|-|k+1|)\\
 &\quad-\frac{B|1+a_1|}{8}\bigg[a_k(t)(|k|-\delta_1(k))+\frac{(1+a_1)^2}{|1+a_1|^2}\overline{a_{2-k}}(t)(|2-k|-\delta_1(2-k)-2\delta_2(2-k))\bigg].     
 \end{aligned}
\end{equation}

	We notice by identifying the coefficients of $e^{is}$ terms, that $c_0=c_1=0$.
	
	At the linearized level, the equation is
	\begin{equation}\label{freq14}
	\partial_ta_k(t)=c_k(t),\qquad k\in\Z.
	\end{equation}
	We can solve this system explicitly by noticing that the system decouples in pairs of linear equations. Indeed,
	
	$\bullet\,\,$ if $k\in\{0,1\}$ then $c_k(t)=0$, and the equation is simply $\partial_ta_k(t)=0$. This is consistent with the fact that that the functions $Y(s)=a_0+a_1e^{is}$ are stationary solutions of the original system.
	
	$\bullet\,\,$ if $k=2$ then the corresponding equation in the system \eqref{freq14} decouples,
	\begin{equation}\label{freq15}
	\partial_ta_2(t)=-\frac{A}{4}a_2(t)-\frac{B}{4}|1+a_1|a_2(t).
	\end{equation}
 The function $a_2$ has exponential decay as $t\to\infty$ if and only if $A+B|1+a_1|>0$. 
 Recalling that 
 \begin{equation*}
     \begin{aligned}
         A&=A(a_1)=T(|1+a_1|)=\frac{\mathcal{T}(|1+a_1|)}{|1+a_1|},\\
B&=B(a_1)=T'(|1+a_1|)=\frac{\mathcal{T}'(|1+a_1|)}{|1+a_1|}-\frac{\mathcal{T}(|1+a_1|)}{|1+a_1|^2},
     \end{aligned}
 \end{equation*} 
the structural condition \eqref{Tau_cond} guarantees that \begin{equation}\label{cond2}
    \begin{aligned}
A+B|1+a_1|=\mathcal{T}'(|1+a_1|)>0,        
    \end{aligned}
\end{equation}
thus $a_2(t)$ decays exponentially at a linear level.
	
	$\bullet\,\,$ if $k\geq 3$ then the system \eqref{freq14} decouples into systems of two coupled equations,
	\begin{equation}\label{freq16}
	\begin{split}
	\partial_ta_k(t)&=-\frac{1}{8}[(2k-2)A+kB|1+a_1|]a_k(t)+\frac{1}{8}(k-2)B\frac{(1+a_1)^2}{|1+a_1|}\overline{a_{2-k}}(t),\\
	\partial_t\overline{a_{2-k}}(t)&=-\frac{1}{8}[(2k-2)A+(k-2)B|1+a_1|]\overline{a_{2-k}}(t)+\frac{1}{8}kB\frac{(1+\overline{a_1})^2}{|1+a_1|}a_k(t).
	\end{split}
	\end{equation}
	These systems have exponentially decaying solutions if and only if all the eigenvalues of the associated matrices
	\begin{equation}\label{freq17}
	\begin{bmatrix}
	&(2k-2)A+kB|1+a_1| &-(k-2)B\frac{(1+a_1)^2}{|1+a_1|}\\
	&-kB\frac{(1+\overline{a_1})^2}{|1+a_1|} &(2k-2)A+(k-2)B|1+a_1|
	\end{bmatrix}
	\end{equation}
	have positive real parts. The eigenvalues are given by
 \begin{equation*}
     \begin{aligned}
         \lambda_1=2(k-1)(A+B|1+a_1|),\quad \lambda_2=2A(k-1),
     \end{aligned}
 \end{equation*}
 so condition \eqref{Tau_cond} guarantees their positivity.

	$\bullet\,\,$ if $k\leq -1$ then the system \eqref{freq14} decouples into systems of two coupled equations,
	\begin{equation}\label{freq18}
	\begin{split}
	\partial_ta_k(t)&=-\frac{1}{8}[(-2k+2)A-kB|1+a_1|]a_k(t)+\frac{1}{8}(2-k)B\frac{(1+a_1)^2}{|1+a_1|}\overline{a_{2-k}}(t),\\
	\partial_t\overline{a_{2-k}}(t)&=-\frac{1}{8}[(-2k+2)A+(2-k)B|1+a_1|]\overline{a_{2-k}}(t)-\frac{1}{8}kB\frac{(1+\overline{a_1})^2}{|1+a_1|}a_k(t).
	\end{split}
	\end{equation}
	This is equivalent to the system \eqref{freq16}, by letting $k=2-l$ and taking complex conjugates.
	
	\section{Wellposedness in the Wiener algebra}\label{sect:wiener}
	
	We begin by proving wellposedness in the Wiener Algebra space $W$, defined as in \eqref{W_norm}. We estimate the nonlinearity in these spaces. We will do this by decomposing the functions into Fourier series, and hence begin with the following lemma, which will then be used throughout the analysis.
	
	\begin{lemma}\label{lem:freq0}
		Let $A_k, B_k, C_k$ be given sequences, such that
		\begin{equation}\label{newB6}
		|C_n|\leq\sum_{k\in\Z}|A_k||B_{n-k}|.
		\end{equation}
		Then 
		\begin{equation}\label{newB6.61}
		\|\{C_k\}_{k\in\Z}\|_{N}\lesssim \|\{A_k\}_{k\in\Z}\|_{N}\times\|\{B_k\}_{k\in\Z}\|_{N}.
		\end{equation}
	\end{lemma}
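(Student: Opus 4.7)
The norm $\|\cdot\|_N$ splits into an unweighted $\ell^1$-piece and a weighted dyadic piece carrying $(1+|k|t)^{2/3}$; the plan is to treat them separately. The $\ell^1$-piece is immediate from Fubini applied to $|C_n|\leq\sum_k|A_k||B_{n-k}|$: at each $t$ one has $\sum_n|C_n|\leq(\sum_k|A_k|)(\sum_j|B_j|)$, and taking suprema in $t$ factorizes to yield the bound by $\|\{A_k\}\|_N\|\{B_k\}\|_N$.

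For the weighted piece I would fix $t\geq 0$ and $m\in\Z_+$ and bound $\sum_{|n|\in[2^{m-1},2^{m+1}]}|C_n|(1+|n|t)^{2/3}$ by partitioning the inner $k$-sum into three regions according to the size of $|k|$ relative to $2^m$: the low range $|k|\leq 2^{m-2}$ (where $|n-k|\sim|n|\sim 2^m$ and hence $(1+|n|t)^{2/3}\lesssim (1+|n-k|t)^{2/3}$, so the weight transfers to the $B$ factor), the comparable range $|k|\in(2^{m-2},2^{m+3}]$ (where $(1+|k|t)^{2/3}\sim(1+|n|t)^{2/3}$ and the weight stays on the $A$ factor), and the high range $|k|>2^{m+3}$ (where automatically $|n-k|\sim|k|$ too). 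In the first two ranges, a straightforward swap of summation order yields a weighted sum on one factor supported on $O(1)$ dyadic shells adjacent to scale $m$, multiplied by the $\ell^1$-norm of the other factor; both contributions are directly $\lesssim\|\{A_k\}\|_N\|\{B_k\}\|_N$.

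The genuine obstacle lies in the high range. Writing $|k|\in[2^{m'-1},2^{m'+1}]$ with $m'>m+3$, a single-scale bound would yield a contribution of size $\|\{A_k\}\|_N\|\{B_k\}\|_N$ for each $m'$, whose sum over $m'$ diverges. The remedy is to play the two pieces of the $N$ norm against each other on opposite factors: the weighted $B$-shell bound yields $\sum_{|j|\in[2^{m'-2},2^{m'+2}]}|B_j|\lesssim \|\{B_k\}\|_N/(1+2^{m'-1}t)^{2/3}$, which cancels exactly the weight $(1+|k|t)^{2/3}\sim(1+2^{m'-1}t)^{2/3}$ coming from the $A$ side; what remains is $\|\{B_k\}\|_N$ times $\sum_{|k|\in[2^{m'-1},2^{m'+1}]}|A_k|$, and summing this over $m'>m+3$ uses the essential disjointness of dyadic shells to produce $\|\{B_k\}\|_N\cdot\|\{A_k\}\|_{\ell^1}\leq\|\{A_k\}\|_N\|\{B_k\}\|_N$. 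Assembling the three ranges closes the estimate. The hard part, as just described, is organizing the bookkeeping in the high range so that the cancellation between the growing weight on the $A$ side and the decay from the weighted $B$-shell becomes visible; everything else is routine convolution bookkeeping.
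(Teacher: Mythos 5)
Your proof is correct and uses essentially the same strategy as the paper: Fubini for the unweighted $\ell^1$ piece, followed by a three-region frequency decomposition of the convolution for the weighted dyadic piece. The paper's three regions are symmetric in $A$ and $B$ (small $|k|$, small $|n-k|$, and both of order $\gtrsim 2^a$), whereas you split by the size of $|k|$ alone; this is an immaterial organizational difference.

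One caveat about your bookkeeping in the high range. The phrase ``cancels exactly the weight $(1+|k|t)^{2/3}$ coming from the $A$ side'' is misleading: in that region you apply $\ell^1$, not the weighted norm, to the $A$ factor, so no weight is generated there. What actually closes the estimate is the elementary inequality $(1+2^mt)^{2/3}\leq(1+2^{m'}t)^{2/3}$ for $m'>m$, so the decay $(1+2^{m'}t)^{-2/3}$ supplied by the weighted $B$-shell dominates the target weight outright, and the leftover is absorbed by the almost-disjointness of the dyadic shells on the $A$ side. In fact there is extra room in this region, and the paper exploits it: in \eqref{newB11} it bounds $(1+2^at)^{4/3}\sum_n C^3_n$ (power $4/3$, not $2/3$) by placing the weighted piece of the $N$-norm on both $A$ and $B$, and this surplus decay is precisely what gives \eqref{freq21.1}, used later to handle the modes $k=0,1$. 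Your argument as written establishes \eqref{newB6.61} but does not extract that bonus.
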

	\begin{proof}
		Assume for simplicity that
		\begin{equation}\label{newB6.5}
		\|\{A_k\}_{k\in\Z}\|_{N}\leq 1\qquad\text{ and }\qquad\|\{B_k\}_{k\in\Z}\|_{N}\leq 1. 
		\end{equation}
		We would like to prove that 
		\begin{equation}\label{newB6.6}
		\|\{C_k\}_{k\in\Z}\|_{N}\lesssim 1.
		\end{equation}

		Indeed, we estimate first, for any $t\geq 0$,
		\begin{equation}\label{newB7}
		\sum_{n\in\Z}|C_n|\leq\sum_{n,k\in\Z}|A_k||B_{n-k}|\leq\sum_{l,k\in\Z}|A_k||B_l|\leq\Big(\sum_{k\in\Z}|A_k|\Big)\Big(\sum_{l\in\Z}|B_l|\Big)\leq 1.
		\end{equation}
		Also, for any $t\geq 0$ and $a\in \Z_+$ satisfying $2^at\geq 1$ we estimate
		\begin{equation}\label{newB8}
		(1+2^at)^{2/3}\sum_{|n|\in[2^{a-1},2^{a+1}]}|C_n|\leq (1+2^at)^{2/3}\sum_{|n|\in[2^{a-1},2^{a+1}]}\big[C^1_n+C^2_n+C^3_n\big],
		\end{equation}
		where, if $|n|\in[2^{a-1},2^{a+1}]$ we define
		\begin{equation*}
		\begin{split}
		&C_n^1:=\sum_{k\in\Z,\,|k|\leq 2^{a-4}}|A_k||B_{n-k}|,\\
		&C_n^2:=\sum_{k\in\Z,\,|n-k|\leq 2^{a-4}}|A_k||B_{n-k}|,\\
		&C_n^3:=\sum_{k\in\Z,\,|k|\geq 2^{a-6},\,|n-k|\geq 2^{a-6}}|A_k||B_{n-k}|.
		\end{split}
		\end{equation*}

		Then we estimate
		\begin{equation}\label{newB10}
		\begin{split}
		(1+2^at)^{2/3}\sum_{|n|\in[2^{a-1},2^{a+1}]}C^1_n&\lesssim (1+2^at)^{2/3}\sum_{|k|\leq 2^{a-4},\,|l|\in[2^{a-2},2^{a+2}]}|A_k||B_l|\\
		&\lesssim \Big(\sum_{|k|\leq 2^{a-4}}|A_k|\Big)(1+2^at)^{2/3}\Big(\sum_{|l|\in[2^{a-2},2^{a+2}]}|B_l|\Big)\\
		&\lesssim 1,
		\end{split}
		\end{equation}
		where we used the assumption \eqref{newB6.5} in the last bound. The bound on the contribution of $C^2_n$ is similar. Finally, for the High-High-to-Low interaction we can get better bounds (recall that $2^at\geq 1$)
		\begin{equation}\label{newB11}
		\begin{split}
		(1+2^at)^{4/3}&\sum_{|n|\in[2^{a-1},2^{a+1}]}C^3_n\lesssim (2^at)^{4/3}\sum_{b\geq a-8}\Big(\sum_{|k|\in[2^{b-10},2^{b+10}]}|A_k|\Big)\Big(\sum_{|l|\in[2^{b-10},2^{b+10}]}|B_l|\Big)\\
		&\lesssim \sum_{b\geq a-8}2^{\frac43(a-b)}(2^bt)^{4/3}\Big(\sum_{|k|\in[2^{b-10},2^{b+10}]}|A_k|\Big)\Big(\sum_{|l|\in[2^{b-10},2^{b+10}]}|B_l|\Big)\\
		&\lesssim 1,
		\end{split}
		\end{equation}
		where we used again the assumption \eqref{newB6.5} in the last bound. The desired estimate \eqref{newB6.6} follows from the bounds \eqref{newB7}--\eqref{newB11}.
	\end{proof}
	
	Before we proceed, we recall that
	\begin{equation}\label{freq18.1}
	\begin{split}
	\widetilde{Y}(s,s-\alpha)&=\frac{e^{-is}e^{-i\frac{\alpha}{2}}(Y(s-\alpha)-Y(s))}{2\sin(\frac{\alpha}{2})}\\&=\frac{e^{-is}e^{-i\frac{\alpha}{2}}\sum_{k\neq\{0,1\}}a_ke^{isk}(e^{-i\alpha k}-1)}{2\sin(\frac{\alpha}{2})},
	\end{split}
	\end{equation}
	\begin{lemma}\label{lem:freq1}
		Let $\widetilde{Y}(s,s-\alpha)$ be defined as in \eqref{freq18.1} and $Y\in W$. Then we have
		\begin{equation}\label{freq19}
		\widetilde{Y}^{n}(s,s-\alpha)=\sum_{k\in\Z}c_k^{(n)}(\alpha)e^{iks},
		\end{equation}
		for
		\begin{equation}\label{freq20}
		c_k^{(n)}(\alpha):=\sum_{p\in\Z}a_{p,k}^{(n)}e^{ip\alpha}, \qquad\text{where here}\quad \|\{b_k^{(n)}\}_{k\in\Z}\|_N\lesssim_{n}\|Y\|^{n}_W,
		\end{equation}
		with
		\begin{equation}\label{freq21}
		\sum_{p\in\Z}|a_{p,k}^{(n)}|=:b_k^{(n)}.
		\end{equation}
		Moreover, for $k=0,1$ and $n\geq2$, then
		\begin{equation}\label{freq21.1}
		\sup_{t\in[0,\infty)}b_k^{(n)}(1+t)^{4/3}\lesssim_{n}\|Y\|_W^{n}.
		\end{equation}
	\end{lemma}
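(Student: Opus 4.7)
The plan is to proceed by induction on $n$, combining the recursion $\widetilde{Y}^n = \widetilde{Y}^{n-1}\cdot\widetilde{Y}$ with Lemma~\ref{lem:freq0}. The main bound \eqref{freq21} follows from this recursion applied mode-by-mode, while the sharper decay \eqref{freq21.1} at the non-dissipative modes $k=0,1$ will rely on the crucial fact that the $s$-Fourier modes of $\widetilde{Y}$ itself vanish at $k=-1$ and $k=0$ (since $a_0=a_1=0$). For the base case $n=1$, the telescoping identity $\frac{e^{-i\alpha/2}(e^{-ij\alpha}-1)}{2\sin(\alpha/2)} = -i\sum_{p=1}^{j}e^{-ip\alpha}$ for $j\geq 1$ (and its conjugate for $j\leq -1$) rewrites \eqref{freq18.1} as
\begin{equation*}
\widetilde{Y}(s,s-\alpha) = \sum_{k\in\Z} a_{k+1}\,D_{k+1}(\alpha)\,e^{iks},
\end{equation*}
where $D_j(\alpha)$ is a trigonometric polynomial in $\alpha$ with $|j|$ nonzero Fourier coefficients of unit modulus (and $D_0\equiv 0$). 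Hence $b_k^{(1)}\leq |a_{k+1}|\cdot|k+1|$, which automatically vanishes for $k\in\{-1,0\}$, and the estimate $\|\{b_k^{(1)}\}\|_N\lesssim\|Y\|_W$ is immediate from the definitions, after the harmless shift $k\mapsto k+1$ (justified dyadically since $|k+1|\sim|k|$ for $|k|\geq 1$).

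For the inductive step yielding \eqref{freq21}, multiplying the Fourier expansions in $s$ of $\widetilde{Y}^{n-1}$ and $\widetilde{Y}$ and matching Fourier coefficients in $\alpha$ gives
\begin{equation*}
a_{r,k}^{(n)} = \sum_{l\in\Z}\sum_{q\in\Z} a_{r-q,k-l}^{(n-1)}\,a_{q,l}^{(1)}, \qquad b_k^{(n)}\leq \sum_{l\in\Z} b_l^{(n-1)}\,b_{k-l}^{(1)},
\end{equation*}
so Lemma~\ref{lem:freq0} delivers $\|\{b_k^{(n)}\}\|_N\lesssim \|\{b_k^{(n-1)}\}\|_N\cdot\|\{b_k^{(1)}\}\|_N\lesssim \|Y\|_W^n$ as required.

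To prove the low-mode decay \eqref{freq21.1} at $k=0$, the vanishing $b_{-1}^{(1)}=b_0^{(1)}=0$ collapses the convolution to $b_0^{(n)}\leq \sum_{l\neq 0,1} b_l^{(n-1)}b_{-l}^{(1)}$, so every surviving index satisfies $|l|\geq 1$. For $t\leq 1$ the $\ell^1$-piece of the $N$-norm gives $b_0^{(n)}\leq C\|Y\|_W^n$ and $(1+t)^{4/3}\lesssim 1$. For $t\geq 1$ we dyadically decompose in $|l|$: on each block $|l|\sim 2^m$ the dyadic piece of the $N$-norm provides
\begin{equation*}
\sum_{|l|\sim 2^m} b_l^{(n-1)} \lesssim \|Y\|_W^{n-1}(1+2^m t)^{-2/3}, \qquad \sup_{|l|\sim 2^m} b_{-l}^{(1)} \lesssim \|Y\|_W(1+2^m t)^{-2/3},
\end{equation*}
so the block contribution is $\lesssim \|Y\|_W^n(1+2^m t)^{-4/3}$; summing over $m\geq 0$ gives $b_0^{(n)}\lesssim \|Y\|_W^n t^{-4/3}$, hence $(1+t)^{4/3}b_0^{(n)}\lesssim \|Y\|_W^n$. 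The case $k=1$ is analogous, summing over $l\neq 1,2$; the only new feature is the index $l=0$, whose contribution $b_0^{(n-1)}b_1^{(1)}$ either vanishes when $n=2$ (because $b_0^{(1)}=0$) or satisfies the required $(1+t)^{-4/3}$ decay when $n\geq 3$ by the inductive hypothesis applied to $b_0^{(n-1)}$.

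The principal obstacle is the delicate low-mode bookkeeping: one must juggle the two Fourier variables $s$ and $\alpha$, the $\ell^1$-in-$p$ summation defining $b_k^{(n)}$, and the mixed $\ell^1$/dyadic structure of the $N$-norm simultaneously, all while ensuring that the convolution inherits the vanishing of the low modes of $\widetilde{Y}$ that is the essential input to \eqref{freq21.1}. This extra $(1+t)^{-4/3}$ gain is what ultimately allows the fixed-point argument in later sections to close at the two non-dissipative modes $a_0$ and $a_1$ of the full Peskin problem.
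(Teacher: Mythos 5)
Your proof is correct and follows essentially the same route as the paper: induction on $n$, with the base case obtained from the telescoping (geometric-series) expansion of $e^{-ik\alpha}-1$ giving $b_k^{(1)}\le|k+1|\,|a_{k+1}|$, and the inductive step via the convolution bound $b_k^{(n)}\le\sum_l b_l^{(n-1)}b_{k-l}^{(1)}$ together with Lemma~\ref{lem:freq0}. The only difference is that you spell out the derivation of \eqref{freq21.1} in detail --- isolating the vanishing of $b_{-1}^{(1)}=b_0^{(1)}=0$, running the dyadic sum to get the extra $(1+t)^{-4/3}$, and handling the $l=0$ term at $k=1$ separately --- whereas the paper compresses this into the terse remark that \eqref{freq21.1} is a consequence of \eqref{newB11}; your unpacking is a correct and welcome elaboration of that step.
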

	
	\begin{proof}
		We argue by induction. We begin by checking the case $n=1$. From \eqref{freq18.1}, we expand $(e^{-i\alpha k}-1)$ to get
		\begin{equation}\label{freq22}
		\begin{split}
		\widetilde{Y}(s,s-\alpha)&=\frac{e^{-is}e^{-i\frac{\alpha}{2}}\big(\sum_{k\geq2}a_ke^{isk}(e^{-i\alpha}-1)\sum_{m=0}^{k-1}e^{-im\alpha}+\sum_{k\leq-1}a_ke^{isk}(e^{i\alpha}-1)\sum_{m=0}^{|k|-1}e^{im\alpha}\big)}{2\sin(\frac{\alpha}{2})}\\&=-ie^{-is}e^{-i\alpha}\sum_{k\geq2}a_ke^{isk}\sum_{m=0}^{k-1}e^{-im\alpha}+ie^{-is}\sum_{k\leq-1}a_ke^{isk}\sum_{m=0}^{|k|-1}e^{im\alpha}.
		\end{split}
		\end{equation}
		We now define
		\begin{equation*}
		a_{p,k}:=\begin{cases}
		-ia_{k+1}\qquad&\text{for }p\in[-(k+1),-1],\quad k\geq1\\ ia_{k+1}\qquad&\text{for }p\in[0,|k+1|-1],\quad k\leq-2 \\0\qquad&\text{elsewhere}.
		\end{cases}
		\end{equation*}
		Then, we can rewrite \eqref{freq22} as
		\begin{equation*}
		\widetilde{Y}(s,s-\alpha)=\sum_{k\in\Z}c_k(\alpha)e^{isk}\qquad\text{where }c_k(\alpha):=\sum_{p\in\Z}a_{p,k}e^{ip\alpha}.
		\end{equation*}
		We now define
		\begin{equation*}
		b_k:=\sum_{p\in\Z}|a_{p,k}|.
		\end{equation*}
		Then we get
		\begin{equation*}
		b_k=|k+1||a_{k+1}|.
		\end{equation*}
		From the definition of the norms, the conclusion of the lemma holds for $n=1$. 
		
		We assume now that the conclusion is true for $\widetilde{Y}^{n}$, and show that it must hold for $\widetilde{Y}^{n+1}$. We have
		\begin{equation*}
		\widetilde{Y}^{n+1}(s,s-\alpha)=\sum_{m\in\Z}c_m^{(n+1)}(\alpha)e^{ims} =\sum_{k\in\Z}c_k^{(n)}(\alpha)e^{iks} \sum_{\ell\in\Z}c_\ell^{(1)}(\alpha)e^{i\ell s} 
		\end{equation*} 
		where
		\begin{equation*}
		c_m^{(n+1)}(\alpha)=\sum_{k+\ell=m}c_k^{(n)}(\alpha)c_\ell^{(1)}(\alpha),
		\end{equation*}
		with 
		\begin{equation*}
		c_k^{(n)}(\alpha):=\sum_{p_{n}\in\Z}a_{p_{n},k}^{(n)}e^{ip_{n}\alpha}\quad\text{with }b_k^{(n)}:=\sum_{p_{n}\in\Z}|a_{p_{n},k}^{(n)}|.
		\end{equation*}
		By induction assumption, $b_k^{(n)}$ satisfies \eqref{freq20}. Moreover, we have
		\begin{equation*}
		c_m^{(n+1)}(\alpha)=\sum_{p_1,p_{n}\in\Z}\sum_{k\in\Z}a_{p_{n},k}^{(n)}e^{ip_{n}\alpha}a^{(1)}_{p_1,m-k}e^{ip_1\alpha},
		\end{equation*}  
		and hence setting
		\begin{equation*}
		a_{p_,m}^{(n+1)}=\sum_{k\in\Z}\sum_{p_{n}\in\Z}a_{p_{n},k}^{(n)}a_{p-p_{n},m-k}^{(1)}
		\end{equation*}
		we sum over $p$ to get
		\begin{equation*}
		b_m^{(n+1)}\leq \sum_{k\in\Z}|b_k^{(n)}||b_{m-k}^{(1)}|.
		\end{equation*}
		Applying Lemma~\ref{lem:freq0} now concludes the proof of the lemma, with \eqref{freq21.1} being a consequence of \eqref{newB11}.
	\end{proof}
	
	\begin{lemma}\label{lem:freq2}
		Let $Y\in W$. Then we have
		\begin{equation}\label{freq23}
		\big(e^{-ir}Y'(r)\big)^n=\sum_{k\in\Z}b_k^{(n)}e^{ikr}\qquad\text{with}\quad\|\{b_k^{(n)}\}_{k\in\Z}\|_N\lesssim_n\|Y\|^n_W.
		\end{equation}
		Moreover, for $k=0,1$ and $n\geq2$, we have
		\begin{equation*}
		\sup_{t\in[0,\infty)}|b_k|^{(n)}(1+t)^{4/3}\lesssim_n\|Y\|_W^{n}.
		\end{equation*}
	\end{lemma}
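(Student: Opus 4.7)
The plan is to mirror the inductive scheme of Lemma~\ref{lem:freq1}, replacing $\widetilde{Y}(s,s-\alpha)$ by the simpler object $e^{-ir}Y'(r)$. The proof splits into a base case, an induction on $n$, and a separate argument for the enhanced decay at the modes $k=0,1$.

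First, for the base case $n=1$, I would simply compute: since $Y(r)=\sum_{k\neq 0,1}a_k e^{ikr}$, differentiating and multiplying by $e^{-ir}$ gives
\begin{equation*}
e^{-ir}Y'(r)=\sum_{\substack{m\in\Z\\ m\neq -1,0}}i(m+1)a_{m+1}\,e^{imr},
\end{equation*}
so $b_m^{(1)}=i(m+1)a_{m+1}$ for $m\notin\{-1,0\}$ and $b_m^{(1)}=0$ otherwise. The weight $|m+1|$ is exactly the $|k|$ weight in the $W$-norm, and the shift $k\mapsto k-1$ preserves dyadic annuli up to adjacent scales, so $\|\{b_m^{(1)}\}\|_N\lesssim\|Y\|_W$ follows by directly comparing the definitions \eqref{W_norm} and \eqref{N_2_norm}.

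Next, for the induction step, I would factor $(e^{-ir}Y'(r))^{n+1}=(e^{-ir}Y'(r))^n\cdot(e^{-ir}Y'(r))$, expand both factors, and identify
\begin{equation*}
b_m^{(n+1)}=\sum_{k\in\Z}b_k^{(n)}\,b_{m-k}^{(1)}.
\end{equation*}
Applying Lemma~\ref{lem:freq0} with $A_k=b_k^{(n)}$ and $B_k=b_k^{(1)}$ then yields $\|\{b_m^{(n+1)}\}\|_N\lesssim_n\|Y\|_W^{n+1}$, closing the induction.

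The main step, and the only nontrivial one, is the improved bound at $k\in\{0,1\}$ for $n\geq 2$. The key structural input is that $b_{-1}^{(1)}=b_0^{(1)}=0$, which is forced by $a_0=a_1=0$. Hence in the recursions
\begin{equation*}
b_0^{(n)}=\sum_{l\in\Z}b_l^{(1)}\,b_{-l}^{(n-1)},\qquad b_1^{(n)}=\sum_{l\in\Z}b_l^{(1)}\,b_{1-l}^{(n-1)},
\end{equation*}
the index $l$ is restricted away from $\{-1,0\}$. Decomposing dyadically in $|l|\sim 2^a$, each annulus contributes at most $\|\{b^{(1)}\}\|_N\|\{b^{(n-1)}\}\|_N(1+2^a t)^{-4/3}$ by two applications of the annular $N$-bound; summing over $a\geq 0$ gives $t^{-4/3}$ for $t\geq 1$, while the trivial $\ell^\infty\cdot\ell^1$ estimate furnishes the uniform bound for $t\leq 1$. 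This is precisely the high-high-to-low mechanism isolated in \eqref{newB11}, now evaluated at output frequency $O(1)$. The only bookkeeping point is the $l=1$ contribution to $b_1^{(n)}$, which pairs $b_1^{(1)}$ with $b_0^{(n-1)}$: for $n=2$ this term vanishes since $b_0^{(1)}=0$, and for $n\geq 3$ the inductive hypothesis on $b_0^{(n-1)}$ supplies the $(1+t)^{-4/3}$ decay needed to close the recursion.
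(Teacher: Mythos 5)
Your proof is correct and follows the same inductive route as the paper: direct computation for $n=1$, convolution identity $b_m^{(n+1)}=\sum_k b_k^{(n)}b_{m-k}^{(1)}$, and Lemma~\ref{lem:freq0} to close the induction. The paper is terser about the enhanced decay at $k\in\{0,1\}$, simply attributing it (as in Lemma~\ref{lem:freq1}) to the high-high-to-low bound \eqref{newB11}; your explicit accounting of why the low-high and high-low pairings vanish (because $b_{-1}^{(1)}=b_0^{(1)}=0$, forced by $a_0=a_1=0$), and your treatment of the $l=1$ term in the $b_1^{(n)}$ recursion via the inductive hypothesis on $b_0^{(n-1)}$, supplies the bookkeeping that the paper leaves implicit. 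No substantive difference in approach.
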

	\begin{proof}
		We argue by induction. We begin by the looking at the case $n=1$. Taking $Y'(r)=\sum_{k\neq\{0\}}ika_ke^{ikr}$, we have
		\begin{equation*}
		e^{-ir}Y'(r)=\sum_{\ell\in\Z}b_\ell e^{ir\ell}\qquad\text{where}\quad b_\ell=i(k+1)a_{k+1},
		\end{equation*}
		and hence \eqref{freq23} clearly holds. We now assume that the conclusion holds for step $n$ and consider the step $n+1$. We get
		\begin{equation*}
		\big(e^{-ir}Y'(r) \big)^{n+1}=\sum_{m\in\Z}b_m^{(n+1)}e^{irm}=\sum_{k\in\Z}b_k^{(n)}e^{irk}\sum_{\ell\in\Z}b_\ell^{(1)}e^{ir\ell},
		\end{equation*}
		where here
		\begin{equation*}
		b_m^{(n+1)}=\sum_{k+\ell=m}b_k^{(n)}b_\ell^{(1)}.
		\end{equation*}
		We can now apply Lemma~\ref{lem:freq0} to conclude the proof.
	\end{proof}
	
	\begin{remark}\label{rem:freq1}
		We remark that analogues of Lemmas~\ref{lem:freq1}-\ref{lem:freq2} hold true for $\overline{\widetilde{Y}}(s,r)$ and $e^{ir}\overline{Y'}(r)$ respectively, by simply putting bars to the conclusions of the lemmas. Moreover, since $Y$ and $\overline{Y}$ have the same norm, the analogous lemmas for $\big[\widetilde{Y}(s,r)\big]^{m_1}\times\big[\overline{\widetilde{Y}}(s,r)\big]^{m_2}$and $\big[e^{-ir}Y'(r)\big]^{n_1}\times\big[e^{ir}\overline{Y'}(r)\big]^{n_2}$ are also proved in a completely identical way.
	\end{remark}
	
	\begin{lemma}\label{lem:freq3}
		Let $\widetilde{Y}(s,s-\alpha)$ be defined as in \eqref{freq18.1} and $Y\in W$. Then the nonlinear term $\mathcal{N}_g$ with $n_1+n_2+m_1+m_2\geq2$, defined as
		\begin{equation*}
		\mathcal{N}_g(s):=\int_{\T}e^{-i\alpha}\big[e^{-i(s-\alpha)}Y'(s-\alpha) \big]^{n_1}\big[e^{i(s-\alpha)}\overline{Y'}(s-\alpha) \big]^{n_2}\big[\widetilde{Y}(s,s-\alpha)\big]^{m_1}\big[\overline{\widetilde{Y}}(s,s-\alpha)\big]^{m_2}\frac{e^{is}e^{-i\frac{\alpha}{2}}}{2\sin(\frac{\alpha}{2})}d\alpha
		\end{equation*}
		can be written as
		\begin{equation*}
		\mathcal{N}_g(s)=\sum_{j\in\Z}B_je^{isj}\qquad\text{where}\quad\|\{B_j\}_{j\in\Z}\|_N\lesssim_{m+n}\|Y\|_W^{m+n}.
		\end{equation*} 
		where here we define $(m,n):=(m_1+m_2,n_1+n_2)$.
		Moreover, for $|j|<2$, we have
		\begin{equation*}
		\sup_{t\in[0,\infty)}|B_j|(1+t)^{4/3}\lesssim_{n+m}\|Y\|_W^{m+n}.
		\end{equation*}
	\end{lemma}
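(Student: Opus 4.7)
The plan is to expand each factor into a Fourier series in $s$ with $\alpha$-dependent coefficients, multiply the four factors into a single double Fourier expansion, perform the $\alpha$-integration using Lemma~\ref{introLemma}, and bound the resulting sums via iterated applications of Lemma~\ref{lem:freq0}.

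Concretely, by Lemma~\ref{lem:freq1}, Lemma~\ref{lem:freq2}, and Remark~\ref{rem:freq1}, each of the four factors admits an expansion $\sum_k c_k^{(i)}(\alpha) e^{iks}$ with $c_k^{(i)}(\alpha) = \sum_q a^{(i)}_{q,k} e^{iq\alpha}$, and the sequences $b_k^{(i)} := \sum_q |a^{(i)}_{q,k}|$ satisfy $\|\{b_k^{(i)}\}\|_N \lesssim \|Y\|_W^{p_i}$, where $p_i$ is the power of the $i$-th factor. Writing the product as $F(s,\alpha) = \sum_j D_j(\alpha) e^{ijs}$ with $D_j(\alpha) = \sum_q d_{q,j} e^{iq\alpha}$ and $e_j := \sum_q |d_{q,j}|$, the sequence $\{e_j\}$ is an iterated convolution of the $\{b_k^{(i)}\}$'s, so repeated application of Lemma~\ref{lem:freq0} yields $\|\{e_j\}\|_N \lesssim \|Y\|_W^{m+n}$. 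Substituting into the definition of $\mathcal{N}_g$ and integrating termwise in $\alpha$,
$$\mathcal{N}_g(s) = 2\pi \sum_j e^{i(j+1)s} \sum_q d_{q,j} I_{1-q},$$
and by Lemma~\ref{introLemma} one has $|I_k| = 1/2$. Setting $B_{j+1} := 2\pi \sum_q d_{q,j} I_{1-q}$ gives $|B_{j+1}| \leq \pi e_j$, so $\|\{B_j\}\|_N \lesssim \|\{e_j\}\|_N \lesssim \|Y\|_W^{m+n}$, proving the first claim (index shifts by $1$ do not affect the $N$ norm).

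The main obstacle is the enhanced $(1+t)^{4/3}$ decay for the low-frequency coefficients $B_j$ with $|j| < 2$. The key structural observation is that since $Y$ has no Fourier modes at $k = 0, 1$, each individual power-$1$ factor has vanishing $k = 0$ mode in its $s$-Fourier expansion: indeed, $e^{-ir}Y'(r) = \sum_{\ell \notin\{-1,0\}} i(\ell+1)a_{\ell+1} e^{i\ell r}$, and the $s$-Fourier support of $\widetilde{Y}(s,s-\alpha)$ lies in $k \notin \{-1, 0\}$ by the proof of Lemma~\ref{lem:freq1} (with analogous statements for the conjugate factors). Moreover, when an individual factor has power $\geq 2$, Lemmas~\ref{lem:freq1} and \ref{lem:freq2} directly furnish the $(1+t)^{4/3}$ decay at $k \in \{0,1\}$. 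Writing $e_j$ as an iterated convolution $\sum_{k_1+\cdots+k_r=j} b_{k_1}^{(1)}\cdots b_{k_r}^{(r)}$ with $r = m+n \geq 2$, these two facts together force each surviving term at $|j| \leq 1$ to pick up $(1+t)^{-4/3}$ decay: either from a single low-frequency factor of power $\geq 2$ (contributing $(1+t)^{-4/3}$ directly), or from the product of at least two factors with $|k_i| \geq 1$ (automatic when all individual powers equal $1$, since the $k_i = 0$ contributions vanish and $\sum k_i = j$ with $|j|\leq 1$ and $r\geq 2$ forces at least two nonzero $k_i$), each contributing $(1+|k_i|t)^{-2/3}$ via the $N$ norm. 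A high-high-to-low dyadic summation, analogous to \eqref{newB11} in the proof of Lemma~\ref{lem:freq0}, then closes the estimate.
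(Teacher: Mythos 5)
Your proof is correct and follows essentially the same route as the paper: Fourier-expand the factors using Lemmas~\ref{lem:freq1}--\ref{lem:freq2} and Remark~\ref{rem:freq1}, integrate out the $\alpha$-dependence via Lemma~\ref{introLemma} (noting $|I_k|=1/2$), read off the coefficients $B_j$ as a convolution, and apply Lemma~\ref{lem:freq0}; the only cosmetic difference is that the paper first groups all $Y'$-type factors into one block $b_k^{(n)}$ and all $\widetilde{Y}$-type factors into another block $c_\ell^{(m)}(\alpha)$ before convolving, whereas you expand each single power separately and take an iterated convolution of $m+n$ sequences, which is interchangeable. Your explicit treatment of the enhanced $(1+t)^{4/3}$ decay for $|j|<2$---the vanishing of the $k_i=0$ mode in each single-power factor forces a high-high-to-low interaction, which the bound \eqref{newB11} from the proof of Lemma~\ref{lem:freq0} then controls---fills in a step the paper leaves implicit (its proof simply says ``an application of Lemma~\ref{lem:freq0} concludes the proof''); the only small slip is that the conjugate factors have vanishing $s$-Fourier modes at $k\in\{0,1\}$ rather than $\{-1,0\}$, which still yields $|k_i|\ge 1$ and does not affect the argument.
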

	\begin{proof}
		We use the decompositions into Fourier series from Lemmas~\ref{lem:freq1}-\ref{lem:freq2} and take Remark~\ref{rem:freq1} into consideration. By defining the pair $(m,n):=(m_1+m_2,n_1+n_2)$, we now consider the nonlinear term
		\begin{equation}\label{freq24}
		\begin{split}
		\mathcal{N}_g(s)&\!=\!\!\int_{\T}\!e^{-i\alpha}\big[e^{-i(s-\alpha)}Y'(s-\alpha) \big]^{n_1}\big[e^{i(s-\alpha)}\overline{Y'}(s-\alpha) \big]^{n_2}\big[\widetilde{Y}(s,s-\alpha)\big]^{m_1}\big[\overline{\widetilde{Y}}(s,s-\alpha)\big]^{m_2}\frac{e^{is}e^{-i\frac{\alpha}{2}}}{2\sin(\frac{\alpha}{2})}d\alpha\\
		&=\int_{\T}e^{-i\frac{3}{2}\alpha}\big(\sum_{k\in\Z}b_{k}^{(n)}e^{iks}\big)\big(\sum_{\ell\in\Z}c_{\ell}^{(m)}(\alpha)e^{i\ell s} \big)\frac{e^{is}}{2\sin(\frac{\alpha}{2})}d\alpha\\
		&=\sum_{k,\ell,p\in\Z}b_{k}^{(n)}a_{p,\ell}^{(m)}e^{i(k+\ell+1)s} \int_{\T}\frac{e^{-i\frac{\alpha}{2}}}{2\sin(\frac{\alpha}{2})}e^{-i(k+1-p)\alpha}d\alpha\\
		&=2\pi \sum_{k,\ell,p\in\Z}b_{k}^{(n)}a_{p,\ell}^{(m)}e^{i(k+\ell+1)s}\big(-\frac{i}{2}\sgn(k+1-p)\big),		
		\end{split}
		\end{equation}
		where we used Lemma~\ref{introLemma} in the last line. We now write $\mathcal{N}_g$ as
		\begin{equation*}
		\mathcal{N}_g(s)=\sum_{j\in\Z}B_je^{isj},
		\end{equation*}
		where for $j=k+\ell+1$ we set
		\begin{equation*}
		B_j=-i\pi\sum_{\substack{k,\ell,p\in\Z\\ j=k+\ell+1}}b_{k}^{(n)}a_{p,\ell}^{(m)}\sgn(k+1-p)
		\end{equation*}
		and define
		\begin{equation*}
		c_\ell:=\sum_{p\in\Z}|a_{p,\ell}|,
		\end{equation*}
		to get
		\begin{equation*}
		|B_j|\lesssim\sum_{k\in\Z}|b_{k}^{(n)}||c_{j-k-1}^{(m)}|.
		\end{equation*}
		An application of Lemma~\ref{lem:freq0} now concludes the proof.
	\end{proof}
	
	\begin{lemma}\label{lem:freq4}
		Let $\widetilde{Y}(s,s-\alpha)$ be defined as in \eqref{freq18.1}, let $Y\in W$ and $\mathcal{N}(t,s)$ be defined as in \eqref{freq7}. Then for $T$ analytic we have
		\begin{equation}\label{freq25}
		\mathcal{N}(t,s)-\mathcal{N}_0(t,s)-\mathcal{N}_1(t,s)=\sum_{k\in\Z}\mathcal{L}_k(t)e^{iks}\qquad\text{where}\qquad\|\{\mathcal{L}_k\}_{k\in\Z}\|_N\lesssim_2\|Y\|_W^2.
		\end{equation}
		Moreover, for $|k|<2$ we have
		\begin{equation}\label{freq26}
		\sup_{t\in[0,\infty)}|\mathcal{L}_k(t)|(1+t)^{4/3}\lesssim_{2}\|Y\|_W^2.
		\end{equation}
	\end{lemma}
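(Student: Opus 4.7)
The plan is to expand the integrand in \eqref{intro8} into an absolutely convergent multilinear series in $Y'$, $\overline{Y'}$, $\widetilde Y$ and $\overline{\widetilde Y}$, in which every summand fits the template of Lemma~\ref{lem:freq3}, and then to identify $\mathcal{N}_0+\mathcal{N}_1$ with the terms of total order $0$ and $1$, so that the remainder is an absolutely convergent sum of terms of order $m+n\geq 2$.

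First I would substitute $X'(r)=ia_1 e^{ir}+Y'(r)$ and $\widetilde X(s,r)=\widetilde Y(s,r)-ia_1$ into \eqref{intro8}, obtaining
\begin{equation*}
1-ie^{-ir}X'(r)=(1+a_1)\Big(1-\tfrac{ie^{-ir}Y'(r)}{1+a_1}\Big),\qquad 1+i\widetilde X(s,r)=(1+a_1)\Big(1+\tfrac{i\widetilde Y(s,r)}{1+a_1}\Big).
\end{equation*}
Since $\|Y\|_W\ll 1$ and $|a_1|\ll1$, the two factors $ie^{-ir}Y'(r)/(1+a_1)$ and $i\widetilde Y(s,r)/(1+a_1)$ are small in the relevant topology, so I expand $(1-ie^{-ir}Y'/(1+a_1))^2$ as a polynomial, and $(1+i\widetilde Y/(1+a_1))^{-2}\cdot (1+i\widetilde Y/(1+a_1))=(1+i\widetilde Y/(1+a_1))^{-1}$ as a geometric series. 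For the tension factor, I would write $|1+a_1-ie^{-ir}Y'(r)|^2=(1+a_1-ie^{-ir}Y')(1+\overline{a_1}+ie^{ir}\overline{Y'})$, expand this in powers of $Y',\overline{Y'}$ and, using the analyticity of $T$ around $|1+a_1|$, expand $T(|\cdot|)$ into its Taylor series. The $\Re[\cdot]$ is harmless: it merely adds the complex conjugate of the preceding factor, which contributes terms with $\overline{\widetilde Y}$ and $\overline{Y'}$ still fitting the template of Lemma~\ref{lem:freq3}.

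After this expansion, the integrand becomes a norm-convergent sum of building blocks of the form
\begin{equation*}
C_{m_1,m_2,n_1,n_2}(a_1)\,e^{-i\alpha}\bigl[e^{-i(s-\alpha)}Y'(s-\alpha)\bigr]^{n_1}\bigl[e^{i(s-\alpha)}\overline{Y'}(s-\alpha)\bigr]^{n_2}\bigl[\widetilde Y(s,s-\alpha)\bigr]^{m_1}\bigl[\overline{\widetilde Y}(s,s-\alpha)\bigr]^{m_2}\tfrac{e^{is}e^{-i\alpha/2}}{2\sin(\alpha/2)},
\end{equation*}
with constants $C_{m_1,m_2,n_1,n_2}(a_1)$ bounded uniformly in $a_1$ for $|a_1|$ small. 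The terms with $m_1+m_2+n_1+n_2=0$ reconstruct $\mathcal{N}_0$ and those with $m_1+m_2+n_1+n_2=1$ reconstruct $\mathcal{N}_1$, as computed in \eqref{freq8}--\eqref{freq13}. Hence the difference $\mathcal{N}-\mathcal{N}_0-\mathcal{N}_1$ is the sum of those building blocks with $m+n\geq 2$, and Lemma~\ref{lem:freq3} applies to each, giving Fourier coefficients $B_j^{(m_1,m_2,n_1,n_2)}$ satisfying $\|\{B_j\}_{j\in\Z}\|_N\lesssim_{m+n}\|Y\|_W^{m+n}$, together with the enhanced bound $(1+t)^{4/3}$ for $|j|<2$.

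It remains to sum the series. Setting $\mathcal{L}_k:=\sum_{m+n\geq 2} C_{m_1,m_2,n_1,n_2}(a_1)\,B_k^{(m_1,m_2,n_1,n_2)}$, the estimate $\|\{B_k\}_{k\in\Z}\|_N\lesssim_{m+n}\|Y\|_W^{m+n}$ combined with the smallness $\|Y\|_W\ll 1$ produces a geometric series whose sum is $O(\|Y\|_W^2)$, establishing \eqref{freq25}; the $(1+t)^{4/3}$ bound for $|k|<2$ follows by summing the analogous low-mode estimate from Lemma~\ref{lem:freq3}, yielding \eqref{freq26}. The main obstacle is bookkeeping: one must verify that the implicit constants in Lemma~\ref{lem:freq3} depend polynomially (or at worst factorially compensated by the Taylor coefficients of $T$) on $m+n$, so that the geometric smallness $\|Y\|_W^{m+n}$ defeats them and the $N$-norm series converges unconditionally; the analyticity of $T$ is exactly what ensures this.
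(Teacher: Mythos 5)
Your proposal is correct and follows essentially the same approach as the paper's (very terse) proof: Taylor-expand the denominator and the analytic tension $T$ around the equilibrium, apply Lemma~\ref{lem:freq3} to each multilinear block of order $m+n\geq 2$, and sum using the smallness of $\|Y\|_W$. The bookkeeping concern you flag about the $m+n$-dependence of the implicit constants is the right thing to check; it resolves because those constants arise from iterating Lemma~\ref{lem:freq0} and hence grow at most geometrically, so they are absorbed by $\|Y\|_W^{m+n}$ for $\|Y\|_W$ small.
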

	
	\begin{proof}
		After performing a Taylor expansion for the denominator and for the analytic function $T$ in \eqref{freq18.1}, the result follows immediately upon summing over $n_1,n_2,m_1$ and $m_2$ in Lemma~\ref{lem:freq3}.
	\end{proof}

	\section{Duhamel's formula and the fixed point argument}\label{sect:duhamel}
	We will apply Duhamel's formula to mode at a time. To this end, we denote by $\mathcal{N}^{(m)}$ the nonlinearity associated to the mode $m$.
	We hence get
	\begin{equation}\label{duhamel1}
	\partial_ta_m(t)-c_m(t)=\mathcal{L}_m(t),
	\end{equation}
	where $c_m(t)$ is defined in \eqref{freq13} and $\mathcal{L}_m$ as in Lemma~\ref{lem:freq4}.
	The idea is the following: for $m=0,1$, we show that the nonlinearity decays at a rate better than $1/t$. For $m\neq0,1$, we have less decay but we have the added decay from the parabolicity which we can take advantage of in order to close the argument.
	
	We now consider the various cases.

	\subsection{The modes $m=0,1$}
	For $m=0,1$, we already saw that $c_m(t)=0$. Hence we get
	\begin{equation*}
	\begin{split}
	&\partial_ta_0=\mathcal{L}_0(t)\\
	&\partial_ta_1=\mathcal{L}_1(t).
	\end{split}
	\end{equation*}
	This implies that
	\begin{equation*}
	\begin{split}
	&a_0(t)-a_0(0)=\int_{0}^{t}\mathcal{L}_0(\tau)\,d\tau,\\
	&a_1(t)-a_1(0)=\int_{0}^{t}\mathcal{L}_1(\tau)\,d\tau.
	\end{split}
	\end{equation*}
	From \eqref{freq26} in Lemma~\ref{lem:freq4}, we see that for $m=0,1$ we have
	\begin{equation}\label{first_freq}
	\sup_{t\in[0,\infty)}|a_m(t)-a_m(0)|\lesssim\int_{0}^t|\mathcal{L}_m(\tau)|\,d\tau\lesssim\int_{0}^t(1+\tau)^{-4/3}\,d\tau\lesssim\|Y\|_W^2.
	\end{equation}
 As a consequence,
 	\begin{equation*}
	\sup_{t\in[0,\infty)}|a_m(t)|\lesssim\varep+\|Y\|_W^2.
	\end{equation*}
In the following, we will denote $A(t)=A(a_1(t))$, $B(t)=B(a_1(t))$ and
\begin{equation}\label{A0B0}
    \begin{aligned}
    A_0=A(a_1(0))=T(|1+a_1(0)|),\qquad B_0=B(a_1(0))=T'(|1+a_1(0)|),
    \end{aligned}
\end{equation}
and also
\begin{equation}\label{A0B0}
    \begin{aligned}
    \widetilde{B}_0=|1+a_1(0)|B(a_1(0)),
    \end{aligned}
\end{equation}
and we notice that \eqref{first_freq} implies that 
\begin{equation}\label{AA0BB0}
    |A(t)-A_0|\lesssim \|Y\|_W^2,\quad |B(t)-B_0|\lesssim \|Y\|_W^2.
\end{equation}

	\subsection{The mode $m=2$}
	For $m=2$, we have 
	\begin{equation*}
	\partial_ta_2(t)=-\frac{A(t)+B(t)|1+a_1(t)|}{4}a_2(t)+\mathcal{L}_2(t).
	\end{equation*}
 Using \eqref{first_freq}-\eqref{AA0BB0}, we write
 	\begin{equation*}
	\partial_ta_2(t)=-\frac{A_0+\widetilde{B}_0}{4}a_2(t)+\widetilde{\mathcal{L}}_2(t),
	\end{equation*}
 with
 \begin{equation*}
	\widetilde{\mathcal{L}}_2(t)=\mathcal{L}_2(t)+\frac{A_0-A(t)+B_0|1+a_1(0)|-B(t)|1+a_1(t)|}{4}a_2(t).
	\end{equation*}

 Estimates \eqref{first_freq} and \eqref{AA0BB0} imply that $\widetilde{\mathcal{L}}_2$ is comparable in size to $\mathcal{L}_2$. Moreover, $A_0+\widetilde{B}_0>0$ follows from the natural conditions \eqref{Tau_cond}, as in  \eqref{cond2}.
From Duhamel's formula, we get
	\begin{equation*}
	a_2(t)=e^{-\frac{1}{4}(A_0+\widetilde{B}_0)t}a_2(0)+\int_{0}^te^{-\frac{1}{4}(t-\tau)(A_0+\widetilde{B}_0)}\widetilde{\mathcal{L}}_2(\tau)\,d\tau
	\end{equation*}

	\subsection{The modes $m\geq3$}
	For $m\geq3$, we get a system of two coupled equations.
	\begin{equation}\label{duhamel2}
	\begin{split}
	\partial_ta_m(t)&=-\frac{1}{8}[2(m-1)A_0+m\widetilde{B}_0]a_m(t)+\frac{1}{8}(m-2)B_0\frac{(1+a_1(0))^2}{|1+a_1(0)|}\overline{a_{2-m}}(t)+\widetilde{\mathcal{L}}_m(t),\\
	\partial_t\overline{a_{2-m}}(t)&=-\frac{1}{8}[2(m-1)A_0+(m-2)\widetilde{B}_0]\overline{a_{2-m}}(t)+\frac{1}{8}mB_0\frac{(1+\overline{a_1}(0))^2}{|1+a_1(0)|}a_m(t)+\widetilde{\overline{\mathcal{L}}}_{2-m}(t),
	\end{split}
	\end{equation}
 where
 	\begin{equation*}
	\begin{split}
	\widetilde{\mathcal{L}}_m(t)&:=\mathcal{L}_m(t)-\frac{2(m-1)(A(t)-A_0)+m(B|1+a_1(t)|-\widetilde{B}_0)}{8}a_m(t)\\
 &\quad+\frac18(m-2)\Big(B(t)\frac{(1+a_1(t))^2}{|1+a_1(t)|}-B_0\frac{(1+a_1(0))^2}{|1+a_1(0)|}\Big)\overline{a_{2-m}}(t),\\
	\widetilde{\overline{\mathcal{L}}}_{2-m}(t)&:=\overline{\mathcal{L}_{2-m}}(t)+\frac{1}{8}m\Big(B(t)\frac{(1+\overline{a_1}(t))^2}{|1+a_1(t)|}-B_0\frac{(1+\overline{a_1}(0))^2}{|1+a_1(0)|}\Big)a_m(t)\\
 &\quad-\frac{1}{8}\Big(2(m-1)(A(t)-A_0)+(m-2)\big(B(t)|1+a_1(t)|-\widetilde{B}_0\big)\Big)\overline{a_{2-m}}(t).
	\end{split}
	\end{equation*}
 
	We denote by $\mathcal{G}$ the matrix
	\begin{equation*}
	\mathcal{G}=-\frac{1}{8}\begin{pmatrix}
	&(2m-2)A_0+m\widetilde{B}_0 &-(m-2)B_0\frac{(1+a_1(0))^2}{|1+a_1(0)|}\\
	&-mB_0\frac{(1+\overline{a_1}(0))^2}{|1+a_1(0)|} &(2m-2)A_0+(m-2)\widetilde{B}_0
	\end{pmatrix},
	\end{equation*}
	and get 
	\begin{equation}\label{eq1}
	\begin{pmatrix}
	a_m(t) \\ \overline{a_{2-m}}(t)
	\end{pmatrix}=e^{\mathcal{G}t}\begin{pmatrix}
	a_m(0) \\ \overline{a_{2-m}}(0)
	\end{pmatrix}+\int_{0}^te^{(t-\tau)\mathcal{G}}\begin{pmatrix}
	\widetilde{\mathcal{L}}_m(\tau) \\ \widetilde{\overline{\mathcal{L}}}_{2-m}(\tau)
	\end{pmatrix}d\tau.
	\end{equation}
	The two eigenvalues of $-8\mathcal{G}$ we find are
	\begin{equation*}
	\lambda_1=2A_0(m-1)\qquad\text{and}\qquad\lambda_2=2(A_0+\widetilde{B}_0)(m-1),
	\end{equation*}
	which are positive for any $m\geq3$ by \eqref{Tau_cond}. Moreover, the matrix $\mathcal{G}$ is hermitian, hence it can be diagonalized by a unitary matrix. Therefore, if we define for $m\geq3$
	\begin{equation*}
	\mathcal{C}_m(t):=\int_{0}^te^{(t-\tau)\mathcal{G}}\begin{pmatrix}
	\widetilde{\mathcal{L}}_m(\tau) \\ \widetilde{\overline{\mathcal{L}}}_{2-m}(\tau)
	\end{pmatrix}d\tau,
	\end{equation*} 
	it follows that
	\begin{equation*}
 \begin{aligned}
     	|\mathcal{C}_m(t)|&\leq \int_{0}^t\bigg|e^{\mathcal{G}(t-\tau)}\begin{pmatrix}
	\widetilde{\mathcal{L}}_m(\tau) \\ \widetilde{\overline{\mathcal{L}}}_{2-m}(\tau)
	\end{pmatrix} \bigg|d\tau\leq\int_{0}^t e^{-\frac{1}{8}\min\{\lambda_1,\lambda_2\}(m-1)(t-\tau)}\Big|\begin{pmatrix}
	\widetilde{\mathcal{L}}_m(\tau) \\ \widetilde{\overline{\mathcal{L}}}_{2-m}(\tau)
	\end{pmatrix}\Big|d\tau,
 \end{aligned}
	\end{equation*}
	Defining $\widetilde{\mathcal{L}}_k(\tau)=0$ for $k=0,1$, we have
	\begin{equation}\label{newB1}
	\begin{split}
	\|\mathcal{C}\|_{W}&\lesssim \sup_{t\in[0,\infty)}\int_0^t\sum_{k\in\Z}|k|e^{-C_0|k|(t-\tau)}|\widetilde{\mathcal{L}}_k(\tau)|\,d\tau\\
	&+\sup_{t\in[0,\infty),\,m\in\Z_+}\int_0^t\sum_{|k|\in[2^{m-1},2^{m+1}]}(1+|k|t)^{2/3}|k|e^{-C_0|k|(t-\tau)}|\widetilde{\mathcal{L}}_k(\tau)|\,d\tau,
	\end{split}
	\end{equation}
 where $C_0=C_0(A_0, \widetilde{B}_0)$, which in particular gives 
 \begin{equation}\label{newB1}
	\begin{split}
	\|\mathcal{C}\|_{W}&\lesssim \sup_{t\in[0,\infty)}\int_0^t\sum_{k\in\Z}|k|(1+(t-\tau)|k|)^{-4}|\widetilde{\mathcal{L}}_k(\tau)|\,d\tau\\
	&+\sup_{t\in[0,\infty),\,m\in\Z_+}(1+2^mt)^{2/3}\int_0^t\sum_{|k|\in[2^{m-1},2^{m+1}]}|k|(1+(t-\tau)|k|)^{-4}|\widetilde{\mathcal{L}}_k(\tau)|\,d\tau.
	\end{split}
	\end{equation}
	Assuming that $\|\widetilde{\mathcal{L}}_k\|_{N}\leq 1$, we would have
	\begin{equation}\label{newB2}
	\begin{split}
	\sum_{k\in\Z}|\widetilde{\mathcal{L}}_k(t)|&\lesssim 1\qquad\text{ for any }t\in[0,\infty),\\
	\sum_{|k|\in[2^{m-1},2^{m+1}]}|\widetilde{\mathcal{L}}_k(t)|&\lesssim (1+2^mt)^{-2/3}\qquad\text{ for any }t\in[0,\infty)\text{ and }m\in\Z_+.
	\end{split}
	\end{equation} 
	It follows from \eqref{newB1} that 
     \begin{equation}\label{newB3}
	\begin{split}
	\|\mathcal{C}\|_{W}&\lesssim \sup_{t\in[0,\infty)}\sum_{a\in\Z_+}2^a\int_0^t(1+(t-\tau)2^a)^{-4}\sum_{|k|\in[2^{a-1},2^{a+1}]}|\widetilde{\mathcal{L}}_k(\tau)|\,d\tau\\
	&+\sup_{t\in[0,\infty),\,m\in\Z_+}(1+2^mt)^{2/3}2^m\int_0^t(1+(t-\tau)2^m)^{-4}\sum_{|k|\in[2^{m-1},2^{m+1}]}|\widetilde{\mathcal{L}}_k(\tau)|\,d\tau
	\\&\lesssim1,
	\end{split}
	\end{equation}
	upon using \eqref{newB2} and considering the usual cases $2^m\leq 1/t$, $2^m\geq 1/t$, $2^a\leq 1/t$, $2^a\geq 1/t$.
	
	Therefore, from \eqref{eq1}, upon summing over $m$ and applying the $W$-norm we see that
	\begin{equation*}
 \|Y\|_W\lesssim\|Y_0\|_W+\|Y\|_W^2\lesssim\varep+\varep^2,
	\end{equation*}
	where here $Y_0$ denotes the free evolution, thus closing the argument.
	
	\section{Wellposedness in space allowing for initial data with corners}\label{sect:corners}
	We now prove wellposedness in a space whose initial data allows for corners. We will use the norms $Z_1$ and $Z_2$ as defined in \eqref{norm2} and \eqref{norm3} respectively.
	
	\subsection{The $Z_1$ norm is an algebra}
	The first case we consider is both functions being of $Y'$-type. We study the model nonlinearity 
	\begin{equation*}
	\mathcal{Q}(Y_1,Y_2)(s,t):=\int_{\T}\frac{e^{is}e^{-i\alpha/2}}{2\sin(\frac{\alpha}{2})}\big[e^{-i(s-\alpha)}Y_1'(s-\alpha)\big]\big[e^{-i(s-\alpha)}Y_2'(s-\alpha)\big]\,d\alpha.
	\end{equation*}
	to get the following lemma.
	\begin{lemma}\label{lem:kernel1}
		Let $Y_1,Y_2\in Z_1$. Then 
		\begin{equation}\label{norm4}
		\|(Y'_1\cdot Y'_2)(t)\|_{Z_1}\lesssim\|Y_1\|_{Z_1}\|Y_2\|_{Z_1}.
		\end{equation}
		Moreover,
		\begin{equation}\label{norm5}
		\sup_{t\geq 0}\sup_{k\in\Z_+}2^{k/2}(1+2^kt)^{2/3}\|P_k[\mathcal{Q}(Y_1,Y_2)(t)]\|_{L^2}\lesssim\|Y_1\|_{Z_1}\|Y_2\|_{Z_1}.
		\end{equation}
	\end{lemma}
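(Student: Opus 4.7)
The two claims are closely linked: the singular kernel $(2\sin(\alpha/2))^{-1}e^{-i\alpha/2}$ appearing in $\mathcal{Q}$ is, by Lemma~\ref{introLemma}, exactly the Fourier representation of the Hilbert-transform-type multiplier $k\mapsto -\tfrac{i}{2}\sgn(k)$. Hence, after factoring $e^{-2i(s-\alpha)}$ into the integrand, $\mathcal{Q}(Y_1,Y_2)(s,t)=e^{is}\, H\!\big(e^{-2i\cdot}Y_1'Y_2'\big)(s,t)$ up to harmless constants, where $H$ is the (circular) Hilbert transform. Since $H$ commutes with each Littlewood-Paley projector $P_k$ and acts boundedly on $L^2$, and since multiplication by $e^{-2is}$ merely shifts the LP index by $O(1)$, estimate \eqref{norm5} reduces to the bound $2^{k/2}(1+2^k t)^{2/3}\|P_k(Y_1'Y_2')\|_{L^2}\lesssim \|Y_1\|_{Z_1}\|Y_2\|_{Z_1}$, which is precisely the frequency part of \eqref{norm4}. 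Thus I only need to prove the algebra estimate \eqref{norm4}.

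For the $L^\infty$ half of the $Z_1$ bound, it suffices to observe that $\|Y_i'\|_{L^\infty}\lesssim (1+t)^{-2/3}\|Y_i\|_{Z_1}$ directly from the definition of $Z_1$, so that $\|Y_1'Y_2'\|_{L^\infty}\lesssim (1+t)^{-4/3}\|Y_1\|_{Z_1}\|Y_2\|_{Z_1}$ — in fact stronger than needed. For the Littlewood-Paley half I would split the product as a standard paraproduct:
\begin{equation*}
Y_1' Y_2' = \sum_{k}\bigl(P_{\le k-3}Y_1'\bigr)\,P_k Y_2' + \sum_{k}P_k Y_1'\bigl(P_{\le k-3}Y_2'\bigr) + \sum_{|j-j'|\le 2} P_j Y_1'\,P_{j'}Y_2',
\end{equation*}
and analyze $P_k$ of each piece. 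In the low-high and high-low interactions the output frequency is concentrated at $\sim 2^k$, and I put the low factor in $L^\infty$ via $\|P_{\le k-3}Y_i'\|_{L^\infty}\lesssim (1+t)^{-2/3}\|Y_i\|_{Z_1}$ and the high factor in $L^2$ via Bernstein: $\|P_k Y_i'\|_{L^2}=2^k\|P_k Y_i\|_{L^2}\lesssim 2^{-k/2}(1+2^k t)^{-2/3}\|Y_i\|_{Z_1}$. This yields the desired bound $\|P_k(\cdot)\|_{L^2}\lesssim 2^{-k/2}(1+2^k t)^{-2/3}\|Y_1\|_{Z_1}\|Y_2\|_{Z_1}$. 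The high-high interaction requires a little more care: one writes $P_k\big(\sum_{j\ge k}P_j Y_1' \cdot P_{\sim j}Y_2'\big)$, pairs one factor in $L^\infty$ (with $\|P_j Y_i'\|_{L^\infty}\lesssim (1+2^j t)^{-2/3}$, again by Bernstein) and the other in $L^2$ (gaining a factor $2^{-j/2}$), and sums the resulting geometric series in $j\ge k$.

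\textbf{Main obstacle.} The trickiest point is the high-high interaction. Naively summing the weaker uniform bound $\|Y_i'\|_{L^\infty}\lesssim (1+t)^{-2/3}$ at every frequency gives a logarithmic loss, while summing the sharper frequency-localized $L^\infty$ bound $\|P_j Y_i'\|_{L^\infty}\lesssim (1+2^j t)^{-2/3}$ on its own also loses a logarithm in $t$ through the plateau region $2^j t\lesssim 1$. Pairing $L^\infty$ with $L^2$ at each dyadic $j$ so that Bernstein generates the extra $2^{-j/2}$ is the crucial maneuver; it produces a geometric series in $j$ that sums to $2^{-k/2}(1+2^k t)^{-4/3}$, exceeding the required decay. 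A secondary technical point, arising when reducing \eqref{norm5} to \eqref{norm4}, is that the principal-value kernel sits on $\T$ rather than $\R$, so its Hilbert-transform character should be read off from the Fourier computation of Lemma~\ref{introLemma} rather than from a real-line integration-by-parts argument; but once this is done the passage between the two estimates is immediate.
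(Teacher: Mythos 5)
Your proof is correct and follows essentially the same route as the paper: same $L^\infty$ bound, same paraproduct decomposition into low-high/high-low/high-high blocks with Bernstein controlling the high-high part, and the same reduction of \eqref{norm5} to \eqref{norm4} via the $L^2$-boundedness of the circular Hilbert transform read off from Lemma~\ref{introLemma}. The only cosmetic difference is in the high-high block, where you place one factor in $L^\infty$ via Bernstein and the other in $L^2$, while the paper applies $L^1\!\to\!L^2$ Bernstein to $P_k$ and then Cauchy--Schwarz — these give the identical bound.
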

	\begin{proof}
		Assume $t\in[0,\infty)$ is fixed. Then
		\begin{equation*}
		(1+t)^{4/3}\|(Y'_1\cdot Y_2')(t)\|_{L^\infty}\lesssim(1+t)^{2/3}\|Y_1'(t)\|_{L^\infty}(1+t)^{2/3}\|Y_2'(t)\|_{L^\infty}\lesssim\|Y_1\|_{Z_1}\|Y_2\|_{Z_1}.
		\end{equation*}
		Moreover, for any $k\in\Z_+$,
		\begin{equation*}
		\begin{split}
		P_k(Y_1'\cdot Y_2')&=P_k(P_{\leq k-4}Y_1'\cdot P_{[k-2,k+2]}Y_2')+P_k(P_{\leq k-4}Y_2'\cdot P_{[k-2,k+2]}Y_1')\\
		&+\sum_{k_1,k_2\geq k-3, |k_1-k_2|\leq6}P_k(P_{k_1}Y_1'\cdot P_{k_2}Y_2').
		\end{split}
		\end{equation*}
		We now estimate, for $t\in[0,\infty)$ and $k\in\Z_+$,
		\begin{equation*}
		\begin{split}
		&2^{k/2}(1+2^kt)^{2/3}\|P_k(Y_1'\cdot Y_2')(t)\|_{L^2}\\	
		&\lesssim2^{k/2}(1+2^kt)^{2/3}\Big\{2^k\|P_{\leq k-4}Y_1'(t)\|_{L^\infty}\|P_{[k-2,k+2]}Y_2(t)\|_{L^2} +2^k\|P_{\leq k-4}Y_2'(t)\|_{L^\infty}\|P_{[k-2,k+2]}Y_1(t)\|_{L^2}
		\\&\qquad +\sum_{k_1,k_2\geq k-3, |k_1-k_2|\leq6}2^{k/2}2^{k_1}\|P_{k_1}Y_1(t)\|_{L^2}2^{k_2}\|P_{k_2}Y_2(t)\|_{L^2}
		\Big\}\\
		&\lesssim \|Y_1\|_{Z_1}\|Y_2\|_{Z_1}+\sum_{k_1,k_2\geq k-3, |k_1-k_2|\leq6}2^k(1+2^kt)^{2/3}2^{-k_1}(1+2^{k_1}t)^{-4/3}\|Y_1\|_{Z_1}\|Y_2\|_{Z_1}\\
		&\lesssim \|Y_1\|_{Z_1}\|Y_2\|_{Z_1}.
		\end{split}
		\end{equation*}
		
		The bounds \eqref{norm5} follow from \eqref{norm4} and the $L^2$ boundedness of the Hilbert transform. 
		
	\end{proof}
\begin{remark}
    Lemma \ref{lem:kernel1} implies that by means of an easy induction argument we can write $\textbf{Y}':=Y_1'\times\dots\times Y_n'$ and $\textbf{Y}'$ will act as a single copy of $Y'$.
\end{remark}
	
	\subsection{Reformulation as a multilinear paraproduct}
	We need to estimate a nonlinearity of the following form
	\begin{equation}\label{hl1}
	\begin{split}
	\mathcal{P}(g_1,\cdots,g_{m},f_1,\cdots,f_{n})(s)&=\int_{\T}\widetilde{g}_1(s,s-\alpha)\cdots \widetilde{g}_m(s,s-\alpha)A(\alpha)\\&\qquad\times f'_1(s-\alpha)\cdots f'_n(s-\alpha)\frac{e^{is}e^{-i\alpha/2}}{2\sin(\alpha/2)}d\alpha.
	\end{split}
	\end{equation}
	where here the kernel $A(\alpha)$ is defined by
	\begin{equation}\label{hl3}
	A(\alpha):=e^{-ir\alpha}\qquad\text{for some }r\in\Z.
	\end{equation}
	The reasoning behind introducing $A(\alpha)$ is to allow for conjugates as well in \eqref{hl1}.
	
	We now define the function
	\begin{equation}\label{H}
	H(s-\alpha):=f_1'(s-\alpha)\cdots f_n'(s-\alpha).
	\end{equation}
	We know from Lemma~\ref{lem:kernel1} that since the $Z_1$ space is an algebra, that $H$ acts as though it were only a single copy $f'$. Moreover, we recall that we can rewrite
	\begin{equation}\label{hl4}
	\widetilde{g}_i(s,s-\alpha)=\frac{e^{-is}e^{-i\alpha/2}\sum_{k_i\neq\{0,1\}}a^{(i)}_{k_i}e^{isk_i}(e^{-i\alpha k_i}-1)}{2\sin(\alpha/2)}.
	\end{equation}
	Hence, we can reformulate \eqref{hl1} as
	\begin{equation}\label{hl5}
	\begin{split}
	\mathcal{P}(g_1,\cdots,g_m,&f_1,\cdots,f_n)(s)=\int_{\T}\frac{e^{is}e^{-i\alpha/2}}{2\sin(\frac{\alpha}{2})}A(\alpha)
	\frac{e^{-is}e^{-i\alpha/2}\sum_{k_1\neq\{0,1\}}\big(a^{(1)}_{k_1}e^{isk_1}(e^{-i\alpha k_1}-1)\big)}{2\sin(\alpha/2)}\times\dots\\
 &\qquad\times\frac{e^{-is}e^{-i\alpha/2}\sum_{k_m\neq\{0,1\}}\big(a^{(n)}_{k_m}e^{isk_m}(e^{-i\alpha k_m}-1)\big)}{2\sin(\alpha/2)} H(s-\alpha)d\alpha\\
	&=\int_{\T}\frac{e^{-ims}e^{-i(m+1)\alpha/2}}{2^{m+1}\sin^{m+1}(\alpha/2)}A(\alpha)H(s-\alpha)\\&\quad\times\sum_{k_1,\cdots,k_m\neq\{0,1\}}a^{(1)}_{k_1}\cdots a^{(m)}_{k_m}e^{is(k_1+\cdots+k_m+1)}(e^{-i\alpha k_1}-1)\cdots(e^{-i\alpha k_m}-1)\,d\alpha. 
	\end{split}
	\end{equation}
	By writing 
	\begin{equation}
	a_{k_i}=\frac{1}{2\pi}\int_{\T}g_i(y_i)e^{-iy_ik_i}dy_i,
	\end{equation}
	we can rewrite the model nonlinearity in physical space to get
	\begin{equation}\label{hl6}
	\begin{split}
	\mathcal{P}(g_1,\cdots,g_m,f_1,\cdots,f_n)(s)&=\frac{1}{(2\pi)^m}\int_{\T}\int_{\T^m}\sum_{k_1,\cdots,k_m\neq\{0,1\}}e^{-ims}e^{i((s-y_1)k_1+\cdots+(s-y_m)k_m+s)}\\
	&\qquad\times(e^{-i\alpha k_1}-1)\cdots(e^{-i\alpha k_m}-1)\frac{e^{-i(m+1)\alpha/2}}{2^{m+1}\sin^{m+1}(\alpha/2)}\\
	&\qquad\times g_1(y_1)\cdots g_m(y_m)A(\alpha)H(s-\alpha)\,dy_1\cdots dy_md\alpha. 
	\end{split}
	\end{equation}
	Upon performing the split
	\begin{equation}
	H(s-\alpha)=P_{\leq k-4m}H(s-\alpha)+ P_{\geq k-4m}H(s-\alpha),
	\end{equation}
	and analyzing the nonlinearity in conjunction with the Littlewood-Paley projections yields
	\begin{equation}\label{hl7}
	\begin{split}
\mathcal{P}_k(P_{k_1}g_1,\cdots,&P_{k_m}g_m,P_{k_{m+1}}f_1,\cdots,P_{k_{n+m}}f_n)(s)\\
 &=\frac{1}{(2\pi)^m}P_k\sum_{k_1,\dots,k_m\in\mathbb{Z}}\int_{\T}\bigg(\int_{\T^m}\frac{e^{is}e^{-ims}e^{-i\alpha/2}}{2\sin(\alpha/2)} P_{k_1}g_1(y_1)\cdots P_{k_m}g_m(y_m)\\
 &\quad\times 
	L_{k_1}(s-y_1,\alpha)\cdots L_{k_m}(s-y_m,\alpha)A(\alpha)\,dy_1\cdots dy_m\\
 &\quad\times \big[P_{\leq k-4m}H(s-\alpha)+ P_{\geq k-4m}H(s-\alpha)\big]\bigg)\,d\alpha, 
	\end{split}
	\end{equation}
	where here
	\begin{equation}\label{Ldef}
	L_n(s,\alpha):=\sum_{k\neq\{0,1\}}\varphi_{n+2}(k)\frac{e^{-i\alpha/2}(1-e^{-i\alpha k})}{2\sin(\alpha/2)}e^{isk}.
	\end{equation}

	\subsection{Analysis of the kernels}
	In order to get a better understanding of the the paraproduct, we begin by studying the kernels.
  We first rewrite $L_n$ in a more convenient form. We will use the Poisson summation formula which tells us that if $f$ is a Schwartz function, then 
		\begin{equation}
		\sum_{k\in\Z}f(k)=\sum_{k\in\Z}\widehat{f}(2\pi k).
		\end{equation}
		In our case $f(x)=\varphi_n(x)e^{is x}(1-e^{-i\alpha x})$. Hence we have
		\begin{equation}
		\begin{split}
		\widehat{f}(2\pi k)&=\int_{\R}f(x)e^{-i2\pi k x}dx\\
		&=\int_{\R}\varphi_n(x)e^{is x}(1-e^{-i\alpha x})e^{-i2\pi kx}\,dx\\
		&=2^n\int_{\R}\Big[\varphi_0(y)e^{i2^ny(s-2\pi k)}-\varphi_0(y)e^{i2^ny(s-2\pi k-\alpha)} \Big]dy\\
		&=:2^n\Big[\phi_0(2^n(s-2\pi k))-\phi_0(2^n(s-2\pi k-\alpha)) \Big],
		\end{split}
		\end{equation}
  where we remark that $\phi_0$ is a Schwartz function.
Let us denote
\begin{equation}\label{psi_def}
    \begin{aligned}
    \psi_n(s):=\sum_{k\neq1}\varphi_{n+2}(k)e^{isk}=\sum_{k\in\mathbb{Z}}\varphi_{n+2}(k)e^{isk}-\varphi_{n+2}(1)e^{is}.
    \end{aligned}
\end{equation}
Thus,  for $k\neq1$, $\widehat{\psi}_n(k)=\varphi_{n+2}(k)$.
By the Poisson formula,
\begin{equation*}
    \begin{aligned}
        \psi_n(s)=2^{n+2}\sum_{k\in\mathbb{Z}}\phi_0(2^{n+2}(s-2\pi k))-\varphi_{n+2}(1)e^{is}.
    \end{aligned}
\end{equation*}
Then
\begin{equation}\label{Ln_psi}
    \begin{aligned}
        L_n(s,\alpha)&=\frac{e^{-i\alpha/2}}{2 \sin(\alpha/2)}\big(\psi_n(s)-\psi_n(s-\alpha)\big),
    \end{aligned}
\end{equation}
where 
\begin{equation*}
    \begin{aligned}
        \int_{\mathbb{T}}|\psi_n(s)|ds&\lesssim 2^n\sum_{k\in\mathbb{Z}}\int_{\mathbb{T}}|\phi_0(2^{n+2}(s-2\pi k))|ds\\
        &=2^n\sum_{k\in\mathbb{Z}}\int_{-2\pi k}^{2\pi(1-k)}|\phi_0(2^{n+2} w)|dw =\int_{\mathbb{R}}|\phi_0(y)|dy\lesssim1,
    \end{aligned}
\end{equation*}
and similarly for $m\geq0$,
\begin{equation}\label{psi_bounds}
    \begin{aligned}
        \int_{\mathbb{T}}|\psi_n^{m)}(s)|ds\lesssim 2^{mn}.
    \end{aligned}
\end{equation}
The expression \eqref{Ln_psi} and the integrability bound \eqref{psi_bounds} will be frequently use throughout the paper.

 Next, we get the following two lemmas for the kernels.
	\begin{lemma}\label{lem:kernel_1}
		The following bound holds:
		\begin{equation}
		\int_{\T}|L_n(s,\alpha)|ds\lesssim\min\{2^n,|\alpha|^{-1} \}.
		\end{equation}
	\end{lemma}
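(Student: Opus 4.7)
The plan is to work directly from the explicit representation
\begin{equation*}
L_n(s,\alpha)=\frac{e^{-i\alpha/2}}{2\sin(\alpha/2)}\bigl(\psi_n(s)-\psi_n(s-\alpha)\bigr)
\end{equation*}
given in \eqref{Ln_psi}, and combine it with the integrability estimates \eqref{psi_bounds} for $\psi_n$ and its derivatives. The bound $|\sin(\alpha/2)|^{-1}\lesssim |\alpha|^{-1}$ for $\alpha\in\T$ is standard, so everything reduces to estimating $\int_{\T}|\psi_n(s)-\psi_n(s-\alpha)|\,ds$ in two different ways, each one sharp in a different regime of $\alpha$.

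For the estimate $\int_{\T}|L_n(s,\alpha)|\,ds \lesssim |\alpha|^{-1}$ (which will be the dominant one when $|\alpha|\gtrsim 2^{-n}$), I would apply the triangle inequality $|\psi_n(s)-\psi_n(s-\alpha)|\leq |\psi_n(s)|+|\psi_n(s-\alpha)|$, integrate in $s$ (the second integral being translation-invariant), and use \eqref{psi_bounds} with $m=0$ to get $\int_{\T}|\psi_n(s)-\psi_n(s-\alpha)|\,ds\lesssim 1$. Dividing by $|\sin(\alpha/2)|\sim |\alpha|$ yields the claimed bound.

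For the estimate $\int_{\T}|L_n(s,\alpha)|\,ds \lesssim 2^n$ (which is sharper when $|\alpha|\lesssim 2^{-n}$), I would instead exploit the cancellation in the difference by writing
\begin{equation*}
\psi_n(s)-\psi_n(s-\alpha)=\int_0^{\alpha}\psi_n'(s-\beta)\,d\beta,
\end{equation*}
so that Fubini and \eqref{psi_bounds} with $m=1$ give
\begin{equation*}
\int_{\T}|\psi_n(s)-\psi_n(s-\alpha)|\,ds\leq |\alpha|\sup_{\beta}\int_{\T}|\psi_n'(s-\beta)|\,ds\lesssim |\alpha|\,2^n.
\end{equation*}
Dividing by $|\sin(\alpha/2)|\sim |\alpha|$ produces the $2^n$ bound. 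Taking the minimum of the two estimates gives the lemma. There is no real obstacle here: the only subtlety is that $\psi_n$ is not exactly a Littlewood-Paley piece (one Fourier coefficient at $k=1$ has been removed in \eqref{psi_def}), but the removed contribution $\varphi_{n+2}(1)e^{is}$ is itself a bounded function with bounded derivative, so the integrability estimates \eqref{psi_bounds} absorb it without change.
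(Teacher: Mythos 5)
Your proof is correct and follows essentially the same route as the paper: both use the representation \eqref{Ln_psi}, split into the two regimes $2^n|\alpha|\lessgtr 1$, apply the triangle inequality plus \eqref{psi_bounds} with $m=0$ in the large-$\alpha$ case, and exploit cancellation via the fundamental theorem of calculus plus \eqref{psi_bounds} with $m=1$ in the small-$\alpha$ case. Your parenthetical remark that the subtracted term $\varphi_{n+2}(1)e^{is}$ is harmlessly absorbed into \eqref{psi_bounds} is also accurate (and in fact that term vanishes identically for all but finitely many $n$).
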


	\begin{proof}
	
 We have that, for $2^n|\alpha|<1$,
		\begin{equation}\begin{aligned}
		    \int_{\T}|L_n(s,\alpha)|ds&=\int_{\T}\bigg|\frac{e^{-i\alpha/2}}{2 \sin(\alpha/2)}\big(\psi_n(s)-\psi_n(s-\alpha)\big)\bigg|ds\\
      &\leq\frac{|\alpha|}{|2\sin(\alpha/2)|}\int_0^1\int_{\T}|\psi_n'(s-\alpha(z-1))|dsdz\lesssim2^n,
		\end{aligned}
		\end{equation}
		while that for $2^n|\alpha|\geq1$, we write
		\begin{equation}
		\begin{split}
		\int_{\T}\bigg|\frac{e^{-i\alpha/2}}{2 \sin(\alpha/2)}\big(\psi_n(s)-\psi_n(s-\alpha)\big)\bigg|ds&\leq\bigg|\frac{e^{-i\alpha/2}}{2 \sin(\alpha/2)}\bigg|\int_{\T}\big(|\psi_n(s)|+|\psi_n(s-\alpha)|\big)ds\\
		&\lesssim |\alpha|^{-1},
		\end{split}
		\end{equation}
  where in both cases  we used the estimate \eqref{psi_bounds} in the last step.
	\end{proof}
	
	These bounds are not enough and lead to logarithmic losses, hence we introduce the modified kernel $\widetilde{L}_n$ defined by
	\begin{equation}\label{Ltilde}
	\widetilde{L}_n(s,\alpha):=L_n(s,\alpha)-\frac{e^{-i\alpha/2}}{2\sin(\alpha/2)}\min\{1,2^n\alpha\}2^{-n}\psi_n'(s).
	\end{equation}
 
	Then we get the following bounds.
	
	\begin{lemma}\label{lem:kernel2}
		The following bounds hold:
		\begin{equation}
		\begin{split}
		\int_{\T}|\widetilde{L}_n(s,\alpha)|ds\lesssim\min\{2^{2n}|\alpha| ,|\alpha|^{-1}\}
		\end{split}
		\end{equation}
		Moreover, we have
		\begin{equation}\label{tildebound}
		\int_{\T}\bigg|\frac{d}{d\alpha}\widetilde{L}_n(s,\alpha) \bigg|ds\lesssim\min\{2^{2n},|\alpha|^{-2}\}.
		\end{equation}
	\end{lemma}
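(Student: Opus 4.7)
The plan is a dyadic case split on the size of $2^n|\alpha|$, exploiting the cancellation built into $\widetilde L_n$. First I would rewrite
\[
\widetilde L_n(s,\alpha)=K(\alpha)\big[\psi_n(s)-\psi_n(s-\alpha)-M(\alpha)\psi_n'(s)\big],
\]
with $K(\alpha):=e^{-i\alpha/2}/(2\sin(\alpha/2))$, $M(\alpha):=\min\{1,2^n\alpha\}\,2^{-n}$, and record the pointwise bounds $|K(\alpha)|\lesssim|\alpha|^{-1}$, $|K'(\alpha)|\lesssim|\alpha|^{-2}$ together with $\|\psi_n^{(m)}\|_{L^1(\T)}\lesssim 2^{mn}$ from \eqref{psi_bounds}.

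In the regime $2^n|\alpha|\leq 1$ one has $M(\alpha)=\alpha$, and Taylor's theorem with integral remainder gives
\[
\psi_n(s)-\psi_n(s-\alpha)-\alpha\psi_n'(s)=-\int_0^\alpha(\alpha-r)\psi_n''(s-r)\,dr,
\]
whose $L^1(ds)$ norm is bounded by $\tfrac12\alpha^2\|\psi_n''\|_{L^1}\lesssim 2^{2n}\alpha^2$. Combined with $|K|\lesssim|\alpha|^{-1}$, this yields the first bound $\int|\widetilde L_n|\,ds\lesssim 2^{2n}|\alpha|$. Since $M'\equiv 1$ here,
\[
\partial_\alpha\widetilde L_n=K'(\alpha)[\psi_n(s)-\psi_n(s-\alpha)-\alpha\psi_n'(s)]+K(\alpha)[\psi_n'(s-\alpha)-\psi_n'(s)],
\]
and the first piece integrates to $\lesssim|\alpha|^{-2}\cdot 2^{2n}\alpha^2=2^{2n}$ by the Taylor remainder, while the second, using $\psi_n'(s-\alpha)-\psi_n'(s)=-\int_0^\alpha\psi_n''(s-r)\,dr$, integrates to $\lesssim|\alpha|^{-1}\cdot|\alpha|\cdot 2^{2n}=2^{2n}$.

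In the regime $2^n|\alpha|>1$, $M(\alpha)=\sgn(\alpha)\,2^{-n}$ is locally constant, so $M'\equiv 0$. A triangle inequality on the three terms inside the bracket gives $\int|\widetilde L_n|\,ds\lesssim|\alpha|^{-1}(\|\psi_n\|_{L^1}+\|\psi_n\|_{L^1}+2^{-n}\|\psi_n'\|_{L^1})\lesssim|\alpha|^{-1}$, and the $K'(\alpha)$-contribution to $\partial_\alpha\widetilde L_n$ is similarly controlled by $|\alpha|^{-2}$.

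The delicate remaining step, which I expect to be the main obstacle, is the residual term $K(\alpha)\psi_n'(s-\alpha)$ of $\partial_\alpha\widetilde L_n$ in this second regime: a crude bound gives only $|\alpha|^{-1}2^n$, falling short of the target $|\alpha|^{-2}$ by a factor $2^n|\alpha|$. To recover the sharp decay I would exploit the explicit Poisson-summation representation \eqref{Ln_psi}, which realizes $\psi_n$ as a Schwartz bump of width $2^{-n}$ about the origin; since the translated bumps $\psi_n(s-\alpha)$ and $\psi_n(s)$ are essentially disjoint when $|\alpha|>2^{-n}$, a finer regrouping of the pieces of $\partial_\alpha\widetilde L_n$ — combining it with the $K'(\alpha)[\psi_n(s)-\psi_n(s-\alpha)]$ contribution via the identity $K(\alpha)\psi_n'(s-\alpha)=-\partial_\alpha[K(\alpha)\psi_n(s-\alpha)]+K'(\alpha)\psi_n(s-\alpha)$ and using the refined estimate $\|\psi_n'(\cdot-\alpha)-\psi_n'\|_{L^1(\T)}\lesssim\min\{2^n,|\alpha|\cdot 2^{2n}\}$ — should produce the required $|\alpha|^{-2}$ bound.
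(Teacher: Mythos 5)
Your treatment of the first estimate and of the $2^n|\alpha|\leq 1$ regime for the derivative estimate is correct and follows the paper's own argument: rewrite $\widetilde L_n=K(\alpha)[\psi_n(s)-\psi_n(s-\alpha)-\alpha\psi_n'(s)]$ there, use the Taylor remainder with $\|\psi_n''\|_{L^1(\T)}\lesssim 2^{2n}$, and invoke $|K|\lesssim|\alpha|^{-1}$, $|K'|\lesssim|\alpha|^{-2}$. You also correctly single out the term $K(\alpha)\psi_n'(s-\alpha)$ in the regime $2^n|\alpha|>1$ as the one a crude bound does not control.

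Your proposed fix for that term, however, does not close the gap. The identity $K(\alpha)\psi_n'(s-\alpha)=-\partial_\alpha[K(\alpha)\psi_n(s-\alpha)]+K'(\alpha)\psi_n(s-\alpha)$ is just the Leibniz rule rearranged and reintroduces the very $\alpha$-derivative you are trying to estimate, so the step is circular. The refined estimate $\|\psi_n'(\cdot-\alpha)-\psi_n'\|_{L^1(\T)}\lesssim\min\{2^n,|\alpha|2^{2n}\}$ saturates at $2^n$ precisely once $2^n|\alpha|>1$, giving no additional decay in $\alpha$ in the regime where it is needed. Indeed, since the estimate is in $L^1(ds)$ with absolute values, $\int_\T|K(\alpha)\psi_n'(s-\alpha)|\,ds=|K(\alpha)|\,\|\psi_n'\|_{L^1(\T)}\approx|\alpha|^{-1}2^n$, and no other piece of $\partial_\alpha\widetilde L_n$ can offset this. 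What the direct estimate actually yields for $2^n|\alpha|>1$ is $2^n|\alpha|^{-1}$, so the bound one can prove is $\min\{2^{2n},\,2^n|\alpha|^{-1}\}$ rather than $\min\{2^{2n},\,|\alpha|^{-2}\}$. Note the paper's own proof is equally terse here, deferring to ``direct estimation for $2^n|\alpha|>1$'' without addressing this term; and, as far as I can see, \eqref{tildebound} is not invoked elsewhere in the paper (only the first bound of the lemma and Lemma~\ref{lem_est5} are used later), so this is a point to flag rather than a defect that propagates. Still, as a proof of the lemma as stated, your argument for the second inequality in the regime $2^n|\alpha|>1$ is incomplete and the sketched remedy would not succeed.
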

	
	\begin{proof}
		For $2^n|\alpha|\geq1$, the bounds simply follow from the definition of $\widetilde{L}_n$, Lemma~\ref{lem:kernel_1} and the integrability bound on $\psi_n'$ \eqref{psi_bounds}. For $2^n|\alpha|\leq1$, we have
		\begin{equation*}
		\widetilde{L}_n(s,\alpha)=\frac{e^{-i\alpha/2}}{2\sin(\alpha/2)}\big(\psi_n(s)-\psi_n(s-\alpha)-\alpha\psi_n'(s)\big),
		\end{equation*}
		which yields
		\begin{equation}
		\begin{split}
		\bigg|\frac{e^{-i\alpha/2}}{2\sin(\alpha/2)}\bigg|\int_{\T}\big|\psi_n(s)-\psi_n(s-\alpha)&-\alpha\psi_n'(s)\big|ds\\
		&\leq\frac{1}{|2\sin(\alpha/2)|}\int_{\mathbb{T}}\bigg|\int_{s-\alpha}^{s}\big(\psi_n'(y)-\psi_n'(s) \big)dy \bigg|ds\\
		&\leq \frac{1}{2|\sin(\alpha/2)|}\int_{\T}\int_{s-\alpha}^{s}\int_{s}^{y}|\psi_n''(q)|\,dqdyds\\
		&\lesssim2^{2n}|\alpha|.
		\end{split}
		\end{equation}
		Finally, we calculate 
		\begin{equation}
		\begin{split}
		\frac{d}{d\alpha}\widetilde{L}_n(s,\alpha)&=\frac{-i}{2}\frac{e^{-i\alpha/2}}{2\sin(\alpha/2)}\big(\psi_n(s)-\psi_n(s-\alpha)-\alpha\psi_n'(s)\big)\\
		&+\frac{e^{-i\alpha/2}}{2\sin(\alpha/2)}\big(\psi_n'(s-\alpha)-\psi_n'(s)\big)\\
		&-\frac{e^{-i\alpha/2}\cos(\alpha/2)}{4\sin^2(\alpha/2)}\big(\psi_n(s)-\psi_n(s-\alpha)-\alpha\psi_n'(s))\big)
		\end{split}
		\end{equation}
		which by similar argument as above for $2^n|\alpha|<1$ and direct estimation for $2^n|\alpha|>1$ yields
		\begin{equation}
		\int_{\T}\bigg|\frac{d}{d\alpha}\widetilde{L}_n(s,\alpha)\bigg|ds\lesssim\min\{{2^{2n},|\alpha|^{-2}}\}.
		\end{equation}	
	\end{proof}
	
	\section{Useful estimates}\label{sect:estimates}
	We use this section to collect estimates which will serve as the building blocks for the argument. We first state the following identity for $g\in Z_1$ and $k\in\mathbb{N}$
	\begin{equation}\label{identity1}
 P_{k}g(s)=\int_{\T}P_{k}g(y)\psi_k(s-y)\,dy,
	\end{equation}
which follows trivially since $\widehat{\psi}_k(m)=\varphi_{k+2}(m)$.
This identity will be used throughout the paper.
	\begin{lemma}\label{lem_est0} 
		Let $g\in Z_1$, $k\in\mathbb{N}$ and $L_{\leq k}(s-y,\alpha)$ be defined as in \eqref{L_sum}. Then we have
		\begin{equation}\label{est0}
		\bigg\|\int_{\T}P_{\leq k}g(y)L_{\leq k}(s-y,\alpha)dy\bigg\|_{L^\infty_s}\lesssim\|P_{\leq k}g'\|_{L^\infty}.
		\end{equation}
	\end{lemma}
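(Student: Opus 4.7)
The plan is to work directly from the formula \eqref{Ln_psi} for each $L_n$, summed over $n\leq k$, and convert the $y$-integral into a convolution against $\psi_{\leq k}:=\sum_{n\leq k}\psi_n$. Writing
\[
L_{\leq k}(s-y,\alpha)=\frac{e^{-i\alpha/2}}{2\sin(\alpha/2)}\big(\psi_{\leq k}(s-y)-\psi_{\leq k}(s-y-\alpha)\big),
\]
the integral becomes
\[
\int_{\T}P_{\leq k}g(y)L_{\leq k}(s-y,\alpha)\,dy=\frac{e^{-i\alpha/2}}{2\sin(\alpha/2)}\Big[(P_{\leq k}g*\psi_{\leq k})(s)-(P_{\leq k}g*\psi_{\leq k})(s-\alpha)\Big].
\]

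Next I would identify the convolution. Since $\widehat{\psi_n}(m)=\varphi_{n+2}(m)$ for $m\neq 1$ and vanishes at $m=1$, the sum $\sum_{n\leq k}\varphi_{n+2}(m)$ equals $1$ on the Fourier support of $P_{\leq k}g$ (except at $m=1$). Hence
\[
(P_{\leq k}g*\psi_{\leq k})(s)=P_{\leq k}g(s)-c_1 e^{is},\qquad c_1:=\widehat{P_{\leq k}g}(1),
\]
so that, setting $h(s):=P_{\leq k}g(s)-c_1e^{is}$, the quantity to estimate is $\frac{e^{-i\alpha/2}}{2\sin(\alpha/2)}\big(h(s)-h(s-\alpha)\big)$.

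The final step is the mean value estimate: $|h(s)-h(s-\alpha)|\leq |\alpha|\,\|h'\|_{L^\infty}$, combined with the uniform bound $|\alpha/(2\sin(\alpha/2))|\lesssim 1$ for $\alpha\in\T$. Since $\|h'\|_{L^\infty}\leq\|P_{\leq k}g'\|_{L^\infty}+|c_1|$ and $|c_1|=|\widehat{P_{\leq k}g'}(1)|\leq \|P_{\leq k}g'\|_{L^1}\lesssim \|P_{\leq k}g'\|_{L^\infty}$, we conclude \eqref{est0}.

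The only subtlety I anticipate is the correct handling of the $m=1$ exclusion in $\psi_n$, which produces the $c_1 e^{is}$ correction term; once one recognizes that this mode's size is itself controlled by $\|P_{\leq k}g'\|_{L^\infty}$, the argument reduces to a single application of the mean value theorem, independently of the relative sizes of $|\alpha|$ and $2^{-k}$.
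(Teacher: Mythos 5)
Your proof is correct and essentially mirrors the paper's: both reduce the $y$-integral to a convolution with the $\psi$-kernels (the paper applies identity \eqref{identity1} block by block, you sum first and convolve once) and finish with the mean value theorem together with the uniform bound $|\alpha/\sin(\alpha/2)|\lesssim 1$. The refinement you add --- the explicit $c_1e^{is}$ correction from the excluded $m=1$ mode, with $|c_1|\lesssim\|P_{\le k}g'\|_{L^\infty}$ --- is left implicit in the paper, and in fact vanishes in the actual application since the relevant functions carry no $m=0,1$ modes.
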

   
	\begin{proof}
 We split the terms in $L_{\leq k}$,
 \begin{equation*}
    \begin{aligned}
		\int_{\T}P_{\leq k}g(y)L_{\leq k}(s-y,\alpha)dy&=\sum_{b\leq k}\int_{\T}P_{b}g(y)\frac{e^{-i\alpha/2}}{2\sin{(\alpha/2)}}\big(\psi_n(s-y)-\psi_n(s-y-\alpha)\big)dy\\
  &=\sum_{b\leq k}e^{-i\alpha/2}\frac{P_{b}g(s)-P_{b}g(s-\alpha)}{2\sin{(\alpha/2)}}=e^{-i\alpha/2}\frac{P_{\leq k}g(s)-P_{\leq k}g(s-\alpha)}{2\sin{(\alpha/2)}},
  \end{aligned}
\end{equation*}
hence
    \begin{equation*}
    \begin{aligned}
    \big|\big|\int_{\T}P_{\leq k}g(y)L_{\leq k}(s-y,\alpha)dy\big|\big|_{L^\infty_s}&\lesssim \|P_{\leq k}g'\|_{L^\infty}.
  \end{aligned}
\end{equation*}

	\end{proof}

When $2^k|\alpha|\leq1$, Lemma \ref{lem_est1} will not be enough to control logarithmic losses. We need to isolate the dependence on $\alpha$ to use cancellations given by the oddness of the kernels.

 	\begin{lemma}\label{lem_est1}
		Let $g\in Z_1$, $k\in\mathbb{N}$ and $L_{\leq k}(s-y,\alpha)$ be defined as in \eqref{L_sum}. Then, when $2^k|\alpha|\leq1$, we have
  
		\begin{equation}\label{est0}
		\bigg\|\int_{\T}P_{\leq k}g(y)L_{\leq k}(s-y,\alpha)dy-\frac{e^{-i\alpha/2}\alpha}{2\sin(\alpha/2)}P_{\leq k}g'(s)\bigg\|_{L^\infty_s}\lesssim2^{k}|\alpha|\|g'\|_{L^\infty}.
		\end{equation}
	\end{lemma}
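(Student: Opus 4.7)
The plan is to reduce the estimate to a second-order Taylor expansion argument, exploiting the exact identity already derived in the proof of Lemma~\ref{lem_est0}. First, I would recall that when we sum over $b\leq k$ using \eqref{Ln_psi} and the identity \eqref{identity1}, the projections telescope to give
\[
\int_{\T}P_{\leq k}g(y)L_{\leq k}(s-y,\alpha)\,dy=\frac{e^{-i\alpha/2}}{2\sin(\alpha/2)}\bigl(P_{\leq k}g(s)-P_{\leq k}g(s-\alpha)\bigr).
\]
Subtracting the tangential correction term, the quantity to be estimated becomes
\[
\frac{e^{-i\alpha/2}}{2\sin(\alpha/2)}\bigl[P_{\leq k}g(s)-P_{\leq k}g(s-\alpha)-\alpha\,P_{\leq k}g'(s)\bigr].
\]

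Next, I would use the integral form of the second-order Taylor remainder: writing
\[
P_{\leq k}g(s)-P_{\leq k}g(s-\alpha)-\alpha\,P_{\leq k}g'(s)=\int_{s-\alpha}^{s}\!\!\int_{s}^{y} P_{\leq k}g''(q)\,dq\,dy,
\]
the double integral is bounded pointwise in $s$ by $|\alpha|^{2}\|P_{\leq k}g''\|_{L^\infty}$. A standard Bernstein inequality (which holds on $\T$ by the same Poisson-summation argument used for $\psi_n$, exploiting the bound $\int_{\T}|\psi_k''(s)|\,ds\lesssim 2^{2k}$ already recorded in \eqref{psi_bounds}) yields
\[
\|P_{\leq k}g''\|_{L^\infty}\lesssim 2^{k}\|P_{\leq k}g'\|_{L^\infty}\lesssim 2^{k}\|g'\|_{L^\infty}.
\]

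Finally, combining these with the elementary bound $|2\sin(\alpha/2)|^{-1}\lesssim|\alpha|^{-1}$ valid on $\T$ gives
\[
\bigg|\frac{e^{-i\alpha/2}}{2\sin(\alpha/2)}\bigl[P_{\leq k}g(s)-P_{\leq k}g(s-\alpha)-\alpha\,P_{\leq k}g'(s)\bigr]\bigg|\lesssim |\alpha|^{-1}\cdot|\alpha|^{2}\cdot 2^{k}\|g'\|_{L^\infty}=2^{k}|\alpha|\,\|g'\|_{L^\infty},
\]
uniformly in $s$, which is the desired bound. There is no real obstacle here; the only point worth verifying carefully is the Bernstein estimate on $\T$ at the scale $2^k$, which is a direct consequence of \eqref{psi_bounds} applied through the representation \eqref{identity1}. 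The hypothesis $2^{k}|\alpha|\leq 1$ is not actually needed to carry out the estimate itself, but it is the regime in which this refined bound improves on the crude control provided by Lemma~\ref{lem_est0}.
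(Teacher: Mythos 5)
Your proof is correct and uses the same core mechanism as the paper's---a second-order Taylor expansion of $P_{\leq k}g$ in the increment $\alpha$ combined with a Bernstein bound---but you apply it directly to $P_{\leq k}g$ after telescoping, whereas the paper instead decomposes into individual Littlewood-Paley blocks $P_b g$, $b\leq k$, and invokes the modified-kernel estimate $\int_{\T}|\widetilde L_b(s,\alpha)|\,ds\lesssim 2^{2b}|\alpha|$ from Lemma~\ref{lem:kernel2} (which is itself the per-block version of the same Taylor argument), then sums over $b$. Your route is slightly more direct and avoids the intermediate kernel $\widetilde L_b$ at the cost of needing Bernstein's inequality for $P_{\leq k}$ explicitly; the paper's route keeps the per-block kernel estimate available for reuse elsewhere (e.g.\ Lemma~\ref{lem_est4}), which is presumably why that packaging was chosen. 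One minor informality to clean up: the bound $\|P_{\leq k}g''\|_{L^\infty}\lesssim 2^k\|g'\|_{L^\infty}$ does follow from writing $P_{\leq k}g''=g'*\eta_k'$ with $\|\eta_k'\|_{L^1(\T)}\lesssim 2^k$ via Poisson summation, but the reference to \eqref{psi_bounds} concerns the single-block kernel $\psi_k$, not the low-pass kernel $\eta_k=\sum_{b\leq k}\psi_b$; the same argument works, just worth stating. Your remark that the hypothesis $2^k|\alpha|\leq 1$ is not used in the estimate itself is also accurate---the bound is valid for all $\alpha$, and the restriction merely identifies the regime where it beats the crude $O(1)$ bound from Lemma~\ref{lem_est0}.
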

  
	\begin{proof}
		Introducing the correction $\widetilde{L}_n$ \eqref{Ltilde} with $2^k|\alpha|\leq1$, we begin by rewriting
		\begin{equation}\label{est1}
		\begin{split}
		\int_{\T}P_{\leq k}g(y)L_{\leq k}(s-y,\alpha)dy&=\sum_{b\leq k}\int_{\T}P_{b}g(y)L_{b}(s-y,\alpha)dy\\
		&=\sum_{b\leq k}\int_{\T}P_{b}g(y)\bigg(\widetilde{L}_{b}(s-y,\alpha)+\frac{e^{-i\alpha/2}\alpha}{2\sin(\alpha/2)}\psi_n'(s-y)\bigg)dy\\
		&=\frac{e^{-i\alpha/2}\alpha}{2\sin(\alpha/2)}P_{\leq k}g'(s)+\sum_{b\leq k}\int_{\T}P_{b}g(y)\widetilde{L}_{b}(s-y,\alpha)dy,
		\end{split}
		\end{equation}
		where we used an integration by parts in $y$ and the identity \eqref{identity1} in the last line. The bound \eqref{est0} now follow immediately from an application of Lemma~\ref{lem:kernel2},
			\begin{equation}\label{est1}
		\begin{split}
		\|\sum_{b\leq k}\int_{\T}P_{b}g(y)\widetilde{L}_{b}(s-y,\alpha)dy\|_{L^\infty_s}\lesssim \sum_{b\leq k}2^{2b}|\alpha|\|P_{b}g\|_{L^\infty}\lesssim 2^k|\alpha|\sup_{b}\|P_{b}g\|_{L^\infty}.
		\end{split}
		\end{equation}
	\end{proof}

	\begin{lemma}\label{lem_est2}
		Let $f\in Z_1$, $k\in\mathbb{N}$ and the function $H$ be defined as in \eqref{H}. Then the following bound holds:
		\begin{equation}\label{est2}
		\|P_{\leq k}H(s-\alpha)-P_{\leq k}H(s)\|_{L^\infty}\lesssim 2^k|\alpha|\cdot\|P_{\leq k}H(s)\|_{L^\infty}
		\end{equation}
		and
		\begin{equation}\label{est2.5}
		\|P_{ k}H(s-\alpha)-P_{ k}H(s)\|_{L^2}\lesssim 2^k|\alpha|\cdot\|P_{ k}H(s)\|_{L^2}.
		\end{equation}
	\end{lemma}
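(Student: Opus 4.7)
The plan is to reduce both estimates to a Bernstein-type inequality for Littlewood--Paley localized functions and then apply the fundamental theorem of calculus. This is essentially a packaging of two standard facts: a frequency-localized function's derivative is no larger than $2^k$ times the function itself (in any translation-invariant norm), and a function of class $C^1$ has $|f(s-\alpha)-f(s)|\leq |\alpha|\|f'\|$.

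First, I would write, by the fundamental theorem of calculus,
\[
P_{\leq k}H(s-\alpha)-P_{\leq k}H(s)=-\int_0^\alpha (P_{\leq k}H)'(s-\tau)\,d\tau,
\]
so, taking $L^\infty_s$ norms and moving the integral outside,
\[
\|P_{\leq k}H(\cdot-\alpha)-P_{\leq k}H\|_{L^\infty}\leq |\alpha|\,\|(P_{\leq k}H)'\|_{L^\infty}.
\]
The same manipulation together with Minkowski's inequality in $L^2_s$ gives the analogous reduction for \eqref{est2.5} with $\|(P_k H)'\|_{L^2}$ on the right-hand side.

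The remaining task is the Bernstein bound
\[
\|(P_{\leq k}H)'\|_{L^\infty}\lesssim 2^k\|P_{\leq k}H\|_{L^\infty},\qquad \|(P_k H)'\|_{L^2}\lesssim 2^k\|P_k H\|_{L^2}.
\]
For the dyadic block, I would apply the convolution identity \eqref{identity1}, namely $P_k H=P_k H\ast\psi_k$ on $\mathbb T$; differentiating the kernel and using Young's inequality gives
\[
\|(P_k H)'\|_{L^2}\leq \|P_k H\|_{L^2}\,\|\psi_k'\|_{L^1(\mathbb T)}\lesssim 2^k\|P_k H\|_{L^2},
\]
by the integrability estimate \eqref{psi_bounds} with $m=1$. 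For $P_{\leq k}H$, I would aggregate the kernels by setting $\Psi_k:=\sum_{b\leq k}\psi_b$, so that $P_{\leq k}H=P_{\leq k}H\ast\Psi_k$, and then apply the same Poisson-summation argument that produced \eqref{psi_bounds}, now to the cutoff $\sum_{b\leq k}\varphi_{b+2}$. This gives $\|\Psi_k'\|_{L^1(\mathbb T)}\lesssim 2^k$, and Young's inequality $L^\infty\ast L^1\to L^\infty$ closes the estimate.

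I do not expect a serious obstacle here: both \eqref{est2} and \eqref{est2.5} are really Bernstein's inequality in disguise. The only point to verify carefully is the uniform $L^1$ bound on $\Psi_k'$, but this is immediate from Poisson summation since the aggregated cutoff $\varphi_{\leq k+2}$ has exactly the same Schwartz scaling structure as $\varphi_{k+2}$: its kernel lives at scale $2^{-k}$ with the derivative of $L^1(\mathbb T)$ size $\lesssim 2^k$. Note moreover that no algebraic structure of $H$ is used; the lemma holds for an arbitrary bounded (or $L^2$) function, with $H$ simply being the product of Littlewood--Paley derivatives appearing naturally in the paraproduct decomposition.
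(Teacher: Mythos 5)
Your proof is correct and follows essentially the same route as the paper: fundamental theorem of calculus to produce the factor $|\alpha|$, then a Bernstein-type inequality $\|(P_{\leq k}H)'\|\lesssim 2^k\|P_{\leq k}H\|$ to convert the derivative into a factor of $2^k$. The paper states the Bernstein step without proof, whereas you spell out the convolution-with-$\psi_k$ and Young's inequality justification, but the argument is the same.
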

	\begin{proof}
		The bound \eqref{est2} is straightforward:
  \begin{equation}
			\begin{split}
			|P_{\leq k}H(s-\alpha)-P_{\leq k}H(s)|	&\leq |\alpha|\|P_{\leq k}H'\|_{L^\infty}\lesssim 2^k|\alpha|\|P_{\leq k}H\|_{L^\infty}.
			\end{split}
\end{equation}
 Similarly, we have
		\begin{equation}
		\begin{split}
		P_{ k}H(s-\alpha)=\int_{0}^{-\alpha}P_{k}H'(s+\rho)\,d\rho+P_{k}H(s),
		\end{split}
		\end{equation}
		thus concluding the proof of the lemma.
	\end{proof}
 
	\begin{lemma}\label{lem_est3}
		Let $g\in Z_1$, $k\in\mathbb{N}$ and $L_k(s-y,\alpha)$ defined as in \eqref{Ldef}. Then we can estimate
		\begin{equation}\label{est3}
		\bigg\|\int_{\T}\frac{e^{-i\alpha/2}}{2\sin(\alpha/2)}P_kg(y)L_k(s-y,\alpha)dy-P_{k}g'(s)\frac{\alpha e^{-i\alpha}}{4\sin^2(\alpha/2)}\bigg\|_{L^2_s}\lesssim\frac{|\alpha|^2 2^k}{4|\sin^2(\alpha/2)|}\|P_kg'(s)\|_{L^2}.
		\end{equation}
	\end{lemma}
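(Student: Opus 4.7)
\medskip

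\noindent\textbf{Proof proposal.} The plan is to exploit the explicit representation of $L_k$ given in \eqref{Ln_psi} together with the reproducing identity \eqref{identity1} to collapse the integral into a pointwise expression, and then recognize the quantity we are trying to bound as the second-order Taylor remainder of $P_k g$ around the point $s$. Specifically, using \eqref{Ln_psi},
\begin{equation*}
\frac{e^{-i\alpha/2}}{2\sin(\alpha/2)}L_k(s-y,\alpha)
=\frac{e^{-i\alpha}}{4\sin^2(\alpha/2)}\bigl(\psi_k(s-y)-\psi_k(s-y-\alpha)\bigr),
\end{equation*}
so \eqref{identity1} applied to both terms gives
\begin{equation*}
\int_{\T}\frac{e^{-i\alpha/2}}{2\sin(\alpha/2)}P_kg(y)L_k(s-y,\alpha)\,dy
=\frac{e^{-i\alpha}}{4\sin^2(\alpha/2)}\bigl(P_kg(s)-P_kg(s-\alpha)\bigr).
\end{equation*}

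Subtracting the correction term in the statement and factoring out $e^{-i\alpha}/(4\sin^2(\alpha/2))$, it remains to show
\begin{equation*}
\bigl\|P_kg(s)-P_kg(s-\alpha)-\alpha P_kg'(s)\bigr\|_{L^2_s}\lesssim |\alpha|^2\,2^{k}\,\|P_kg'\|_{L^2}.
\end{equation*}
For this I would use the integral form of Taylor's theorem,
\begin{equation*}
P_kg(s)-P_kg(s-\alpha)-\alpha P_kg'(s)=-\int_0^{\alpha}(\alpha-t)\,P_kg''(s-t)\,dt,
\end{equation*}
apply Minkowski's inequality in $s$ to bring the $L^2$ norm inside the $t$-integral, and then invoke Bernstein's inequality, which gives $\|P_kg''\|_{L^2}\lesssim 2^k\|P_kg'\|_{L^2}$ thanks to the frequency localization at scale $2^k$. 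The integration in $t$ contributes $|\alpha|^2/2$, yielding the claimed bound.

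The only delicate point is the regime $2^k|\alpha|\gtrsim 1$, where the Taylor remainder estimate may appear wasteful compared to the trivial triangle-inequality bound $\|P_kg(s)-P_kg(s-\alpha)\|_{L^2}\lesssim 2^{-k}\|P_kg'\|_{L^2}$. However, precisely in this regime one has $|\alpha|^2 2^k\geq 2^{-k}+|\alpha|$, so the Taylor bound still dominates the trivial one and gives the desired inequality uniformly in $\alpha$. Thus the single Taylor-plus-Bernstein estimate handles both regimes, and the main obstacle is really just a bookkeeping check that the prefactor $1/\sin^2(\alpha/2)$ cleanly produces the stated right-hand side.
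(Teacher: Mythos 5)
Your proposal is correct and follows essentially the same route as the paper: both use the $\psi_k$ representation \eqref{Ln_psi} together with the reproducing identity \eqref{identity1} to reduce to a second-order Taylor remainder, then conclude with Minkowski and the Bernstein-type inequality $\|P_kg''\|_{L^2}\lesssim 2^k\|P_kg'\|_{L^2}$. The only cosmetic difference is order of operations — you apply \eqref{identity1} immediately to collapse to $P_kg(s)-P_kg(s-\alpha)$ and then Taylor-expand, whereas the paper inserts the iterated integral formula for the $\psi_k$ difference first and integrates by parts in $y$ afterward; your closing remark about the regime $2^k|\alpha|\gtrsim 1$ is harmless but unnecessary, since the single Taylor bound is what is claimed and holds uniformly in $\alpha$.
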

	\begin{proof}
		By writing out the kernel $L_k$ as defined in \eqref{Ln_psi}, we get
		\begin{equation}\label{est3.1}
		\begin{split}
		\int_{\T}&\frac{e^{-i\alpha}P_kg(y)L_k(s-y,\alpha)}{2\sin(\alpha/2)}dy=\frac{e^{-i\alpha}}{4\sin^2(\alpha/2)}\int_{\T}P_kg(y)\big(\psi_k(s-y)-\psi_k(s-y-\alpha) \big)dy\\
		&=\frac{e^{-i\alpha}}{4\sin^2(\alpha/2)}\int_{\T}P_{k}g(y)\int_{-\alpha}^{0}\psi_k'(s-y+\rho)d\rho dy\\
		&=\frac{e^{-i\alpha}}{4\sin^2(\alpha/2)}\int_{\T}P_{k}g(y)\int_{-\alpha}^{0}\Big(\int_{0}^\rho\psi_k''(s-y+\zeta)d\zeta +\psi_k'(s-y)\Big)d\rho dy\\
		&=\frac{e^{-i\alpha}}{4\sin^2(\alpha/2)}\int_{\T}P_{k}g(y)\int_{-\alpha}^{0}\int_{0}^\rho\psi_k''(s-y+\zeta)d\zeta d\rho dy+P_{k}g'(s)\frac{\alpha e^{-i\alpha}}{4\sin^2(\alpha/2)}\\
		&=\frac{e^{-i\alpha}}{4\sin^2(\alpha/2)}\int_{-\alpha}^{0}\int_{0}^\rho P_{k}g''(s+\zeta)d\zeta d\rho +P_{k}g'(s)\frac{\alpha e^{-i\alpha}}{4\sin^2(\alpha/2)},
		\end{split}
		\end{equation}
		where we used integration by parts in $y$ and applied the identity \eqref{identity1}.  The bounds \eqref{est3} now follow immediately.
	\end{proof}

We notice that in the lemma above, we have extracted the dependence in $\alpha$ through the term
		\begin{equation*}
		\begin{split}
		E(\alpha):=P_{k}g'(s)\frac{\alpha e^{-i\alpha}}{4\sin^2(\alpha/2)},
		\end{split}
		\end{equation*}
		for which we notice that upon integration in $\alpha$, we will get an error term due to the oddness of the integral.	When handling high-high interactions in the $g$ functions, we will need bounds for the expression
	\begin{equation}
	\mathcal{E}(s,\alpha):=\int_{\T^2}\frac{e^{-i\alpha/2}}{2\sin(\alpha/2)}P_{k_1}g_1(y_1)L_{k_1}(s-y_1,\alpha)P_{k_2}g_2(y_2)L_{k_2}(s-y_2,\alpha)\,dy_1dy_2.
	\end{equation}
	Using the modified kernel, as defined in \eqref{Ltilde}, we can decompose $\mathcal{E}$ to get
	\begin{equation}
	\begin{split}
	\mathcal{E}(s,\alpha)
	&=\int_{\T^2}\frac{e^{-i\alpha/2}}{2\sin(\alpha/2)}P_{k_1}g_1(y_1)L_{k_1}(s-y_1,\alpha)P_{k_2}g_2(y_2)\widetilde{L}_{k_2}(s-y_2,\alpha)\,dy_1dy_2\\
	&\quad+\int_{\T^2}\frac{e^{-i\alpha}}{4\sin^2(\alpha/2)}P_{k_1}g_1(y_1)L_{k_1}(s-y_1,\alpha)P_{k_2}g_2(y_2)2^{-k_2}\min\{1,2^{k_2}\alpha\}\psi_{k_2}'(s-y_2)\,dy_1dy_2\\
	&=:\mathcal{E}_1(s,\alpha)+\mathcal{E}_2(s,\alpha),
	\end{split}
	\end{equation}
 	whose bounds we collect in the following lemma.
	\begin{lemma}\label{lem_est4}
		Let $g_1,g_2\in Z_1$, $k_1,k_2\in\mathbb{N}$, and $L_{k_1}(s-y_1,\alpha)$ be defined as in \eqref{Ldef} and $\widetilde{L}_{k_2}(s-y_2,\alpha)$ as in \eqref{Ltilde}. Then
		\begin{equation}\label{est5}
		\begin{split}
		\big\|\mathcal{E}_1(s,\alpha)\big\|_{L^2_s}\lesssim\frac{2^{\min\{k_1,k_2\}/2}2^{-k_1-k_2}}{2|\sin(\alpha/2)|}\min\{2^{k_1},|\alpha|^{-1} \}\min\{2^{2k_2}|\alpha|,|\alpha|^{-1} \}\|P_{k_1}g_1'\|_{L^2}\|P_{k_2}g_2'\|_{L^2}.
		\end{split}		
		\end{equation}
		Moreover, for $2^{k_2}|\alpha|<1$ we have
  		\begin{equation}\label{est6}
		\begin{split}
		\big\|\mathcal{E}_2(s,\alpha)-P_{k_1}g_1'(s)P_{k_2}g_2'(s)\frac{\alpha^2 e^{-i\alpha}}{8\sin^3(\alpha/2)} \big\|_{L^2_s}\lesssim2^{k_2/2}\frac{2^{k_1}|\alpha|^3}{8|\sin^3(\alpha/2)|}\|P_{k_1}g_1'\|_{L^2}\|P_{k_2}g_2'\|_{L^2},
		\end{split}		
		\end{equation}
 and for $2^{k_2}|\alpha|>1$ we have
		\begin{equation}\label{est7}
		\begin{split}
		\big\|\mathcal{E}_2(s,\alpha) \big\|_{L^2_s}\lesssim\frac{2^{\min\{k_1,k_2\}/2}2^{-k_1-k_2}}{4|\sin^2(\alpha/2)|\cdot|\alpha|}\|P_{k_1}g_1'\|_{L^2}\|P_{k_2}g_2'\|_{L^2}.
		\end{split}		
		\end{equation}

	\end{lemma}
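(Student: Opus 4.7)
The plan is to prove all three bounds by exploiting the convolution structure of $\mathcal{E}_1$ and $\mathcal{E}_2$ in $y_1,y_2$, combining Young's inequality with the kernel $L^1$ estimates from Lemma~\ref{lem:kernel_1} and Lemma~\ref{lem:kernel2}, together with Bernstein's inequality $\|P_k h\|_{L^\infty}\lesssim 2^{k/2}\|P_k h\|_{L^2}$ to balance the two Littlewood--Paley factors.

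For \eqref{est5}, I would write $\mathcal{E}_1(s,\alpha) = \frac{e^{-i\alpha/2}}{2\sin(\alpha/2)} F_1(s,\alpha) F_2(s,\alpha)$, where $F_1$ and $F_2$ denote the $y$-convolutions of $P_{k_1}g_1$ and $P_{k_2}g_2$ against $L_{k_1}(\cdot,\alpha)$ and $\widetilde{L}_{k_2}(\cdot,\alpha)$ respectively. Then I would bound the $L^2_s$ norm of the product by Hölder as $L^\infty_s\cdot L^2_s$, placing the $L^\infty_s$ on whichever of $F_1,F_2$ is Fourier-localized at the lower frequency. Young's inequality in $y$ gives $\|F_j\|_{L^p_s}\leq \|P_{k_j}g_j\|_{L^p}\|\cdot\|_{L^1}$ for $p\in\{2,\infty\}$; the kernel $L^1$ bounds of Lemmas~\ref{lem:kernel_1}--\ref{lem:kernel2} supply the two $\min\{\cdot,\cdot\}$ factors, while the identity $\|P_{k_j}g_j\|_{L^2}\sim 2^{-k_j}\|P_{k_j}g_j'\|_{L^2}$ together with Bernstein on the low-frequency factor produce the prefactor $2^{\min\{k_1,k_2\}/2}2^{-k_1-k_2}$ in both cases $k_1\leq k_2$ and $k_2\leq k_1$.

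For $\mathcal{E}_2$, the key simplification is that differentiating the identity \eqref{identity1} in $s$ yields $\int_{\T}P_{k_2}g_2(y_2)\psi_{k_2}'(s-y_2)\,dy_2 = P_{k_2}g_2'(s)$, so that
\[
\mathcal{E}_2(s,\alpha) = \tfrac{e^{-i\alpha}}{4\sin^2(\alpha/2)}\,2^{-k_2}\min\{1,2^{k_2}\alpha\}\, J_1(s,\alpha)\, P_{k_2}g_2'(s),
\]
with $J_1(s,\alpha):=\int_{\T}P_{k_1}g_1(y_1)L_{k_1}(s-y_1,\alpha)\,dy_1$. In the regime $2^{k_2}|\alpha|<1$ of \eqref{est6}, the truncation factor collapses to $\alpha$, and I would invoke Lemma~\ref{lem_est3}, rearranged into the form $J_1(s,\alpha)=P_{k_1}g_1'(s)\tfrac{\alpha}{2\sin(\alpha/2)}+\widetilde{R}_1(s,\alpha)$ with $\|\widetilde{R}_1\|_{L^2_s}\lesssim \tfrac{|\alpha|^2 2^{k_1}}{4|\sin^2(\alpha/2)|}\|P_{k_1}g_1'\|_{L^2}$. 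The leading contribution then reassembles exactly into $P_{k_1}g_1'(s)P_{k_2}g_2'(s)\tfrac{\alpha^2 e^{-i\alpha}}{8\sin^3(\alpha/2)}$, while the remainder is controlled in $L^2_s$ by placing $P_{k_2}g_2'$ in $L^\infty$ (paying the $2^{k_2/2}$ Bernstein loss) and dragging along the $\tfrac{\alpha}{2\sin(\alpha/2)}$ prefactor, yielding precisely the target error. In the regime $2^{k_2}|\alpha|>1$ of \eqref{est7} the truncation reduces to $2^{-k_2}$ and no main-term extraction is needed: applying the same $L^\infty$-$L^2$ Hölder split to $J_1\cdot P_{k_2}g_2'$ as in \eqref{est5}, together with $\|L_{k_1}(\cdot,\alpha)\|_{L^1}\lesssim \min\{2^{k_1},|\alpha|^{-1}\}\leq |\alpha|^{-1}$ and a Bernstein gain on the low-frequency factor, gives the stated bound.

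The main obstacle I anticipate is the phase-factor bookkeeping in \eqref{est6}: one must carefully track the unimodular factors arising from inverting Lemma~\ref{lem_est3} to get an expression for $J_1$ itself (rather than $\tfrac{e^{-i\alpha/2}}{2\sin(\alpha/2)}J_1$), and verify that they combine with the outer $\tfrac{e^{-i\alpha}}{4\sin^2(\alpha/2)}$ and the factor $\alpha$ to produce exactly $\tfrac{\alpha^2 e^{-i\alpha}}{8\sin^3(\alpha/2)}$. Apart from this accounting, all three estimates reduce to routine applications of Young, Hölder, and Bernstein in the spirit of the algebra property of Lemma~\ref{lem:kernel1}.
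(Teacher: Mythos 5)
Your proposal is correct and fleshes out essentially the same argument that the paper sketches in its one-paragraph proof: Young's inequality plus the kernel $L^1$ bounds of Lemmas~\ref{lem:kernel_1} and \ref{lem:kernel2}, together with Bernstein, for $\mathcal{E}_1$; and the identity $\int_{\T}P_{k_2}g_2(y_2)\psi_{k_2}'(s-y_2)\,dy_2=P_{k_2}g_2'(s)$ (the paper's ``integration by parts in $y_2$'') followed by Lemma~\ref{lem_est3} for $\mathcal{E}_2$. The phase-factor bookkeeping concern you flag in \eqref{est6} is in fact well-founded: carefully tracking the three copies of $e^{-i\alpha/2}/(2\sin(\alpha/2))$ (the outer prefactor in $\mathcal{E}$, the factor inside $L_{k_1}$, and the factor in the correction term of $\widetilde{L}_{k_2}$) produces a leading term $\alpha^2 e^{-3i\alpha/2}/(8\sin^3(\alpha/2))$ rather than the $\alpha^2 e^{-i\alpha}/(8\sin^3(\alpha/2))$ printed in \eqref{est6}; since the discrepancy is a unimodular factor and the only property of this prefactor used downstream (in Lemma~\ref{lem:nonlin1}) is the near-oddness of $\alpha^2/\sin^3(\alpha/2)$ near $\alpha=0$, it is a harmless typo that affects neither the stated bound nor its application.
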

	\begin{proof}
		The bounds for \eqref{est5} follow from a straightforward application of Lemmas~\ref{lem:kernel_1}-\ref{lem:kernel2}, followed by Berstein's inequality $\|P_k f\|_{L^\infty}\lesssim2^{k/2}\|P_k f\|_{L^2}$. The bounds in \eqref{est6} are a consequence of first performing an integration by parts in $y_2$ and applying the identity \eqref{identity1}, and then applying Lemma~\ref{lem_est3}. Finally, the bounds in \eqref{est7} are a direct consequence of applying Lemma~\ref{lem:kernel_1}.
	\end{proof}

	\begin{lemma}\label{lem_est5}
		Let $g\in Z_1$ and $L_{\leq k}(s,\alpha)$ be defined as in \eqref{L_sum}. Then we have
		\begin{equation}\label{est8}
		\begin{split}
		\bigg\|\frac{d}{d\alpha}\int_{\T}P_{\leq k}g(y)L_{\leq k}(s-y,\alpha)dy\bigg\|_{L^\infty}&\lesssim \frac{1}{2|\sin(\alpha/2)|}\|P_{\leq k}g'(s)\|_{L^\infty}.
		\end{split}
		\end{equation}
	\end{lemma}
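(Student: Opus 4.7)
The plan is to exploit the closed-form identity derived in the proof of Lemma~\ref{lem_est0}: because the sum over Littlewood--Paley pieces telescopes, the convolution against $L_{\leq k}$ collapses to a finite difference, giving
\begin{equation*}
\int_{\T}P_{\leq k}g(y)L_{\leq k}(s-y,\alpha)\,dy = \frac{e^{-i\alpha/2}}{2\sin(\alpha/2)}\bigl[P_{\leq k}g(s)-P_{\leq k}g(s-\alpha)\bigr].
\end{equation*}
This representation has the advantage that the $k$-dependence has been entirely absorbed into the projected function $P_{\leq k}g$, so the differentiation in $\alpha$ reduces to a one-dimensional scalar computation, and no refined kernel (such as the $\widetilde{L}_n$ used in Lemmas~\ref{lem_est3}--\ref{lem_est4}) is needed.

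Applying the product and quotient rules, $\partial_\alpha$ of the right-hand side splits into three pieces: a term $-\tfrac{i}{2}e^{-i\alpha/2}\bigl[P_{\leq k}g(s)-P_{\leq k}g(s-\alpha)\bigr]/(2\sin(\alpha/2))$ coming from the phase $e^{-i\alpha/2}$; a term $e^{-i\alpha/2}P_{\leq k}g'(s-\alpha)/(2\sin(\alpha/2))$ coming from differentiating the numerator; and a term $-e^{-i\alpha/2}\cos(\alpha/2)\bigl[P_{\leq k}g(s)-P_{\leq k}g(s-\alpha)\bigr]/(4\sin^2(\alpha/2))$ coming from differentiating $1/(2\sin(\alpha/2))$.

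Each piece is bounded by elementary means. For the second piece the estimate is direct and yields $\tfrac{1}{2|\sin(\alpha/2)|}\|P_{\leq k}g'\|_{L^\infty}$. For the first and third pieces we use the mean-value bound
\begin{equation*}
|P_{\leq k}g(s)-P_{\leq k}g(s-\alpha)| \leq |\alpha|\,\|P_{\leq k}g'\|_{L^\infty},
\end{equation*}
combined with the elementary comparison $|\alpha| \lesssim |\sin(\alpha/2)|$ valid for $\alpha\in\T$: the first piece is then controlled by $\|P_{\leq k}g'\|_{L^\infty} \lesssim |\sin(\alpha/2)|^{-1}\|P_{\leq k}g'\|_{L^\infty}$ (since $|\sin(\alpha/2)|\leq 1$), and the third piece is controlled by $|\alpha|(4\sin^2(\alpha/2))^{-1}\|P_{\leq k}g'\|_{L^\infty} \lesssim |\sin(\alpha/2)|^{-1}\|P_{\leq k}g'\|_{L^\infty}$.

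Since the whole argument is a direct computation once the telescoping identity is recognized, I anticipate no real obstacle; the key observation is simply that the claimed bound is $k$-independent, so the rough mean-value estimate (rather than a Taylor correction with the modified kernel $\widetilde{L}_n$) already suffices.
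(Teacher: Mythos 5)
Your proof is correct and follows essentially the same route as the paper: you start from the same telescoping identity collapsing the convolution against $L_{\leq k}$ to $e^{-i\alpha/2}\bigl[P_{\leq k}g(s)-P_{\leq k}g(s-\alpha)\bigr]/(2\sin(\alpha/2))$, differentiate in $\alpha$, and use the mean-value bound together with $|\alpha|\lesssim|\sin(\alpha/2)|$ on $\T$. The only cosmetic difference is that the paper groups the phase-derivative and denominator-derivative contributions into a single term $\frac{d}{d\alpha}\frac{e^{-i\alpha/2}}{2\sin(\alpha/2)}$ and leaves the final estimates implicit, whereas you expand into three pieces and bound each explicitly.
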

	
	\begin{proof}
		We use \eqref{Ln_psi} to write
		\begin{equation*}
		\begin{split}
		\int_{\T}P_{\leq k}g(y)L_{\leq k}(s-y,\alpha)dy&=e^{-i\alpha/2}\frac{P_{\leq k}g(s)-P_{\leq k}g(s-\alpha)}{2\sin{(\alpha/2)}},
		\end{split}
		\end{equation*}
hence taking a derivative in $\alpha$ we obtain
  		\begin{equation*}
		\begin{split}
		\frac{d}{d\alpha}\int_{\T}P_{\leq k}g(y)L_{\leq k}(s-y,\alpha)dy&=\big(P_{\leq k}g(s)-P_{\leq k}g(s-\alpha)\big)\frac{d}{d\alpha}\frac{e^{-i\alpha/2}}{2\sin{(\alpha/2)}}+e^{-i\alpha/2}\frac{P_{\leq k}g'(s-\alpha)}{2\sin{(\alpha/2)}},
		\end{split}
		\end{equation*}
which immediately implies the result.

  	\end{proof}

	\section{Analysis of the full nonlinearity}\label{sect:final}
	We consider the multi-linear paraproduct defined in \eqref{hl7} for $n+m\geq2$
	\begin{equation*}
	\begin{split}
	\mathcal{T}_k(P_{k_1}g_1,\cdots,P_{k_m}g_m,&P_{k_{m+1}}f_1,\cdots,P_{k_{n+m}}f_n)(s)\\
 &=\frac{1}{(2\pi)^m}P_k\sum_{k_1,\dots,k_m\in\mathbb{Z}}\int_{\T}\bigg(\int_{\T^m}\frac{e^{-i\alpha/2}}{2\sin(\alpha/2)} P_{k_1}g_1(y_1)\cdots P_{k_m}g_m(y_m)\\
	&\quad\times 
	L_{k_1}(s-y_1,\alpha)\cdots L_{k_m}(s-y_m,\alpha)A(\alpha)\,dy_1\cdots dy_m\\
	&\quad\times \big[P_{\leq k-4m}H(s-\alpha)+ P_{\geq k-4m}H(s-\alpha)\big]\bigg)\,d\alpha, \\
	&=\mathcal{T}_k^{L}+\mathcal{T}_k^{H},
	\end{split}
	\end{equation*}
	where $\mathcal{T}_k^{L}$ denotes the portion of the paraproduct containing the low frequency part of $H$, $P_{\leq k-4m}H$, and $\mathcal{T}_k^{H}$ denotes the remaining part.
	
	In $\mathcal{T}_k^{L}$, in order for all frequencies to add up to $k$, we must have high frequencies of order $k$ in $g$. We hence further split $$\mathcal{T}_k^{L}=\mathcal{T}_k^{L,1}+\mathcal{T}_k^{L,2}$$ where
	\begin{equation}
	\begin{split}
	\mathcal{T}_k^{L,1}(P_{k}g_1,P_{\leq k}g_2,\cdots,&P_{\leq k}g_m,P_{k_{m+1}}f_1,\cdots,P_{k_{m+n}}f_n)(s):=\frac{e^{is}}{(2\pi)^m}\int_{\T}\int_{\T^m}\frac{e^{-i\alpha/2}A(\alpha) P_{\leq k}H(s-\alpha)}{2\sin(\alpha/2)}  \\
 &\times P_{ k}g_1(y_1)L_{ k}(s-y_1,\alpha)\cdots P_{\leq k}g_m(y_m)L_{\leq k}(s-y_m,\alpha) dy_1\cdots dy_md\alpha
	\end{split}
	\end{equation}
	for when $\widetilde{g}_1$ has the highest frequency, and
	\begin{equation}
	\begin{split}
	\mathcal{T}_k^{L,2}(&P_{k_1}g_1,P_{k_2}g_2,\cdots,P_{\leq k}g_m,P_{k_{m+1}}f_1,\cdots,P_{k_{m+n}}f_n)(s):=\sum_{\substack{k_1,k_2\geq k-3 \\ |k_1-k_2|\leq6}}\frac{e^{is}}{(2\pi)^m}\int_{\T}\int_{\T^m}\frac{e^{-i\alpha/2}}{2\sin(\alpha/2)} A(\alpha) \\&\times P_{\leq k}H(s-\alpha)  P_{ k_1}g_1(y_1)L_{ k_1}(s-y_1,\alpha)\cdots P_{\leq k}g_m(y_m)L_{\leq k}(s-y_m,\alpha) dy_1\cdots dy_md\alpha,
	\end{split}
	\end{equation}
	in the case when there are high-high interactions in the $g$ functions. Here and below, we recall that
	\begin{equation}\label{L_sum}
	L_{\leq k}(s,\alpha):=\sum_{j\neq\{0,1\}}\varphi_{\leq k+2}(j)\frac{e^{-i\alpha/2}(1-e^{-i\alpha j})}{2\sin (\alpha/2)}e^{isj}.
	\end{equation}
	
	When dealing with the high frequency part of $H$, we again perform a split $$\mathcal{T}_k^{H}=\mathcal{T}_k^{H,1}+\mathcal{T}_k^{H,2}+\mathcal{T}_k^{L,1}$$ 
	to capture the possible scenarios. In the first, we have high frequencies in $g$ as well, in which case the frequency of $H$ must be of the same order as the highest frequency in the $g$ functions. We hence define
	\begin{equation}
	\begin{split}
	\mathcal{T}_k^{H,1}(P_{k_1}g_1,&P_{\leq k}g_2,\cdots,P_{\leq k}g_m,P_{k_{m+1}}f_1,\cdots,P_{k_{m+n}}f_n)(s):=\!\!\sum_{\substack{k_0,k_1\geq k-3 \\ |k_0-k_1|\leq6}}\frac{e^{is}}{(2\pi)^m}\int_{\T}\int_{\T^m}\frac{e^{-i\alpha/2}}{2\sin(\alpha/2)} A(\alpha)\\&\times P_{ k_0}H(s-\alpha)  P_{ k_1}g_1(y_1)L_{ k_1}(s-y_1,\alpha)\cdots P_{\leq k}g_m(y_m)L_{\leq k}(s-y_m,\alpha) dy_1\cdots dy_md\alpha.
	\end{split}
	\end{equation}
	In the second case, all the $g$ functions have low frequencies, thus implying that the frequency of $H$ must be of order $k$. Here we define
	\begin{equation}
	\begin{split}
	\mathcal{T}_k^{H,2}(P_{\leq k}g_1,\cdots,&P_{\leq k}g_m,P_{k_1}f_1,\cdots P_{k_n}f_{n})(s)=\frac{e^{is}}{(2\pi)^m}\int_{\T}\int_{\T^m}\frac{e^{-i\alpha/2}}{2\sin(\alpha/2)} A(\alpha) P_kH(s-\alpha)\\&\times P_{\leq k}g_1(y_1)L_{\leq k}(s-y_1,\alpha)\cdots P_{\leq k}g_m(y_m)L_{\leq k}(s-y_m,\alpha)dy_1\cdots dy_md\alpha.
	\end{split}
	\end{equation}
	Finally, since we don't have symmetry in $H$ and $g_1$, we need to add the scenario that $g_1$ is high frequency and $H$ of low frequency, which we already stated earlier.

	\subsection{The nonlinearity $\mathcal{T}_k^{L}$} 
	We begin by performing the split  
	\begin{equation}
	\mathcal{T}_k^{L}=\mathcal{T}_k^{L,\leq}+\sum_{l\in[-k,0]}\mathcal{T}_{k,l}^{L,\sim}
	\end{equation}
	where here
	\begin{equation}\label{nonlin1}
	\begin{split}
	\mathcal{T}_k^{L,\leq}(P_{k_1}g_1,\cdots,P_{\leq k}g_m,&P_{k_{m+1}}f_1,\cdots,P_{k_{m+n}}f_n)(s)=\frac{e^{is}}{(2\pi)^m}\int_{|\alpha|\leq2^{-k}}\int_{\T^m}\frac{e^{-i\alpha/2}A(\alpha) P_{\leq k}H(s-\alpha)}{2\sin(\alpha/2)}  \\
	&\times P_{ k_1}g_1(y_1)L_{ k_1}(s-y_1,\alpha)\cdots P_{\leq k}g_m(y_m)L_{\leq k}(s-y_m,\alpha) dy_1\cdots dy_md\alpha,
	\end{split}
	\end{equation}
	and
	\begin{equation}\label{nonlin2}
	\begin{split}
	\mathcal{T}_{k,l}^{L,\sim}(P_{k_1}g_1,\cdots,P_{\leq k}g_m,&P_{k_{m+1}}f_1,\cdots,P_{k_{m+n}}f_n)(s)=\frac{e^{is}}{(2\pi)^m}\int_{|\alpha|\sim2^{l}}\int_{\T^m}\frac{e^{-i\alpha/2}A(\alpha) P_{\leq k}H(s-\alpha)}{2\sin(\alpha/2)}  \\
	&\times P_{ k_1}g_1(y_1)L_{ k_1}(s-y_1,\alpha)\cdots P_{\leq k}g_m(y_m)L_{\leq k}(s-y_m,\alpha) dy_1\cdots dy_md\alpha.
	\end{split}
	\end{equation}

	\begin{lemma}\label{lem:nonlin1}
		Assume that $f_1,\cdots,f_n,g_1,\cdots,g_m\in Z_1$. Then for $k\in\mathbb{N}$ and $t\in(0,\infty)$, we have
		\begin{equation}\label{nonlin7}
		\begin{split}
		\|P_k\mathcal{T}^{L,1}\|_{L^2}&\lesssim\|P_kg_1'\|_{L^2}\prod_{i=2}^m\|g_i'\|_{L^{\infty}}\prod_{j=1}^n\|f_j'\|_{L^\infty}\\
		&\lesssim2^{-k/2}(1+2^kt)^{-2/3}\prod_{i=1}^m\|g_i\|_{Z_1}\prod_{j=1}^n\|f_i\|_{Z_1},
		\end{split}	
		\end{equation}
		and
		\begin{equation}\label{nonlin8}
		\|P_k\mathcal{T}^{L,2}\|_{L^2}\lesssim\sum_{\substack{k_1,k_2\geq k-3 \\ |k_1-k_2|\leq6}}2^{k_1/2}\|P_{k_1}g_1'\|_{L^2}\|P_{k_2}g_2'\|_{L^2}\prod_{i=3}^m\|g_i'\|_{L^{\infty}}\prod_{j=1}^n\|f_j'\|_{L^\infty}.
		\end{equation}
		Moreover, we have
		\begin{equation}\label{nonlin9}
		\|P_k\mathcal{T}^{L}(t)\|_{L^2}\lesssim2^{-k/2}(1+2^kt)^{-2/3}\prod_{i=1}^m\|g_i\|_{Z_1}\prod_{j=1}^n\|f_j\|_{Z_1}.
		\end{equation}
	\end{lemma}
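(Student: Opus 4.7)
The plan is to estimate $\mathcal{T}^{L,1}$ and $\mathcal{T}^{L,2}$ separately by using the $\alpha$-decomposition $\mathcal{T}^{L}=\mathcal{T}^{L,\leq}+\sum_{l\in[-k,0]}\mathcal{T}^{L,\sim}_{k,l}$ introduced just before the statement, and then to deduce \eqref{nonlin9} from \eqref{nonlin7}--\eqref{nonlin8} by plugging in the $Z_1$ embeddings.

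For the dyadic pieces $\mathcal{T}^{L,\sim}_{k,l}$, the weight $\frac{1}{2|\sin(\alpha/2)|}\sim 2^{-l}$ is bounded, so the estimate is essentially H\"older. Lemma~\ref{lem_est0} gives $\big\|\int P_{\leq k}g_i(y_i) L_{\leq k}(s-y_i,\alpha)\,dy_i\big\|_{L^\infty_s}\lesssim \|P_{\leq k}g_i'\|_{L^\infty}$ uniformly in $\alpha$ for the $m-1$ low-frequency $g$ factors; for the unique high-frequency kernel in $\mathcal{T}^{L,1}$, Lemma~\ref{lem:kernel_1} gives $\int_\T|L_k(s-y_1,\alpha)|\,ds\lesssim \min\{2^k,2^{-l}\}$, and the $H$ factor is handled by Lemma~\ref{lem:kernel1} since $Z_1$ is an algebra. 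Integrating $|\alpha|\,d\alpha$ over $|\alpha|\sim 2^l$ and summing the geometric series over $l\in[-k,0]$ produces the right-hand side of \eqref{nonlin7} up to a harmless logarithmic loss in the worst scale.

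For the near-diagonal piece $\mathcal{T}^{L,\leq}$ of $\mathcal{T}^{L,1}$, the accumulated singularity is of order $\alpha^{-(m+1)}$: one factor from the outer kernel, one from the high-frequency $L_k$, and $m-1$ from the low-frequency $L_{\leq k}$. I extract the leading behavior using Lemma~\ref{lem_est1} on each low-frequency $g_i$, Lemma~\ref{lem_est2} to replace $P_{\leq k}H(s-\alpha)$ by $P_{\leq k}H(s)$, and Lemma~\ref{lem_est3} to extract the leading piece from the high-frequency $L_k$ integral. Each extraction contributes a factor of $\alpha$, precisely compensating the singularity. The surviving leading contribution, after all extractions, integrates to a bounded quantity over the symmetric interval $|\alpha|\leq 2^{-k}$ by the oddness of $\cot(\alpha/2)$ paired against the bounded constant part of $\frac{e^{-i\alpha/2}}{2\sin(\alpha/2)}$; the remainders carry additional powers of $|\alpha|$ and integrate directly. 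For $\mathcal{T}^{L,2}$, which has two high-frequency factors $P_{k_1}g_1, P_{k_2}g_2$ with $k_1,k_2\geq k-3$ and $|k_1-k_2|\leq 6$, Lemma~\ref{lem_est1} is not available, so I rely on Lemma~\ref{lem_est4} applied to the product $\mathcal{E}=\mathcal{E}_1+\mathcal{E}_2$; the Bernstein-type factor $2^{k_1/2}$ in \eqref{nonlin8} arises precisely from the $2^{\min\{k_1,k_2\}/2}$ loss there.

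Finally, \eqref{nonlin9} follows from \eqref{nonlin7}--\eqref{nonlin8} by invoking the embeddings $\|P_k g'\|_{L^2}\lesssim 2^{-k/2}(1+2^kt)^{-2/3}\|g\|_{Z_1}$ and $\|g'\|_{L^\infty}\lesssim (1+t)^{-2/3}\|g\|_{Z_1}$ (and the analogues for $f_j$), then summing the series $\sum_{k_1\geq k-3}2^{-k_1/2}(1+2^{k_1}t)^{-4/3}$ against the target $2^{-k/2}(1+2^kt)^{-2/3}$ by splitting the regimes $2^k t\leq 1$ (geometric sum dominated by the lowest scale) and $2^k t>1$ (where the $(1+2^{k_1}t)^{-4/3}$ decay wins). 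The main obstacle is the bookkeeping of the cancellation at $\alpha=0$ in the near-diagonal piece: the leading-order main terms produced by Lemmas~\ref{lem_est1}, \ref{lem_est2}, \ref{lem_est3} from all $m+1$ kernel factors must combine into an integrand whose odd part is killed by the symmetric integration $\int_{|\alpha|\leq 2^{-k}}d\alpha$, while the remainder terms must retain enough powers of $\alpha$ to remain integrable once the singularity is stripped away.
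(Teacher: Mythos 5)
Your proposal is correct and follows essentially the same route as the paper: the split into $\mathcal{T}^{L,\leq}$ and $\sum_l \mathcal{T}^{L,\sim}_{k,l}$, extracting leading odd-in-$\alpha$ terms via Lemmas~\ref{lem_est1}--\ref{lem_est3} (resp.~Lemma~\ref{lem_est4} for the high-high $g$ case), Young/H\"older with the kernel bounds of Lemma~\ref{lem:kernel_1} and Lemma~\ref{lem_est0} on the dyadic shells, and then plugging in the $Z_1$ embeddings and summing over $k_1,k_2$ to obtain \eqref{nonlin9}. The only small inaccuracy is the mention of a ``harmless logarithmic loss'' in the dyadic sum --- the sum $\sum_{l\in[-k,0]}$ is in fact geometric with no log loss --- but this does not affect the argument.
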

	\begin{proof}
		We begin with $\mathcal{T}_k^{L,\leq,1}$ and recall that we have
		\begin{equation}
		\begin{split}
		\mathcal{T}_k^{L,\leq,1}(P_{k_1}g_1,\cdots,P_{\leq k}g_m,&P_{k_{m+1}}f_1,\cdots,P_{k_{m+n}}f_n)(s)=\frac{e^{is}}{(2\pi)^m}\int_{\T^m}\int_{|\alpha|\leq2^{-k}}\!\!\!\!\frac{e^{-i\alpha/2}A(\alpha) P_{\leq k}H(s-\alpha)}{2\sin(\alpha/2)}  \\
		&\times P_{ k}g_1(y_1)L_{ k}(s-y_1,\alpha)\prod_{j=2}^m P_{\leq k}g_j(y_j)L_{\leq k}(s-y_j,\alpha)\alpha dy_1\cdots dy_md,
		\end{split}
		\end{equation}
		We now use Lemmas~\ref{lem_est1}-\ref{lem_est3} to write
\begin{equation*}
    \begin{aligned}
        \mathcal{T}_k^{L,\leq,1}(s)&=\mathcal{T}_k^{L,\leq,1}(s)-P_{\leq k}H(s)\frac{e^{is}P_{ k}g_1'(s)}{(2\pi)^m}\prod_{j=2}^m P_{\leq k}g_j'(s)\int_{|\alpha|\leq2^{-k}} \frac{A(\alpha) \alpha e^{-i\alpha/2}}{4\sin^2{(\alpha/2)}}\prod_{j=2}^m \frac{\alpha e^{-i\alpha/2}}{2\sin(\alpha/2)}d\alpha\\
        &\quad+P_{\leq k}H(s)\frac{e^{is}P_{ k}g_1'(s)}{(2\pi)^m}\prod_{j=2}^m P_{\leq k}g_j'(s)\int_{|\alpha|\leq2^{-k}} \frac{A(\alpha) \alpha e^{-i\alpha/2}}{4\sin^2{(\alpha/2)}}\prod_{j=2}^m \frac{\alpha e^{-i\alpha/2}}{2\sin(\alpha/2)}d\alpha,
    \end{aligned}
\end{equation*}
thus using the oddness in $\alpha$ for the second line and the estimates in Lemmas~\ref{lem_est1}-\ref{lem_est3}, we obtain
		\begin{equation}\label{Tklleq1}
		\begin{split}	\|\mathcal{T}_k^{L,\leq,1}\|_{L^2_s}&\lesssim \|P_{k}g_1'\|_{L^2}\prod_{i=2}^m\|g_i'\|_{L^\infty}\|P_{\leq k}H(s)\|_{L^\infty}\int_{|\alpha|\leq 2^{-k}}\frac{|\alpha|^22^{k}}{4|\sin^2(\alpha/2)|}d\alpha\\
&\lesssim\|P_kg_1'\|_{L^2}\prod_{i=2}^m\|g_i'\|_{L^\infty}\prod_{j=1}^m\|f_j'\|_{L^\infty}.
		\end{split}
		\end{equation}
		
		We now consider $\mathcal{T}_{k,l}^{L,\sim,1}$. Using the expression of the kernel $L_{k_1}$ as given in \eqref{Ln_psi} and identity \eqref{identity1} for the highest frequency term we get
		\begin{equation}\label{nonlin3}
		\int_{\T}P_{k_1}g_1(y_1)L_{k_1}(s-y_1,\alpha)dy_1=\frac{e^{-i\alpha/2}}{2\sin(\alpha/2)}\big(P_{k_1}g_1(s)-P_{k_1}g_1(s-\alpha)\big).
		\end{equation}
	Then, from Lemma~\ref{lem:kernel_1} and Lemma~\ref{lem_est0}, we immediately get 
		\begin{equation*}
		\begin{split}
		\sum_{l\in[-k,0]}\|\mathcal{T}_{k,l}^{L,\sim,1}\|_{L^2}&\lesssim\sum_{l\in[-k,0]}\|P_{k}g_1'\|_{L^2}\|P_{\leq k}H\|_{L^\infty}\prod_{i=2}^m\|P_{\leq k}g_i'(s)\|_{L^\infty}\int_{|\alpha|\sim2^l}\frac{2^{-k}}{4|\sin^2(\alpha/2)|}d\alpha
		\\&\lesssim\sum_{l\in[-k,0]}2^{-l-k}\|P_kg_1'\|_{L^2}\prod_{i=2}^m\|g_i'\|_{L^\infty}\prod_{j=1}^n\|f_j'\|_{L^\infty}\\&\lesssim\|P_kg_1'\|_{L^2}\prod_{i=2}^m\|g_i'\|_{L^\infty}\prod_{j=1}^n\|f_j'\|_{L^\infty}
		\end{split}
		\end{equation*}
		By using the definition of the $Z_1$-norm, we obtain \eqref{nonlin7}.
		
		We now consider the scenario when we have more than one high frequency function $g$. We begin with $\mathcal{T}_{k}^{L,\leq,2}$. We get
		\begin{equation*}
		\begin{split}
		\mathcal{T}_k^{L,\leq,2}(s)&=\sum_{\substack{k_1,k_2\geq k-3 \\ |k_1-k_2|\leq6}}\frac{e^{is}}{(2\pi)^m}\int_{|\alpha|\leq2^{-k_1}}\int_{\T^m}\frac{e^{-i\alpha/2}}{2\sin(\alpha/2)} A(\alpha)\\
		&\qquad\times P_{\leq k}H(s-\alpha)  P_{ k_1}g_1(y_1)L_{ k_1}(s-y_1,\alpha)P_{ k_2}g_2(y_2)L_{ k_2}(s-y_2,\alpha)\\
		&\qquad\times\prod_{j=3}^m P_{\leq k}g_j(y_j)L_{\leq k}(s-y_j,\alpha) dy_1\cdots dy_md\alpha,
		\end{split}
		\end{equation*}
		We can now apply Lemmas~\ref{lem_est1}-\ref{lem_est2} and \ref{lem_est4} to cancel the odd part of the integral in $\alpha$ and then to estimate
		\begin{equation}\label{Tklleq2}
		\begin{aligned}
		\|\mathcal{T}_k^{L,\leq,2}\|_{L^2}&\lesssim\sum_{\substack{k_1,k_2\geq k-3 \\ |k_1-k_2|\leq6}}\|P_{\leq k}H(s)\|_{L^\infty}2^{k_1/2}\|P_{k_1}g_1'\|_{L^2}\|P_{k_2}g_2'\|_{L^2}\prod_{i=3}^m\|P_{\leq k}g_i'(s)\|_{L^\infty}\\
		&\qquad\times\int_{|\alpha|\leq 2^{-k_1}}\Big(\frac{2^k|\alpha|^3}{8|\sin^3{(\alpha/2)}|}+\frac{2^{k_2}|\alpha|}{2|\sin{(\alpha/2)}|}+\frac{2^{k_1}|\alpha|^3}{8|\sin^3{(\alpha/2)}|}\Big)d\alpha\\
		&\lesssim\sum_{\substack{k_1,k_2\geq k-3 \\ |k_1-k_2|\leq6}}2^{k_1/2}\|P_{k_1}g_1'\|_{L^2}\|P_{k_2}g_2'\|_{L^2}\prod_{i=3}^m\|g_i'\|_{L^\infty}\prod_{j=1}^n\|f_j'\|_{L^\infty}.
		\end{aligned}
		\end{equation}

		Finally, we consider $\mathcal{T}_{k,l}^{L,\sim,2}$. We recall
		\begin{equation*}
		\begin{split}
		\mathcal{T}_{k,l}^{L,\sim,2}(s)&=\sum_{\substack{k_1,k_2\geq k-3 \\ |k_1-k_2|\leq6}}\frac{e^{is}}{(2\pi)^m}\int_{|\alpha|\sim2^l}\int_{\T^m}\frac{e^{-i\alpha/2}}{2\sin(\alpha/2)} A(\alpha)\prod_{j=3}^m P_{\leq k}g_j(y_j)L_{\leq k}(s-y_j,\alpha)\\
		&\qquad\times P_{\leq k}H(s-\alpha)  P_{ k_1}g_1(y_1)L_{ k_1}(s-y_1,\alpha)P_{ k_2}g_2(y_2)L_{ k_2}(s-y_2,\alpha)dy_1\cdots dy_md\alpha.
		\end{split}
		\end{equation*}
	Substituting the expression \eqref{Ln_psi} and integrating in $y_1$ and $y_2$, followed by  Lemmas~\ref{lem:kernel_1} and \ref{lem_est0}, we obtain
		\begin{equation*}
		\begin{split}
	\|\mathcal{T}_{k,l}^{L,\sim,2}\|_{L^2}&\lesssim\sum_{\substack{k_1,k_2\geq k-3 \\ |k_1-k_2|\leq6}}
\int_{|\alpha|\sim2^l}\bigg(\|P_{\leq k}H(s)\|_{L^\infty}\frac{2^{k_2/2}2^{-k_1-k_2}}{|\sin(\alpha/2)|\cdot|\alpha|^2}\|P_{k_1}g_1'\|_{L^2}\|P_{k_2}g_2'\|_{L^2}  \\
		&\qquad\times\prod_{i=3}^m\|P_{\leq k}g_i'(s)\|_{L^\infty}\bigg)d\alpha,
		\end{split}
		\end{equation*}
  hence
  \begin{equation*}
		\begin{split}
		\sum_{l\in[-k_1,0]}\|\mathcal{T}_{k,l}^{L,\sim,2}\|_{L^2}&\lesssim\sum_{\substack{k_1,k_2\geq k-3 \\ |k_1-k_2|\leq6}}\sum_{l\in[-k_1,0]}2^{k_2/2}2^{-k_1-k_2}2^{-2l}\|P_{k_1}g_1'\|_{L^2}\|P_{k_2}g_2'\|_{L^2}\prod_{i=3}^m\|g_i'\|_{L^\infty}\prod_{j=1}^n\|f_j'\|_{L^\infty}\\
		&\lesssim\sum_{\substack{k_1,k_2\geq k-3 \\ |k_1-k_2|\leq6}}2^{k_2/2}\|P_{k_1}g_1'\|_{L^2}\|P_{k_2}g_2'\|_{L^2}\prod_{i=3}^m\|g_i'\|_{L^\infty}\prod_{j=1}^n\|f_j'\|_{L^\infty},
		\end{split}
		\end{equation*}
		thus concluding the proof of \eqref{nonlin8}.

		Finally, in order to prove \eqref{nonlin9}, we notice that by using the definition of the $Z_1$-norm \eqref{norm2}, \eqref{nonlin8} can be rewritten as 
		\begin{equation*}
		\begin{split} \|P_k\mathcal{T}^{L,2}\|_{L^2}&\lesssim\sum_{\substack{k_1,k_2\geq k-3 \\ |k_1-k_2|\leq6}}2^{k_1/2}\|P_{k_1}g_1'\|_{L^2}\|P_{k_2}g_2'\|_{L^2}\prod_{i=3}^m\|g_i'\|_{L^\infty}\prod_{j=1}^n\|f_j'\|_{L^\infty}\\
		&\lesssim\sum_{\substack{k_1,k_2\geq k-3 \\ |k_1-k_2|\leq6}}2^{k_1/2}(1+2^{k_1}t)^{-2/3}2^{-{k_1}/2}(1+2^{k_2}t)^{-2/3}2^{-k_2/2}\prod_{i=1}^m\|g_i\|_{Z_1}\prod_{j=1}^n\|f_j\|_{Z_1}\\
		&\lesssim2^{-k/2}(1+2^kt)^{-2/3}\prod_{i=1}^m\|g_i\|_{Z_1}\prod_{j=1}^n\|f_j\|_{Z_1}
		\end{split}
		\end{equation*}
		which, combined with \eqref{nonlin7} concludes the proof of the lemma.
	\end{proof}

	\subsection{The nonlinearity $\mathcal{T}_k^H$}
	We will now study the part of the nonlinearity for which the function $H$ is of high frequency. We begin with $\mathcal{T}_k^{H,1}$, the setting for which we also have high frequencies in the $g$ functions. We obtain the following result.
	\begin{lemma}\label{lemTH1}
		Assume that $f_1,\cdots,f_n,g_1,\cdots,g_m\in Z_1$. Then for $k\in\mathbb{N}$ and $t\in(0,\infty)$, we have
		\begin{equation}\label{nonlin10}
		\|P_k\mathcal{T}^{H,1}\|_{L^2}\lesssim\sum_{\substack{k_1,k_0\geq k-3 \\ |k_1-k_0|\leq6}}2^{k_0/2}\|P_{k_1}g_1'\|_{L^2}\|P_{k_0}H\|_{L^2}\prod_{i=2}^m\|g_i'\|_{L^{\infty}}.
		\end{equation}
	\end{lemma}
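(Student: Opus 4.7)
The strategy follows Lemma \ref{lem:nonlin1} closely, with the new feature being a Bernstein conversion needed to accommodate the high-frequency factor $P_{k_0}H$. Fixing $k_0,k_1\geq k-3$ with $|k_0-k_1|\leq 6$, I split the $\alpha$-integral into a \emph{near} piece $|\alpha|\leq 2^{-k_1}$ and \emph{dyadic} slices $|\alpha|\sim 2^l$ for $l\in[-k_1,0]$,
\begin{equation*}
\mathcal{T}_k^{H,1}=\mathcal{T}_k^{H,1,\leq}+\sum_{l\in[-k_1,0]}\mathcal{T}_{k,l}^{H,1,\sim}.
\end{equation*}
The key conversion is Bernstein's inequality $\|P_{k_0}H\|_{L^\infty}\lesssim 2^{k_0/2}\|P_{k_0}H\|_{L^2}$, which ultimately produces the prefactor $2^{k_0/2}$ in \eqref{nonlin10} whenever we need to pass $H$ from $L^\infty$ to $L^2$.

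For the near piece, a naive absolute-value estimate produces $\int 1/|\alpha|\,d\alpha$, which diverges, so cancellation is essential. I decompose $P_{k_0}H(s-\alpha)=P_{k_0}H(s)+[P_{k_0}H(s-\alpha)-P_{k_0}H(s)]$. On the first summand, applying Lemma \ref{lem_est3} to the $y_1$-integral against $L_{k_1}$ together with Lemma \ref{lem_est1} on the low-frequency $y_j$-integrals ($j\geq 2$) extracts a principal term whose $\alpha$-integrand is odd, exactly as in the treatment of $\mathcal{T}_k^{L,\leq,1}$, so its principal-value integral over $[-2^{-k_1},2^{-k_1}]$ vanishes, while the remainders carry enough powers of $|\alpha|$ to be integrated absolutely; pulling $P_{k_0}H(s)$ out of $L^2_s$ via Bernstein then yields the claimed $2^{k_0/2}\|P_{k_0}H\|_{L^2}$. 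For the difference summand, Lemma \ref{lem_est2} provides $\|P_{k_0}H(s-\alpha)-P_{k_0}H(s)\|_{L^2}\lesssim 2^{k_0}|\alpha|\|P_{k_0}H\|_{L^2}$, and the $|\alpha|$-gain defeats the $1/|\alpha|$ singularity, so a direct estimate using \eqref{Ln_psi} on the $g_1$-integral and Lemma \ref{lem_est0} on the $g_j$-integrals ($j\geq 2$) closes this case.

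In the dyadic slice $|\alpha|\sim 2^l$, I use \eqref{Ln_psi} together with identity \eqref{identity1} to rewrite the $y_1$-integral as $e^{-i\alpha/2}(P_{k_1}g_1(s)-P_{k_1}g_1(s-\alpha))/(2\sin(\alpha/2))$, which I bound in $L^\infty_s$ by $2^{-k_1/2}\|P_{k_1}g_1'\|_{L^2}/|\sin(\alpha/2)|$ via Bernstein (here $|\alpha|\geq 2^{-k_1}$, so controlling $\|P_{k_1}g_1\|_{L^\infty}$ directly is the binding estimate). Lemma \ref{lem_est0} controls each low-frequency $y_j$-integral uniformly in $\alpha$, and Minkowski's inequality together with translation-invariance $\|P_{k_0}H(\cdot-\alpha)\|_{L^2}=\|P_{k_0}H\|_{L^2}$ keeps $H$ in $L^2$. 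Each slice then contributes $\lesssim 2^{-l}\cdot 2^{-k_1/2}\|P_{k_1}g_1'\|_{L^2}\|P_{k_0}H\|_{L^2}\prod_{i=2}^m\|g_i'\|_{L^\infty}$, and summing the geometric series $\sum_{l=-k_1}^0 2^{-l}\sim 2^{k_1}$ produces $2^{k_1/2}\sim 2^{k_0/2}$, matching \eqref{nonlin10}.

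The principal obstacle is the near region: the oddness cancellation and the balance between Bernstein and Lemma \ref{lem_est2} must be orchestrated carefully, since neither a bare absolute-value bound (which diverges logarithmically) nor placing $H$ in $L^\infty$ from the outset (which would give the wrong scaling in $k_0$) succeeds on its own. Once these near-region estimates are in place, the dyadic region is controlled by standard kernel bookkeeping and geometric summation.
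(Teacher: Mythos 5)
Your proposal is correct and follows essentially the same route as the paper: the same split into a near region $|\alpha|\lesssim 2^{-k_1}$ (handled via the decompositions of Lemmas~\ref{lem_est1}--\ref{lem_est3} and the oddness of the $1/\alpha$ principal part) and dyadic slices $|\alpha|\sim 2^l$ (handled via \eqref{Ln_psi}, \eqref{identity1}, Lemma~\ref{lem_est0}, and a Bernstein conversion producing $2^{k_0/2}$). The only cosmetic difference is in the dyadic slices, where you place $g_1$ in $L^\infty$ (Bernstein on $g_1$) and $H$ in $L^2$, whereas the paper does the opposite; since $|k_0-k_1|\leq 6$ the two bookkeepings are equivalent.
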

	\begin{proof}
		In this proof, $k_0$ denotes the frequency of the function $H$ and $k_1$ the frequency of the highest  frequency function $g_1$. 
		Similarly as before, we begin by splitting $$\mathcal{T}_k^{H,1}=\mathcal{T}_k^{H,\leq,1}+\sum_{l\in[-k_0,0]}\mathcal{T}_{k,l}^{H,\sim,1}$$ 
		with
		\begin{equation}\label{nonlin5}
		\begin{split}
		\mathcal{T}_k^{H,\leq,1}(P_{k_1}g_1,&P_{\leq k}g_2\cdots,P_{\leq k}g_m,P_{k_{m+1}}f_1,\cdots,P_{k_{m+n}}f_n)(s)\\
  &:=\sum_{\substack{k_0,k_1\geq k-3 \\ |k_0-k_1|\leq6}}\frac{e^{is}}{(2\pi)^m}\int_{|\alpha|\leq 2^{-k_0}}\int_{\T^m}\frac{e^{-i\alpha/2}}{2\sin(\alpha/2)} A(\alpha)P_{ k_0}H(s-\alpha) \\
  &\qquad\times  P_{ k_1}g_1(y_1)L_{k_1}(s-y_1,\alpha)\cdots P_{\leq k}g_m(y_m)L_{\leq k}(s-y_m,\alpha) dy_1\cdots dy_md\alpha.
		\end{split}
		\end{equation}
		and
		\begin{equation}\label{nonlin6}
		\begin{split}
		\mathcal{T}_{k,l}^{H,\sim,1}(P_{k_1}g_1,&P_{\leq k}g_2\cdots,P_{\leq k}g_m,P_{k_{m+1}}f_1,\cdots,P_{k_{m+n}}f_n)(s)\\
  &:=\sum_{\substack{k_0,k_1\geq k-3 \\ |k_0-k_1|\leq6}}\frac{e^{is}}{(2\pi)^m}\int_{|\alpha|\sim2^l}\int_{\T^m}\frac{e^{-i\alpha/2}}{2\sin(\alpha/2)} A(\alpha)P_{ k_0}H(s-\alpha)\\
  &\qquad\times   P_{ k_1}g_1(y_1)L_{ k}(s-y_1,\alpha)\cdots P_{\leq k}g_m(y_m)L_{\leq k}(s-y_m,\alpha) dy_1\cdots dy_md\alpha.
		\end{split}
		\end{equation}
		We begin by studying $\mathcal{T}_k^{H,\leq,1}$. Proceeding as for $\mathcal{T}_k^{L,\leq,1}$ and $\mathcal{T}_k^{L,\leq,2}$ in \eqref{Tklleq1} and \eqref{Tklleq2}, we apply Lemmas~\ref{lem_est1}-\ref{lem_est3} to use the oddness in $\alpha$ and estimate
		\begin{equation*}
		\begin{split}
		\|\mathcal{T}_k^{H,\leq,1}\|_{L^2}&\lesssim\sum_{\substack{k_0,k_1\geq k-3 \\ |k_0-k_1|\leq6}}\|P_{ k_0}H(s)\|_{L^\infty}\|P_{k_1}g_1'(s)\|_{L^2}\prod_{i=2}^m\|g_i'(s)\|_{L^\infty}\\
		&\qquad\times\int_{|\alpha|\leq 2^{-k_0}}\Big(\frac{2^{k_0}|\alpha|^3}{8|\sin^3(\alpha/2)|}+\frac{2^{k_1}|\alpha|^3}{8|\sin^3(\alpha/2)|}+\frac{2^{k}|\alpha|^2}{4|\sin^2(\alpha/2)|}\Big)d\alpha\\
		&\lesssim\sum_{\substack{k_0,k_1\geq k-3 \\ |k_0-k_1|\leq6}}2^{k_0/2}\|P_{ k_0}H(s)\|_{L^2}\|P_{k_1}g_1'(s)\|_{L^2}\prod_{i=2}^m\|g_i'\|_{L^{\infty}}.
		\end{split}
		\end{equation*} 
		Finally, for $\mathcal{T}_{k,l}^{H,\sim,1}$, we split the highest frequency function $g_1$ according to \eqref{nonlin3} and then use Lemmas~\ref{lem:kernel_1} and \ref{lem_est0} to obtain
		\begin{equation*}
		\begin{split}
		\sum_{l\in[-k_0,0]}\|\mathcal{T}_{k,l}^{H,\sim,1}\|_{L^2}&\lesssim\sum_{\substack{k_0,k_1\geq k-3 \\ |k_0-k_1|\leq6}}\sum_{l\in[-k_0,0]}\|P_{k_0 }H\|_{L^\infty}\|P_{k_1}g_1\|_{L^2}\prod_{i=2}^m\|P_{\leq k}g_i'\|_{L^\infty} \int_{|\alpha|\sim2^l}\frac{d\alpha}{4|\sin^2(\alpha/2)|}
		\\&\lesssim\sum_{\substack{k_0,k_1\geq k-3 \\ |k_0-k_1|\leq6}}\sum_{l\in[-k_0,0]}2^{k_0/2}2^{-l-k_1}\|P_{k_0 }H\|_{L^2}\|P_{k_1}g_1'\|_{L^2}\prod_{i=2}^m\|g_i'\|_{L^\infty}\\
		&\lesssim\sum_{\substack{k_0,k_1\geq k-3 \\ |k_0-k_1|\leq6}}2^{k_0/2}\|P_kg_1'\|_{L^2}\|P_{k_0 }H\|_{L^2}\prod_{i=2}^m\|g_i'\|_{L^\infty}
		\end{split}
		\end{equation*}
		thus concluding the proof of the lemma.
  
	\end{proof}
	The last case to consider is $\mathcal{T}_{k}^{H,2}$, the scenario in which $H$ is the only high frequency function. We obtain the following result.
	\begin{lemma}\label{lemTH2}
		Assume that $f_1,\cdots,f_n,g_1,\cdots,g_m\in Z_1$. Then for $k\in\mathbb{N}$ and $t\in(0,\infty)$, we have
		\begin{equation}\label{nonlin11}
		\begin{split}
		\|P_k\mathcal{T}^{H,2}\|_{L^2}&\lesssim\|P_{k}H\|_{L^2}\prod_{i=1}^m\|g_i'\|_{L^{\infty}}\\&\lesssim2^{-k/2}(1+2^kt)^{-2/3}\prod_{i=1}^m\|g_i\|_{Z_1}\prod_{j=1}^n\|f_i\|_{Z_1}.
		\end{split}
		\end{equation}
	\end{lemma}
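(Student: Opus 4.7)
Following the dyadic splitting of Lemma~\ref{lem:nonlin1} and Lemma~\ref{lemTH1}, I would decompose $\mathcal{T}_k^{H,2}=\mathcal{T}_k^{H,\leq,2}+\sum_{l\in[-k,0]}\mathcal{T}_{k,l}^{H,\sim,2}$ according to whether $|\alpha|\leq 2^{-k}$ or $|\alpha|\sim 2^l$, and bound each piece using the estimates of Section~\ref{sect:estimates}.

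For the small-$\alpha$ piece $\mathcal{T}_k^{H,\leq,2}$, I would apply Lemma~\ref{lem_est1} to each low-frequency factor to extract its $\alpha\to 0$ limit $\frac{e^{-i\alpha/2}\alpha}{2\sin(\alpha/2)}P_{\leq k}g_j'(s)$ up to an error of size $2^k|\alpha|\|g_j'\|_{L^\infty}$, and use Lemma~\ref{lem_est2} to replace $P_kH(s-\alpha)$ by $P_kH(s)$ in $L^2_s$ up to an error of size $2^k|\alpha|\|P_kH\|_{L^2}$. The resulting principal term is odd in $\alpha$ modulo smooth $O(\alpha)$ corrections, so it cancels over the symmetric interval $\{|\alpha|\leq 2^{-k}\}$; the various remainders yield, after Minkowski's inequality in $\alpha$, a bound $\lesssim \|P_kH\|_{L^2}\prod\|g_j'\|_{L^\infty}$.

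For the medium-$\alpha$ piece $\mathcal{T}_{k,l}^{H,\sim,2}$, I would use the closed-form identity $\int_{\T}P_{\leq k}g_j(y_j)L_{\leq k}(s-y_j,\alpha)dy_j = \frac{e^{-i\alpha/2}[P_{\leq k}g_j(s)-P_{\leq k}g_j(s-\alpha)]}{2\sin(\alpha/2)}$, whose modulus is controlled by $\|g_j'\|_{L^\infty}|\alpha|/|\sin(\alpha/2)|$. Combined with the composite-kernel estimate $|\alpha|^m/|\sin(\alpha/2)|^{m+1}\lesssim 1/|\alpha|$, Minkowski's inequality yields $\|\mathcal{T}_{k,l}^{H,\sim,2}\|_{L^2}\lesssim\|P_kH\|_{L^2}\prod\|g_j'\|_{L^\infty}$ for each dyadic $l$.

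The main obstacle is that a naive summation over $l\in[-k,0]$ of the medium-$\alpha$ contributions produces an extra factor of $k$. To avoid this logarithmic loss, I would reassemble the dyadic sum into a single integral over $\{2^{-k}\leq|\alpha|\leq 1\}$ and extract the Hilbert-transform structure by writing $G(s,\alpha):=\prod_j\frac{e^{-i\alpha/2}[P_{\leq k}g_j(s)-P_{\leq k}g_j(s-\alpha)]}{2\sin(\alpha/2)}=\prod_j P_{\leq k}g_j'(s)+R(s,\alpha)$ with $|R(s,\alpha)|\lesssim\min\{2^k|\alpha|,1\}\prod\|g_j'\|_{L^\infty}$. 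The leading piece combined with $\frac{e^{-i\alpha/2}A(\alpha)}{2\sin(\alpha/2)}P_kH(s-\alpha)$ produces a Hilbert-type transform of $P_kH$ whose Fourier multiplier is computed in Lemma~\ref{introLemma}, hence $L^2$-bounded by $\|P_kH\|_{L^2}$; the remainder is controlled by using the vanishing of $R$ at $\alpha=0$ to compensate for the $1/|\alpha|$ singularity of the Hilbert kernel. The second inequality in~\eqref{nonlin11} then follows from the Remark after Lemma~\ref{lem:kernel1}, which allows $H = f_1'\cdots f_n'$ to be treated as a single copy of $f'$ and converts $\|P_kH\|_{L^2}$ into $2^{-k/2}(1+2^kt)^{-2/3}\prod_j\|f_j\|_{Z_1}$.
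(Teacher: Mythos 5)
Your small-$\alpha$ estimate of $\mathcal{T}_k^{H,\leq,2}$ matches the paper's argument (extract the $\alpha\to 0$ limit via Lemma~\ref{lem_est1}, replace $P_kH(s-\alpha)$ by $P_kH(s)$ via Lemma~\ref{lem_est2}, and use the oddness of the principal term), and the reduction of $H=f_1'\cdots f_n'$ to a single $f'$ via Lemma~\ref{lem:kernel1} and the final conversion to $Z_1$-norms are also as in the paper. The gap is in the medium-$\alpha$ regime.

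Your decomposition $G(s,\alpha)=\prod_j P_{\leq k}g_j'(s)+R(s,\alpha)$ with $|R(s,\alpha)|\lesssim\min\{2^k|\alpha|,1\}\prod\|g_j'\|_{L^\infty}$ does not avoid the logarithmic loss: on the set $\{2^{-k}\leq|\alpha|\leq 1\}$ the bound on $R$ is just $\lesssim 1$, so Minkowski gives
\begin{equation*}
\Big\|\int_{2^{-k}\leq|\alpha|\leq 1}\frac{e^{-i\alpha/2}A(\alpha)}{2\sin(\alpha/2)}P_kH(s-\alpha)R(s,\alpha)\,d\alpha\Big\|_{L^2_s}
\lesssim\|P_kH\|_{L^2}\prod_j\|g_j'\|_{L^\infty}\int_{2^{-k}}^1\frac{d\alpha}{\alpha}\sim k\,\|P_kH\|_{L^2}\prod_j\|g_j'\|_{L^\infty},
\end{equation*}
exactly the factor of $k$ you were trying to remove. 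The vanishing of $R$ at $\alpha=0$ only bites for $|\alpha|\lesssim 2^{-k}$, which is the region you already handled; on the medium dyadic shells $R$ carries no gain, because each factor $\frac{e^{-i\alpha/2}[P_{\leq k}g_j(s)-P_{\leq k}g_j(s-\alpha)]}{2\sin(\alpha/2)}$ and its limit $P_{\leq k}g_j'(s)$ are both $O(\|g_j'\|_{L^\infty})$ with no cancellation when $2^k|\alpha|\gg 1$. Isolating a clean truncated Hilbert transform of $P_kH$ is fine for the leading term, but it does not help the remainder.

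The paper instead uses a genuinely different idea for $\mathcal{T}_{k,l}^{H,\sim,2}$: write $H(s-\alpha)=-\frac{d}{d\alpha}G(s-\alpha)$ with $G'=H$ (the antiderivative), and integrate by parts in $\alpha$ on each dyadic shell (inserting the smooth cutoff $\varphi_0(\alpha 2^{-l})+(1-\varphi_0(\alpha 2^{-l}))$ to avoid boundary terms). The $\alpha$-derivative then falls either on the kernel $\frac{e^{-i\alpha/2}A(\alpha)}{2\sin(\alpha/2)}$, costing a factor $2^{-l}$ and estimated by Lemma~\ref{lem_est5}, or on the product of $g$-factors, also controlled by Lemma~\ref{lem_est5}. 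The crucial gain is $\|P_kG\|_{L^2}\lesssim 2^{-k}\|P_kH\|_{L^2}$, so each shell contributes $\lesssim 2^{-k-l}\|P_kH\|_{L^2}\prod\|g_i'\|_{L^\infty}$, and $\sum_{l\in[-k,0]}2^{-k-l}\lesssim 1$. Without this integration by parts in $\alpha$ (or an equivalent mechanism that trades a derivative on $H$ against the $\alpha$-singularity), your argument does not close.
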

	
	\begin{proof}
		We begin by splitting $\mathcal{T}_{k}^{H,2}=\mathcal{T}_{k}^{H,\leq,2}+\sum_{l\in[-k,0]}\mathcal{T}_{k,l}^{H,\sim,2}$ where here
		\begin{equation*}
		\begin{split}
		\mathcal{T}_{k}^{H,\leq,2}(P_{k_1}f_1,\cdots P_{k_n}f_{n}&,P_{\leq k}g_1,\cdots,P_{\leq k}g_m)=\frac{e^{is}}{(2\pi)^m}\int_{|\alpha|\leq 2^{-k}}\int_{\T^m}\frac{e^{-i\alpha/2}}{2\sin(\alpha/2)} A(\alpha) P_kH(s-\alpha)\\&\times P_{\leq k}g_1(y_1)L_{\leq k}(s-y_1,\alpha)\cdots P_{\leq k}g_m(y_m)L_{\leq k}(s-y_m,\alpha)dy_1\cdots dy_md\alpha,
		\end{split}
		\end{equation*}
		and
		\begin{equation*}
		\begin{split}
		\mathcal{T}_{k,l}^{H,\sim,2}(P_{k_1}f_1,\cdots P_{k_n}f_{n}&,P_{\leq k}g_1,\cdots,P_{\leq k}g_m)=\frac{e^{is}}{(2\pi)^m}\int_{|\alpha|\sim 2^{l}}\int_{\T^m}\frac{e^{-i\alpha/2}}{2\sin(\alpha/2)} A(\alpha) P_kH(s-\alpha)\\&\times P_{\leq k}g_1(y_1)L_{\leq k}(s-y_1,\alpha)\cdots P_{\leq k}g_m(y_m)L_{\leq k}(s-y_m,\alpha)dy_1\cdots dy_md\alpha.
		\end{split}
		\end{equation*}
The first term, $\mathcal{T}_k^{H,\leq,2}$ is estimated using Lemmas \ref{lem_est1} and \ref{lem_est2}. We obtain
		\begin{equation*}
		\begin{split}
		\|\mathcal{T}_{k}^{H,\leq,2}\|_{L^2}&\lesssim \|P_k H(s)\|_{L^2}\prod_{i=1}^m\|g_i'\|_{L^\infty}\int_{|\alpha|\leq 2^{-k}}\frac{2^k|\alpha|}{2|\sin{(\alpha/2)}|}d\alpha,
		\end{split}
		\end{equation*}
  where the leading term
  		\begin{equation*}
		\begin{split}
\frac{e^{is}}{(2\pi)^m}P_k H(s)\prod_{i=1}^m P_{\leq k}g_i'(s)\int_{|\alpha|\leq 2^{-k}}A(\alpha)\frac{e^{-i\alpha/2}}{2\sin{(\alpha/2)}}\Big(\frac{e^{-i\alpha/2}\alpha}{2\sin{(\alpha/2)}}\Big)^{m}d\alpha
		\end{split}
		\end{equation*}
  has been controlled once more by the oddness of the kernel.

		We now study $\mathcal{T}_{k,l}^{H,\sim,2}$. We define
		\begin{equation*}
		G'(s-\alpha):=H(s-\alpha),
		\end{equation*}  
		and rewrite
		\begin{equation*}
		\begin{split}
		\mathcal{T}_{k,l}^{H,\sim,2}(s)&=\frac{e^{is}}{(2\pi)^m}\int_{|\alpha|\sim 2^{l}}\frac{e^{-i\alpha/2}}{2\sin(\alpha/2)} A(\alpha) P_kG'(s-\alpha) \prod_{i=1}^m\int_{\T}P_{\leq k}g_i(y_i)L_{\leq k}(s-y_i,\alpha)dy_id\alpha.
		\end{split}
		\end{equation*}
		A simple change of variable enables us to perceive the derivative as a derivative in $\alpha$ and we can hence perform an integration by parts in $\alpha$ which, without loss of generality, reduces to estimating a term of the following form
		\begin{equation*}
		\begin{split}
		\widetilde{\mathcal{T}}_{k,l}^{H,\sim,2}(s)&=\frac{e^{is}}{(2\pi)^m}\int_{|\alpha|\sim 2^{l}}\bigg[\frac{d}{d\alpha}\bigg(\frac{e^{-i\alpha/2}A(\alpha)}{2\sin(\alpha/2)}\big[\varphi_0(\alpha2^{-l})+(1-\varphi_0(\alpha2^{-l}))\big] \bigg) P_kG(s-\alpha)\\
		&\qquad\times\prod_{i=1}^m\int_{\T}P_{\leq k}g_i(y_i)L_{\leq k}(s-y_i,\alpha)dy_i\bigg]d\alpha\\
		&+\frac{e^{is}}{(2\pi)^m}\int_{|\alpha|\sim 2^{l}}\bigg[\frac{e^{-i\alpha/2}}{2\sin(\alpha/2)} A(\alpha) P_kG(s-\alpha) \prod_{i=2}^m\int_{\T}P_{\leq k}g_i(y_i)L_{\leq k}(s-y_i,\alpha)dy_i\\
		&\qquad\times\frac{d}{d\alpha}\bigg(\int_{\T}P_{\leq k}g_1(y_1)L_{\leq k}(s-y_1,\alpha)dy_1\big[\varphi_0(\alpha2^{-l})+(1-\varphi_0(\alpha2^{-l}))\big]\bigg)\bigg]d\alpha.
		\end{split}
		\end{equation*}
		Here, the additional factor of $\big[\varphi_0(\alpha2^{-l})+(1-\varphi_0(\alpha2^{-l}))\big]$ is simply to ensure that we do not have any boundary terms. Using Lemmas~\ref{lem_est0} and \eqref{lem_est5}, we estimate
		\begin{equation*}
		\begin{split}
		\sum_{l\in[-k,0]}\|\widetilde{\mathcal{T}}_{k,l}^{H,\sim,2}\|_{L^2}&\lesssim\sum_{l\in[-k,0]}\|P_k G\|_{L^2}\prod_{i=1}^m\|P_{\leq k}g_i'\|_{L^\infty}\int_{|\alpha|\sim 2^{l}}\frac{d\alpha}{4|\sin^2{(\alpha/2)}|}\\
		&\lesssim\sum_{l\in[-k,0]}2^{-k-l}\|P_kH\|_{L^2}\prod_{i=1}^m\|g_i'\|_{L^\infty}\\
		&\lesssim\|P_kH\|_{L^2}\prod_{i=1}^m\|g_i'\|_{L^\infty}.
		\end{split}
		\end{equation*}
		Applying the definition of the $Z_1$-norm and Lemma~\ref{lem:kernel_1} concludes the proof of the lemma.
  
	\end{proof}
	Combining the two previous results with \eqref{nonlin7}, we get:
	\begin{lemma}\label{lem:nonlin2}
		Assume that $f_1,\cdots,f_n,g_1,\cdots,g_m\in Z_1$. Then for $k\in\mathbb{N}$ and $t\in(0,\infty)$, we have
		\begin{equation}
		\|P_k\mathcal{T}^{H}(t)\|_{L^2}\lesssim2^{-k/2}(1+2^kt)^{-2/3}\prod_{i=1}^m\|g_i\|_{Z_1}\prod_{j=1}^n\|f_j\|_{Z_1}.
		\end{equation}
	\end{lemma}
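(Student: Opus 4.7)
The plan is to combine the three pieces in the splitting $\mathcal{T}_k^H = \mathcal{T}_k^{H,1} + \mathcal{T}_k^{H,2} + \mathcal{T}_k^{L,1}$ introduced above Lemma~\ref{lemTH1}. The bound \eqref{nonlin11} for $\mathcal{T}_k^{H,2}$ and the bound \eqref{nonlin7} for $\mathcal{T}_k^{L,1}$ are already stated in precisely the form claimed in the lemma, so the only remaining task is to convert the $L^2$-bound \eqref{nonlin10} for $\mathcal{T}_k^{H,1}$ into the same dissipative form.

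Starting from \eqref{nonlin10}, I would substitute the two elementary consequences of the $Z_1$-norm definition \eqref{norm2},
\begin{equation*}
\|P_{k_1}g_1'\|_{L^2} \lesssim 2^{-k_1/2}(1+2^{k_1}t)^{-2/3}\|g_1\|_{Z_1}, \qquad \|g_i'\|_{L^\infty} \lesssim \|g_i\|_{Z_1},
\end{equation*}
together with the key bound
\begin{equation*}
\|P_{k_0}H\|_{L^2} \lesssim 2^{-k_0/2}(1+2^{k_0}t)^{-2/3}\prod_{j=1}^n \|f_j\|_{Z_1}
\end{equation*}
for $H = f_1'\cdots f_n'$. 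This last bound would be obtained by iterating the algebra estimate \eqref{norm4}--\eqref{norm5} of Lemma~\ref{lem:kernel1}, as signaled in the remark following that lemma: the product of several $f_j'$ factors obeys exactly the same $L^2$ decay at dyadic frequencies as a single copy of $f'$. This is arguably the central point, since without it the factor $2^{k_0/2}$ in \eqref{nonlin10} could not be absorbed.

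Inserting these bounds into \eqref{nonlin10}, the growth $2^{k_0/2}$ cancels the gain $2^{-k_0/2}$ coming from the estimate on $\|P_{k_0}H\|_{L^2}$, reducing the problem to controlling
\begin{equation*}
\sum_{\substack{k_1,k_0 \geq k-3 \\ |k_1-k_0|\leq 6}} 2^{-k_1/2}(1+2^{k_1}t)^{-2/3}(1+2^{k_0}t)^{-2/3} \lesssim \sum_{k_1 \geq k-3} 2^{-k_1/2}(1+2^{k_1}t)^{-4/3},
\end{equation*}
where I used $k_0 \sim k_1$. I would then bound this geometric sum by $2^{-k/2}(1+2^kt)^{-2/3}$ through the usual case distinction $2^k t \leq 1$ versus $2^k t \geq 1$: in the first regime the sum is dominated by the lowest-frequency terms $k_1 \sim k$, contributing $\lesssim 2^{-k/2}$; in the second regime, the stronger exponent $-4/3$ provides the extra factor $(2^{k_1}t)^{-2/3}$ required to absorb the loss $t^{-2/3}$ and produce exactly the decay $(1+2^kt)^{-2/3}$. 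The main (albeit minor) obstacle is precisely this final summation, since one must exploit the doubled temporal decay to upgrade the trivial geometric estimate to the parabolic-type bound; everything else is bookkeeping. Combining the resulting estimate for $\mathcal{T}_k^{H,1}$ with \eqref{nonlin11} and \eqref{nonlin7} yields the stated bound on $\|P_k \mathcal{T}^H(t)\|_{L^2}$.
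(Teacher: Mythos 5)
Your proposal is correct and matches the paper's argument: combine the three pieces $\mathcal{T}_k^{H,1}$, $\mathcal{T}_k^{H,2}$, $\mathcal{T}_k^{L,1}$ using \eqref{nonlin10}, \eqref{nonlin11}, \eqref{nonlin7}, convert \eqref{nonlin10} by inserting the $Z_1$-norm bounds plus the algebra property from Lemma~\ref{lem:kernel1} to control $\|P_{k_0}H\|_{L^2}$, and then close the dyadic sum over $k_0,k_1\geq k-3$. The only cosmetic difference is that the paper introduces an antiderivative $G$ with $G'=H$ and argues through $\|G\|_{Z_1}\lesssim\prod_j\|f_j\|_{Z_1}$, whereas you bound $\|P_{k_0}H\|_{L^2}$ directly, which is an equivalent (and slightly cleaner) way of exploiting the same algebra estimate.
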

	\begin{proof}
		Using the definition of the $Z_1$-norm and Lemma~\ref{lem:kernel_1}, we see that 
		\begin{equation}
		\begin{split} \|P_k\mathcal{T}^{H,1}\|_{L^2}&\lesssim\sum_{\substack{k_0,k_1\geq k-3 \\ |k_0-k_1|\leq6}}2^{k/2}2^{k_0}\|P_{k_0}G\|_{L^2}2^{k_1}\|P_{k_1}g_1\|_{L^2}\prod_{i=2}^m\|g_i'\|_{L^\infty}\\
		&\lesssim\sum_{\substack{k_0,k_1\geq k-3 \\ |k_0-k_1|\leq6}}2^{k}(1+2^kt)^{2/3}2^{-k_0}(1+2^{k_0}t)^{-4/3}\|P_{k_0}G\|_{Z_1}\|P_{k_1}g_1\|_{Z_1}\prod_{i=3}^m\|g_i\|_{Z_1}\prod_{j=1}^n\|f_j\|_{Z_1}\\
		&\lesssim2^{-k/2}(1+2^kt)^{-2/3}\prod_{i=1}^m\|g_i\|_{Z_1}\prod_{j=1}^n\|f_j\|_{Z_1}.
		\end{split}
		\end{equation}
		Combining this with \eqref{nonlin7} and \eqref{nonlin11} concludes the proof of the lemma.
  
	\end{proof}

We are now ready to conclude the estimate of the full nonlinearity, and hence to close the fixed point argument. We denote by $e^{t\Lambda}$ the semigroup which in Fourier variables corresponds to $e^{t\mathcal{G}}$, defined in \eqref{eq1}. 
	\begin{lemma}\label{nonlin_lem}
		Assume that $f_1,\cdots,f_n,g_1,\cdots,g_m\in Z_1$. Then for $k\in\mathbb{N}$ and $t\in(0,\infty)$, we have
		\begin{equation}\label{aux1}
		\|P_k\mathcal{T}(t)\|_{L^2}\lesssim 2^{-k/2}(1+2^kt)^{-2/3}\prod_{i=1}^m\|g_i\|_{Z_1}\prod_{j=1}^n\|f_j\|_{Z_1}.
		\end{equation}
  Moreover, 
\begin{equation*}
    \begin{aligned}
        \|Y-e^{t\Lambda}Y_0\|_{Z_2}\lesssim \prod_{i=1}^m\|g_i\|_{Z_1}\prod_{j=1}^n\|f_j\|_{Z_1}.
    \end{aligned}
\end{equation*}  
	\end{lemma}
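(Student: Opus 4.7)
The first claim \eqref{aux1} is immediate: writing $\mathcal{T} = \mathcal{T}^L + \mathcal{T}^H$ as in the preceding decomposition and invoking Lemma~\ref{lem:nonlin1} for $\mathcal{T}^L$ and Lemma~\ref{lem:nonlin2} for $\mathcal{T}^H$ produces the bound with no additional work.

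For the $Z_2$-bound on $Y-e^{t\Lambda}Y_0$, the plan is to combine Duhamel's formula
\begin{equation*}
Y(t) - e^{t\Lambda}Y_0 = \int_0^t e^{(t-\tau)\Lambda}\mathcal{T}(\tau)\,d\tau
\end{equation*}
with the dissipation of $e^{t\Lambda}$. From Section~\ref{sect:linear}, the eigenvalues $\lambda_1 = 2(A_0+\widetilde{B}_0)(m-1)$ and $\lambda_2 = 2A_0(m-1)$ of \eqref{freq17} are positive by \eqref{cond2}; since $\mathcal{G}$ is Hermitian, this yields a block-diagonal semigroup estimate
\begin{equation*}
\|P_k\,e^{t\Lambda}f\|_{L^2}\,\lesssim\,e^{-c\,2^k t}\|P_kf\|_{L^2},\qquad k\in\Z_+,
\end{equation*}
for some $c=c(A_0,\widetilde{B}_0)>0$ (as already exploited in the Wiener-algebra computation \eqref{newB1}--\eqref{newB3}). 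Combining with \eqref{aux1} gives
\begin{equation*}
\|P_k(Y-e^{t\Lambda}Y_0)(t)\|_{L^2}\,\lesssim\,2^{-k/2}\Big(\prod_{i=1}^m\|g_i\|_{Z_1}\prod_{j=1}^n\|f_j\|_{Z_1}\Big)\int_0^t e^{-c 2^k(t-\tau)}(1+2^k\tau)^{-2/3}\,d\tau.
\end{equation*}

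Setting $T = 2^k t$ and $u = 2^k\tau$, the $Z_2$-weight rescales to
\begin{equation*}
\begin{split}
(2^kt)^{-1/3}(1+2^kt)\cdot 2^{3k/2}\|P_k(Y-&e^{t\Lambda}Y_0)\|_{L^2}\\
&\lesssim T^{-1/3}(1+T)\int_0^T e^{-c(T-u)}(1+u)^{-2/3}du\cdot\prod_{i=1}^m\|g_i\|_{Z_1}\prod_{j=1}^n\|f_j\|_{Z_1}.
\end{split}
\end{equation*}
A case split bounds the right-hand side uniformly in $T>0$: for $T\le 1$ the integral is $\lesssim T$ and the prefactor is $\lesssim T^{-1/3}$, giving $T^{2/3}$; for $T\ge 1$ we split the integral at $T/2$, obtaining an exponentially small contribution from $[0,T/2]$ and, bounding $(1+u)^{-2/3}\lesssim T^{-2/3}$ on $[T/2,T]$, a contribution $\lesssim T^{-2/3}$ which pairs with the prefactor $\sim T^{2/3}$ to give $\lesssim 1$. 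Taking $\sup_{k,t}$ concludes the argument.

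The principal obstacle is not this heat-type convolution but rather the bookkeeping around the spectrum of $e^{t\Lambda}$: the true semigroup is generated by a time-dependent operator (the coefficients $A(t),B(t)$) which also couples $P_k$ and $P_{2-k}$, so one must first freeze $A,B$ at $A_0,\widetilde{B}_0$ via \eqref{first_freq}--\eqref{AA0BB0} (absorbing the perturbation into the modified source $\widetilde{\mathcal{L}}_k$ as in Section~\ref{sect:duhamel}) and then diagonalize the Hermitian $2\times 2$ block on each mode $m\ge 3$ to obtain the uniform exponential decay used above. Once this reduction is carried out, the parabolic Duhamel estimate is essentially the same as the Wiener-algebra computation \eqref{newB1}--\eqref{newB3}, transferred from the Wiener scale to the $Z_1$--$Z_2$ scale.
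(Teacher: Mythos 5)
Your proof is correct and takes essentially the same approach as the paper: \eqref{aux1} follows by combining Lemmas~\ref{lem:nonlin1} and \ref{lem:nonlin2}, and the $Z_2$ bound follows from Duhamel's formula with the frozen-coefficient semigroup decay $e^{-c2^k(t-\tau)}$, exactly as the paper does via the bound $\|P_k(Y-e^{t\Lambda}Y_0)\|_{L^2}\lesssim\int_0^t e^{-C_02^k(t-\tau)}\|P_k\mathcal{T}(\tau)\|_{L^2}\,d\tau$. The paper leaves the final integral estimate $T^{-1/3}(1+T)\int_0^T e^{-c(T-u)}(1+u)^{-2/3}du\lesssim 1$ implicit; your $T\le 1$ versus $T\ge1$ case split supplies exactly the missing computation, and your remark on freezing $A(t),B(t)$ at $A_0,\widetilde{B}_0$ is the reduction already carried out in Section~\ref{sect:duhamel}.
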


\begin{proof}
The first part \eqref{aux1} is a direct consequence of combining Lemmas~\ref{lem:nonlin1} and \ref{lem:nonlin2}.
The second part follows from the definition of the $Z_2$ norm 
 \eqref{norm3} and the nonlinear estimate \eqref{aux1}. In fact, as in \eqref{eq1}, we have
\begin{equation*}
    \begin{aligned}
        \|P_k\big(Y-e^{t\Lambda}Y_0\big)\|_{L^2}\lesssim \int_0^t e^{-C_0 2^{k}(t-\tau)}\|P_k\mathcal{T}(\tau)\|_{L^2}d\tau.
    \end{aligned}
\end{equation*}
Therefore, 
\begin{equation*}
    \begin{aligned}
        \frac{(1+2^k t)2^{3k/2}}{(2^{k}t)^{1/3}}\|P_k\big(Y-e^{t\Lambda}Y_0\big)\|_{L^2}&\lesssim \frac{(1+2^k t)2^{k}}{(2^{k}t)^{1/3}}\int_0^t \frac{e^{-C_0 2^{k}(t-\tau)}}{(1+2^k\tau)^{2/3}}d\tau \prod_{i=1}^m\|g_i\|_{Z_1}\prod_{j=1}^n\|f_j\|_{Z_1}\\
        &\lesssim  \prod_{i=1}^m\|g_i\|_{Z_1}\prod_{j=1}^n\|f_j\|_{Z_1}.
    \end{aligned}
\end{equation*}

\end{proof}

\section{Acknowledgements}

EGJ was partially supported by the RYC2021-032877 research grant, the AEI projects PID2021-125021NA-I00, PID2020-114703GB-I00 and PID2022-140494NA-I00, and the AGAUR project 2021-SGR-0087. 

SVH is partially supported by the National Science Foundation through the award DMS-2102961. Moreover, part of this material is based upon work supported by the Swedish Research Council under grant no. 2021-06594 while SVH was in residence at Institut Mittag-Leffler in Djursholm, Sweden during the  fall semester of 2023.

\bibliographystyle{amsplain2link.bst}
\bibliography{references.bib}

\providecommand{\bysame}{\leavevmode\hbox to3em{\hrulefill}\thinspace}
\providecommand{\href}[2]{#2}
\begin{thebibliography}{10}
\expandafter\ifx\csname arxiv\endcsname\relax
  \def\arxiv#1{\burlalt{arXiv:#1}{http://arxiv.org/abs/#1}}\fi
\expandafter\ifx\csname doi\endcsname\relax
  \def\doi#1{\burlalt{doi:#1}{http://dx.doi.org/#1}}\fi
\expandafter\ifx\csname href\endcsname\relax
  \def\href#1#2{#2}\fi
\expandafter\ifx\csname burlalt\endcsname\relax
  \def\burlalt#1#2{\href{#2}{#1}}\fi

\bibitem{AlazardNguyen21}
Thomas Alazard and Quoc-Hung Nguyen, \emph{On the {C}auchy problem for the
  {M}uskat equation. {II}: {C}ritical initial data}, Ann. PDE \textbf{7}
  (2021), no.~1, Paper No. 7, 25, \doi{10.1007/s40818-021-00099-x}.

\bibitem{AlazardNguyen22}
\bysame, \emph{Endpoint {S}obolev theory for the {M}uskat equation}, Comm.
  Math. Phys. \textbf{397} (2023), no.~3, 1043--1102,
  \doi{10.1007/s00220-022-04514-7}.

\bibitem{BahouriCheminDanchin11}
Hajer Bahouri, Jean-Yves Chemin, and Rapha\"{e}l Danchin, \emph{Fourier
  analysis and nonlinear partial differential equations}, Grundlehren der
  mathematischen Wissenschaften [Fundamental Principles of Mathematical
  Sciences], vol. 343, Springer, Heidelberg, 2011,
  \doi{10.1007/978-3-642-16830-7}.

\bibitem{CameronStrain23}
Stephen Cameron and Robert~M. Strain, \emph{Critical local well-posedness for
  the fully nonlinear {P}eskin problem}, Communications on Pure and Applied
  Mathematics (2023), \doi{https://doi.org/10.1002/cpa.22139}.

\bibitem{ChenNguyen23}
Ke~Chen and Quoc-Hung Nguyen, \emph{The {P}eskin problem with $\dot{
  B}^1_{\infty,\infty}$ initial data}, SIAM Journal on Mathematical Analysis
  \textbf{55} (2023), no.~6, 6262--6304, \doi{10.1137/22M1510984}.

\bibitem{GancedoGarciaJuarezPatelStrain23}
F.~Gancedo, E.~Garc\'{\i}a-Ju\'{a}rez, N.~Patel, and R.~M. Strain, \emph{Global
  regularity for gravity unstable muskat bubbles}, Mem. Amer. Math. Soc.,
  arXiv:1902.02318 (2023), To appear.

\bibitem{GancedoGraneroScrobogna21}
Francisco Gancedo, Rafael Granero-Belinch\'{o}n, and Stefano Scrobogna,
  \emph{Global existence in the {L}ipschitz class for the {N}-{P}eskin
  problem}, Indiana Univ. Math. J. \textbf{72} (2023), no.~2, 553--602,
  \doi{10.1512/iumj.2023.72.9320}.

\bibitem{GJMoriStrain23}
Eduardo Garc\'{\i}a-Ju\'{a}rez, Yoichiro Mori, and Robert~M. Strain, \emph{The
  {P}eskin problem with viscosity contrast}, Anal. PDE \textbf{16} (2023),
  no.~3, 785--838, \doi{10.2140/apde.2023.16.785}.

\bibitem{GJGSHP23}
Eduardo García-Juárez, Javier Gómez-Serrano, Susanna~V. Haziot, and Benoît
  Pausader, \emph{Desingularization of small moving corners for the {M}uskat
  equation}, Preprint arXiv:2305.05046 (2023).

\bibitem{GJKuoMoriStrain23}
Eduardo García-Juárez, Po-Chun~Kuo Kuo, Yoichiro Mori, and Robert~M. Strain,
  \emph{Well-posedness of the 3{D} {P}eskin problem}, Preprint arXiv:2301.12153
  (2023).

\bibitem{Li21}
Hui Li, \emph{Stability of the {S}tokes immersed boundary problem with bending
  and stretching energy}, J. Funct. Anal. \textbf{281} (2021), no.~9, Paper No.
  109204, 65, \doi{10.1016/j.jfa.2021.109204}.

\bibitem{LinTong19}
Fang-Hua Lin and Jiajun Tong, \emph{Solvability of the {S}tokes immersed
  boundary problem in two dimensions}, Comm. Pure Appl. Math. \textbf{72}
  (2019), no.~1, 159--226, \doi{10.1002/cpa.21764}.

\bibitem{MoriRodenbergSpirn19}
Yoichiro Mori, Analise Rodenberg, and Daniel Spirn, \emph{Well-posedness and
  global behavior of the {P}eskin problem of an immersed elastic filament in
  {S}tokes flow}, Comm. Pure Appl. Math. \textbf{72} (2019), no.~5, 887--980,
  \doi{10.1002/cpa.21802}.

\bibitem{Peskin77}
Charles~S. Peskin, \emph{Numerical analysis of blood flow in the heart}, J.
  Comput. Phys. \textbf{25} (1977), no.~3, 220--252,
  \doi{10.1016/0021-9991(77)90100-0}.

\bibitem{Peskin02}
\bysame, \emph{The immersed boundary method}, Acta Numer. \textbf{11} (2002),
  479--517, \doi{10.1017/S0962492902000077}.

\bibitem{Rodenberg18}
Analise Rodenberg, \emph{2{D} {P}eskin {P}roblems of an {I}mmersed {E}lastic
  {F}ilament in {S}tokes {F}low}, ProQuest LLC, Ann Arbor, MI, 2018, Thesis
  (Ph.D.)--University of Minnesota.

\bibitem{Tong21}
Jiajun Tong, \emph{{{R}egularized {S}tokes immersed boundary problems in two
  dimensions: Well-posedness, singular limit, and error estimates}}, Comm. Pure
  Appl. Math. \textbf{74(2):366–449} (2021).

\bibitem{Tong22}
\bysame, \emph{{Global solutions to the tangential {P}eskin problem in 2-D}},
  Preprint arXiv:2205.14723 (2022).

\bibitem{TongWei23}
Jiajun Tong and Dondyu Wei, \emph{Geometric properties of the 2-{D} {P}eskin
  problem}, Preprint arXiv:2304.09556 (2023).

\end{thebibliography}

\end{document}